\newtheorem{thm}{Theorem}[section]
\numberwithin{equation}{section}
 \newtheorem{lem}[thm]{Lemma}        
 \newtheorem{prop}[thm]{Proposition}  
 \newtheorem{con}[thm]{Conjecture}  
 \theoremstyle{definition}
 \newtheorem{dfn}[thm]{Definition}
\newtheorem*{claim*}{Claim}
\newtheorem{rmk}[thm]{Remark}
\newtheorem{ex}[thm]{Example}
\newcommand\set[1]{\left\{#1\right\}}
\newcommand{\cT}{\mathcal{T}}
\newcommand{\cH}{\mathcal{H}}
\newcommand{\cG}{\mathcal{G}}
\newcommand{\cA}{\mathcal{A}}
\newcommand{\cR}{\mathcal{R}}
\newcommand{\cC}{\mathcal{C}}
\newcommand{\cD}{\mathcal{D}}
\newcommand{\cI}{\mathcal{I}}
\newcommand{\cU}{\mathcal{U}}
\newcommand{\cB}{\mathcal{B}}
\newcommand{\cW}{\mathcal{W}}
\newcommand{\cO}{\mathcal{O}}
\newcommand{\cM}{\mathcal{M}}
\newcommand{\cN}{\mathcal{N}}
\newcommand{\CC}{\mathbb{C}}
\newcommand{\RR}{\mathbb{R}}
\newcommand{\ZZ}{\mathbb{Z}}
\newcommand{\TT}{\mathbb{T}}
\newcommand{\bN}{\mathbb{N}}
\DeclareMathOperator{\Fr}{Fr}
\DeclareMathOperator{\Sym}{Sym}
\newcommand{\bk}{\mathbf{k}}
\newcommand{\row}{\mathrm{r}}
\newcommand{\rd}{{\mathrm d}}
\newcommand{\cMo}{\cM^0}
\newcommand{\Mon}{\bar{\cM}}
\newcommand{\unicov}[1]{\tilde{\cM}^0_{#1}}
\newcommand{\cunicov}[1]{\widehat{\cM}^0_{#1}}
\DeclareMathOperator{\Rat}{Rat}
\newcommand{\MB}{\mathrm{M}}
\newcommand{\FB}{\mathrm{B}}
\newcommand{\bT}{{}^{\bo}T}
\newcommand{\fT}{{}^{\Phi}T}
\newcommand{\wt}{\widetilde}
\newcommand{\wh}{\widehat}
\newcommand{\ol}{\overline}
\DeclareMathOperator{\bo}{b}
\DeclareMathOperator{\scat}{sc}
\newcommand{\ve}{\varepsilon}
\DeclareMathOperator{\Stab}{Stab}
\renewcommand{\Im}{\operatorname{Im}}
\DeclareMathOperator{\SU}{SU}
\DeclareMathOperator{\dist}{dist}
\DeclareMathOperator{\Vect}{Vect}
\DeclareMathOperator{\tr}{tr}
\newcommand{\abs}[1]{\left| #1 \right|}
\newcommand{\p}{\partial}
\newcommand{\pa}{\p}
\renewcommand{\geq}{\geqslant}
\renewcommand{\leq}{\leqslant}
\renewcommand{\ge}{\geqslant}
\renewcommand{\le}{\leqslant}
\title[Monopoles and the Sen Conjecture]{Monopoles and the Sen Conjecture}
\author{Karsten Fritzsch}
\email{k.fritzsch@math.uni-hannover.de}
\address{Leibniz University Hannover}
\author{Chris Kottke}
\email{ckottke@ncf.edu}
\address{New College of Florida}
\author{Michael Singer}
\email{michael.singer@ucl.ac.uk}
\address{Department of Mathematics, University College, London WC1E 6BT}
\begin{document}
\maketitle

\begin{abstract}
We describe compactifications of the moduli spaces of SU(2) monopoles on $\RR^3$ as manifolds 
with corners, with respect to which the hyperK\"ahler metrics admit asymptotic expansions up to each boundary face. 
The boundary faces encode monopoles of charge $k$ decomposing into widely separated monopoles of lower charge, and the leading order
asymptotic of the metric generalizes the one obtained by Gibbons, Manton and Bielawski in the case of complete decomposition into monopoles of unit charge.
From the structure of the compactifications, we prove part of Sen's conjecture for the $L^2$ cohomology of the strongly centered moduli spaces by adapting
an argument of Segal and Selby.
\end{abstract}
\setcounter{tocdepth}{2}
\tableofcontents

\newpage
\part{A Description of the Compactification}

\section{Introduction}\label{intro}

The moduli space $\cN_k$ of non-abelian magnetic monopoles of charge
$k$ (gauge group $\mathrm{SU}(2)$) has received much attention
from both mathematicians and mathematical physicists.  It is well known that for
each positive integer $k$, $\cN_k$ is a complete, non-compact hyperK\"ahler
manifold of dimension $4k$.   The question of the asymptotic behaviour
of the metric $g_k$, say, on $\cN_k$ has also been studied in various
special cases
\cite{bielawski1995,RB_GM,bielawski2008,B_future,gibbons1995,AH,KS}.
Apart from the intrinsic interest in understanding the asymptotic
behaviour of this metric completely, it is also essential for the
study of the $L^2$ harmonic forms on the monopole moduli spaces, which
are the subject of the Sen Conjecture \cite{sen1994strong,SS}.

In this paper we shall give a complete description of the asymptotic
behaviour of $g_k$, the complete proof of which will appear in Part II,
and we shall combine our results with an argument
of Segal and Selby \cite{SS} to prove the `coprime case' of the Sen
Conjecture.  This argument is quite `soft' and exploits some very
specific features of the asymptotic geometry of the monopole moduli
spaces.  In particular, detailed analysis of the Hodge-de Rham
operator $\rd + \rd^*$ on $\cN_k$ is not required.  It would seem that
such an analysis will, however, be needed to prove the other cases of the Sen
Conjecture; we hope to return to this in the near future.

%
%

In order to state the Sen Conjecture, recall first the definition of the {\em strongly
  centred space} $\unicov{k}$ of monopoles of charge $k$.  This is the
universal cover of the quotient $\cN_k/\RR^3\times S^1$, where
$\RR^3$
acts by translations and $S^1$ by rotations of the framing (or `large'
gauge transformations).  The quotient has fundamental group $\ZZ_k$,
which we identify with the group of complex $k$-th roots of unity
\cite{AH,SS}.  Since $\RR^3\times S^1$ acts isometrically on $\cN_k$,
the quotient and its universal cover inherit a natural metric\footnote{In fact thinking of $\cM_k$ as the space of
  centred monopoles, $\cM_k/S^1$ can be identified as an $S^1$-hyperK\"ahler
  quotient of $\cN_k$. In particular $\cM_k/S^1$ inherits a
  hyperK\"ahler metric from $\cN_k$.}  from
that of $\cN_k$.
If $\zeta \in \ZZ_k$, denote by $\alpha_\zeta$ the
corresponding deck transformation, which will be an isometry of $\unicov{k}$.

The Sen Conjecture predicts the dimension of the space
$\cH^i(\unicov{k})$, the space of $L^2$ harmonic $i$-forms on
$\unicov{k}$.  More precisely, let
\begin{equation}
\cH^i_{k,\ell} =
\{ u \in \cH^i(\unicov{k}) : \alpha_\zeta^*u = \zeta^{\ell}u
\},\;\;\ell =0,1,\ldots,k-1.
\end{equation}
(The physics interpretation of $\ell$ is as the {\em electric charge}
of the quantum state $u$.)

\begin {con}[Sen Conjecture] \cite{sen1994strong,SS}
 \begin{enumerate}
  \item[(S.1)] If $k$ and $\ell$ are coprime, then
    $\cH^{2k-2}_{k,\ell} \cong \CC$, while  $\cH^i_{k,\ell} = 0$
    for $i \neq 2k-2$;
  \item[(S.2)] if $k$ and $\ell$ are not coprime, then $\cH^i_{k,\ell} = 0$ for all $i$.
 \end{enumerate}
\end{con}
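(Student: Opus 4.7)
The plan is to prove the coprime case (S.1); the non-coprime case (S.2) appears to require genuine Fredholm analysis of $\rd + \rd^*$ on $\unicov{k}$, which is beyond the soft, topological methods I propose here. The strategy is to combine the description of $\ov{\unicov{k}}$ as a manifold with corners --- the main content of this paper --- with the Stokes/averaging argument of Segal and Selby \cite{SS}. Since $\unicov{k}$ is a complete hyperK\"ahler manifold of real dimension $4k-4$, the predicted middle degree $2k-2$ is consistent.

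First I would extract the needed asymptotic data from the compactification. Each boundary hypersurface of $\ov{\unicov{k}}$ corresponds to an ordered decomposition $k = k_1 + k_2$ of the charge into two clusters; near such a face the metric $g_k$ is asymptotic to a Gibbons--Manton-type fibration over the parameter space that records the relative position and framing of the clusters, with fibers modeled on $\unicov{k_1}\times\unicov{k_2}$. The $\ZZ_k$ deck action is identified on such a face with the image of $\zeta$ under an explicit embedding $\ZZ_k \hookrightarrow \ZZ_{k_1}\times\ZZ_{k_2}$ depending on $(k_1,k_2)$.

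Next, I would adapt Segal--Selby by induction on $k$, with trivial base case $k=1$. Given $u\in\cH^i_{k,\ell}$, apply Stokes' theorem on $\unicov{k}$ truncated a distance $\ve$ from each boundary hypersurface; this expresses $\norm{u}^2_{L^2}$ as a sum of boundary integrals indexed by faces. Coprimality of $(k,\ell)$ forces the restriction of $u$ to each face $F_{k_1,k_2}$ to lie in a non-trivial eigenspace of at least one of $\ZZ_{k_1}$, $\ZZ_{k_2}$ (they cannot both be trivial, as that would give $\ell\equiv 0\bmod k$). By the inductive hypothesis the corresponding $L^2$ harmonic sections on the lower-charge factor live only in middle degree, and a dimension count then forces the boundary integrals to vanish in the limit $\ve\to 0$ whenever $i\neq 2k-2$. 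The existence and uniqueness of a single class in $\cH^{2k-2}_{k,\ell}\cong\CC$ then follow by combining Segal--Selby's explicit harmonic representative, built from the rational-maps description of $\cN_k$, with a matching topological Euler-characteristic computation on $\ov{\unicov{k}}$ using Poincar\'e--Lefschetz duality.

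The main obstacle, I expect, is making the Stokes/averaging step rigorous uniformly near the higher-codimension corners of $\ov{\unicov{k}}$: one needs pointwise decay estimates for $|u|_{g_k}$ and $|\rd u|_{g_k}$ that are uniform not merely in the interior of a single boundary hypersurface but across every stratum where several faces meet. This is exactly where the manifold-with-corners structure, as opposed to a cruder conifold-type compactification, becomes essential: the nested local asymptotic models at corners should allow an iterated asymptotic expansion with controlled error terms, which is what makes the Segal--Selby argument "soft" and avoids any full parametrix construction for $\rd + \rd^*$.
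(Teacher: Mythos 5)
Your plan diverges substantially from what the paper actually does, and as written it contains both a misreading of the compactification and a genuine analytic gap that the paper's own argument is specifically designed to sidestep.

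First, a factual point: the boundary hypersurfaces of $\Mon_k$ are indexed by \emph{all} proper partitions $a=(k_1,\dots,k_n)$ of $k$, not only by two-cluster decompositions $k=k_1+k_2$. Restricting attention to $n=2$ misses most of the boundary, and the corners of $\Mon_k$ (arbitrary chains of partitions) are precisely where your Stokes argument would need uniform control. Second, the stated embedding $\ZZ_k\hookrightarrow\ZZ_{k_1}\times\ZZ_{k_2}$ does not exist in general: the diagonal $\ZZ_k\subset\TT$ sits inside the group $G_\lambda=\{(\zeta_1,\dots,\zeta_n):\prod_j\zeta_j^{k_j}=1\}$, which is $\TT^{n-1}\times\ZZ_d$ with $d=\gcd(k_1,\dots,k_n)$; it lands in $\ZZ_{k_1}\times\cdots\times\ZZ_{k_n}$ only for very special types. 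The group-theoretic mechanism you are gesturing at (forcing a nontrivial eigenvalue on some factor) is therefore not correct as stated.

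More fundamentally, the paper's argument is purely cohomological and does not use Stokes' theorem on truncated regions or pointwise decay of $|u|_{g_k}$ at all. The key step is that near each boundary hypersurface of type $\lambda$ the torus $G_\lambda$ acts by bounded (near-)isometries, and the coprimality hypothesis implies that some power $A_\zeta^{d}$ of the deck transformation lies in the identity component $\TT^{n-1}\subset G_\lambda$. Since it is isotopic to the identity through a bounded vector field, $(A_\zeta^d)^*$ acts trivially on the smooth $L^2$ cohomology of any $G_\lambda$-stable open set, while simultaneously acting by $\zeta^{\ell d}\neq 1$ on the $\ell$-weight piece; hence $H_2^*(U)_\ell=0$ for such $U$. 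Combined with a Mayer--Vietoris argument over the cover from Proposition~\ref{prop.mrs.bounded.cover} (whose boundedness of $|\rd\chi_i|$ is exactly what makes Mayer--Vietoris valid in $L^2$), this gives $H_2^*(\wt V)_\ell=0$ on a neighbourhood of $\partial\cunicov{k}$, so every $L^2$ harmonic form in weight $\ell$ is cohomologous to a compactly supported one. No induction on $k$, no decay estimates for harmonic forms, and no explicit harmonic representative are needed; the paper then simply invokes Segal--Selby's topological computation of $H^*(\unicov{k})_\ell$ and $H_c^*(\unicov{k})_\ell$ and Poincar\'e duality to identify $\Im\bigl(H_c^i(\unicov{k})_\ell\to H^i(\unicov{k})_\ell\bigr)$ with $\CC$ in degree $2k-2$ and $0$ otherwise. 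The decay estimates you flag as the ``main obstacle'' are precisely what the paper's soft route avoids having to prove.
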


We shall prove the `coprime case' (S.1) of the conjecture, along the
lines suggested in \cite{SS}.
\begin{thm}\label{sen.1a}
Statement \emph{(S.1)} of the Sen Conjecture holds true.
\end{thm}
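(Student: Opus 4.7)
The plan is to follow the scheme of Segal and Selby \cite{SS}, using the compactification of $\unicov{k}$ as a manifold with corners together with the asymptotic expansion of the metric (to be established in Part~II) to justify their ``soft'' argument without any explicit spectral analysis of $\rd+\rd^*$.

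As a first reduction, since $\unicov{k}$ is complete, the $L^2$ harmonic forms $\cH^i(\unicov{k})$ coincide with the reduced $L^2$ de Rham cohomology. The deck transformations $\alpha_\zeta$ are holomorphic isometries, in particular orientation-preserving, so the Hodge star commutes with the $\ZZ_k$-action and induces Poincar\'e duality isomorphisms
\[
 \cH^i_{k,\ell} \;\cong\; \cH^{4k-4-i}_{k,\ell}
\]
on each isotypic piece. It therefore suffices, for coprime $(k,\ell)$, to establish \textbf{(i)} a nonzero class in $\cH^{2k-2}_{k,\ell}$ and \textbf{(ii)} the vanishing $\cH^i_{k,\ell} = 0$ for $i < 2k-2$.

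For \textbf{(i)} I would adapt Segal-Selby's construction of a distinguished topological class. The compactification carries natural line bundles (arising from the Donaldson/rational-map picture, or equivalently from index bundles associated with the ADHM description) whose first Chern classes transform by $\zeta^\ell$ under the deck action; coprimality of $(k,\ell)$ is precisely what ensures that one such class is primitive in the relevant part of $H^2$. Using the asymptotic expansion of the metric, I would then show that this class admits a smooth $L^2$ representative whose harmonic projection gives the desired element of $\cH^{2k-2}_{k,\ell}$.

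The main work, and the step I expect to be the principal obstacle, is \textbf{(ii)}. The compactification shows that near each codimension-$r$ boundary corner, $\unicov{k}$ is modelled, up to decaying corrections, on a fibration whose fibre is a product of flat tori encoding the internal phases of the clusters and whose base involves the moduli spaces of the constituent lower-charge monopoles. Since $(k,\ell)$ are coprime, $\ZZ_k$ acts on each such fibre torus by a faithful character, so the $\ell$-isotypic component of fibrewise harmonic forms is concentrated above the middle degree of the fibre. I would then pass from this fibrewise statement to a global vanishing by a Mayer-Vietoris argument over the boundary hypersurfaces of the compactification. The delicate issue is ruling out contributions to $\cH^i_{k,\ell}$ not captured by this topological picture: this is where the full polyhomogeneous structure of the metric established in Part~II enters, together with a careful analysis of how $L^2$ harmonic forms extend to, and interact with, the boundary faces.
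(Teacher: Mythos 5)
You correctly frame this as an adaptation of Segal--Selby exploiting the manifold-with-corners compactification, but the mechanism you propose misses the essential ``soft'' step of their argument and replaces it with a hard analytic program that the paper is specifically designed to avoid.

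The paper does not construct harmonic representatives, nor does it ever analyze how $L^2$ harmonic forms extend to or interact with the boundary faces. The crucial result (Theorem~\ref{sen.1}) is the isomorphism
\[
\cH^i_{k,\ell}\;\cong\;\Im\bigl(H^i_c(\unicov{k})_\ell\longrightarrow H^i(\unicov{k})_\ell\bigr),
\]
after which the dimension count is a purely topological computation already contained in \cite{SS}. Injectivity of $\cH^*_{k,\ell}\to H^*(\unicov{k})_\ell$ is standard from completeness; the whole point is the \emph{surjectivity} of the orthogonal projection $H^*_c(\unicov{k})_\ell\to\cH^*_{k,\ell}$. This is where the compactification enters: one shows that the smooth $L^2$-cohomology $H^*_2(\wt V)_\ell$ vanishes on the union $\wt V$ of asymptotic regions, by Mayer--Vietoris together with a homotopy argument. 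An asymptotic region of type $\lambda=(k_1,\ldots,k_n)$ carries an action of $G_\lambda=\{(\zeta_1,\ldots,\zeta_n)\in\TT^n:\prod\zeta_j^{k_j}=1\}\cong\TT^{n-1}\times\ZZ_d$, $d=\gcd(k_i)$, by near-isometries with orbits of bounded length. Since $d<k$ divides $k$ and $\gcd(k,\ell)=1$, the deck transformation $A_\zeta^d$ lies in the identity component of $G_\lambda$, hence is joined to the identity through a path generated by a bounded vector field, yet acts on $H^*_2(\cdot)_\ell$ by the nontrivial scalar $\zeta^{\ell d}$; this forces $H^*_2(\wt V_i)_\ell=0$. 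Given a harmonic $\alpha$, its restriction to $\wt V$ is then exact in $L^2$, and $\alpha-\rd\tilde\gamma$ gives a compactly supported closed representative. The only metric input is what makes the Mayer--Vietoris and the homotopy legal in $L^2$: a finite cover with bounded-gradient partition of unity (Proposition~\ref{prop.mrs.bounded.cover}) and boundedness of the torus orbits (Theorem~\ref{main.theorem}).

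Your proposal instead invokes Poincar\'e duality to split into existence and vanishing, proposes constructing a harmonic Chern-class representative directly for existence, and for vanishing aims to analyze ``fibrewise harmonic forms'' and ``how $L^2$ harmonic forms extend to, and interact with, the boundary faces,'' leaning on the ``full polyhomogeneous structure.'' That last step is exactly the detailed Hodge-theoretic analysis of $\rd+\rd^*$ that the paper states is \emph{not} required for the coprime case. Moreover the claim that the $\ell$-isotypic fibrewise harmonic forms are ``concentrated above the middle degree of the fibre'' is not the operative mechanism: on a flat torus all harmonic forms are translation invariant and the diagonal $\ZZ_k$-action on them is trivial; it is the larger group $G_\lambda$ acting by near-isometries on the entire asymptotic region that carries the argument. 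Without the key reduction $\cH^*_{k,\ell}\cong\Im(H^*_c\to H^*)$ and the soft $L^2$-homotopy vanishing, your program would require an $L^2$ Hodge theory on $\Phi$-manifolds that neither Part~I nor the statement of Part~II develops, and which the Segal--Selby route is precisely designed to sidestep.
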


\subsection{A Metric Compactification of \texorpdfstring{$\cM_k$}{M(k)}}\label{sec.intro.metr.cpct}

Denote by $\cM_k$ the quotient $\cN_k/\RR^3$ of the moduli space by
translations.  It is almost equivalent to think of $\cM_k$ as the
space of monopoles centred at the origin in $\RR^3$.  If $k\geq 2$,
$\cM_k$ is still non-compact and it is really the non-compactness of
the translation-group $\RR^3$ which underlies the non-compactness $\cM_k$ itself.  To explain this, let $m^{\nu}\in \cM_k$ be a
divergent sequence.  Then to paraphrase \cite[Proposition 3.8]{AH}, a
subsequence of $m^{\nu}$ consists of `widely separated monopoles of
type $a$' for some proper partition $a = (k_1,\ldots,k_n)$ of $k$.
({\em Proper} means
$n\geq 2$ and all $k_j\geq 1$).  The condition of
wide separation means that there
is a configuration of points $(p_1^{\nu},\ldots,p_n^{\nu})$ such that
\begin{equation}\label{e2.25.10.17}
\ve^{-1} :=\min_{i<j}|p_i - p_j| \gg 1
\end{equation}
and a collection of centred monopoles
\begin{equation}\label{e3.25.10.17}
(m_1,\ldots,m_n) \in \cM_{k_1} \times \cdots \cM_{k_n}
\end{equation}
such that for each $j$, $m^\nu(z-p^\nu_j)$ converges to
$m_j(z)$ on any fixed ball $\{|z|<R\}$.  In other words for large $\nu$,
$m$ looks like an approximate superposition of the translated monopoles $m_j(\cdot +
p^{\nu}_j)$.   (NB: given $a$, there is a subgroup of the symmetric
group $\Sigma_n$ which acts on configurations of type $a$,
consisting of those permutations $\sigma$ of $\{1,\ldots,n\}$ with $k_{\sigma(i)} = k_i$,
for all $i$.  A widely separated configuration of monopoles really
involves unordered configurations of points (and monopoles) where we
factor out by this group action. See \S\ref{sec.sym.mbs} for more detail.)

\begin{rmk}
This classification of divergent sequences according to `type'
strongly suggests that $\cM_k$ should have asymptotic regions which
correspond to the different types of divergent sequences in $\cM_k$.
This intuition is supported by intuition coming from the identifcation
of $\cN_k$ with $\Rat_k$, the space of based rational functions of
degree $k$ \cite{skd1984}.   The basic idea is that if $f_1$ and $f_2$
are rational functions respectively of degrees $k_1$ and $k_2$, then
generically their sum $f_1+f_2$ will be a rational function of degree
$k_1 +k_2$.  There are, however, subtleties in using this to try to
describe the asymptotic regions of $\cN_k$ because (for example) the identification
$\cN_k = \Rat_k$ breaks the symmetry of $\cN_k$ by singling out a
direction in $\RR^3$.  However, the idea is largely captured by the cover of $\cM_k$
by open sets corresponding to `decomposable monopoles' which appears
in \S\ref{S:cluster} of this paper.

We should also note that the case $a=(1,\ldots,1)$ is well understood
\cite{JT,RB_GM,gibbons1995,AH} and that $L^2$ harmonic forms on the Atiyah-Hitchin manifold $\cM_2/\TT$ have been studied in \cite[7.1.2]{HHM}; furthermore the results of
\cite{KS} give a partial description of such regions.
\end{rmk}

We shall introduce a compactification $\Mon_k$ of $\cM_k$, which will
be a \emph{manifold with corners} (MWC).  This provides a convenient
and powerful way to deal with the complexities of the asymptotic
geometry of $\cM_k$, including good definitions of the various
asymptotic regions, their intersections, and the behaviour of the
$L^2$ metric in each region.
A similar approach, using MWCs to
study complete Calabi--Yau metrics with complicated asymptotic
behaviour, can be found in \cite{qac}.  Manifolds with corners are
convenient for the study of many other non-compact and singular
problems in geometric analysis, see for example
\cite{gpaction, sigpack, fibcorn, MZ17, MZ18}.
Vasy's approach via MWCs to many-body geometry \cite{vasymb} underlies
our definition of `ideal configurations' of points in a euclidean
space and is an essential ingredient in our construction.

As a compact  manifold with corners, $\Mon_k$ has a finite number of boundary
hypersurfaces; these are indexed (at least for the moment) by proper
partitions $a$ of $k$ and denoted $N_a$.  To say that $\Mon_k$ is a
compactification of $\cM_k$ means that the interior of $\Mon_k$ is
$\cM_k$,
\begin{equation}
\Mon_k \setminus \bigcup_{a}N_a = \cM_k.
\end{equation}
Part of the definition of MWC is that the boundary hypersurfaces are
embedded. In particular, we may choose a boundary defining function
$\rho_a \geq 0$ for each $N_a$ and for sufficiently small
$\delta>0$, the sublevel set $U_a=\{\rho_a<\delta\}$ will be
diffeomorphic to the product $[0,\delta) \times N_a$.
The (hitherto ill-defined) asymptotic regions of $\cM_k$ can now be
defined precisely as the interiors of the $U_a$; these are
diffeomorphic to products $(0,\delta) \times (N_a)^\circ$, where of course
$(N_a)^\circ$ is the interior of the MWC $N_a$.

One of the advantages of $\Mon_k$ is that the corners structure
encodes the intersection properties of the different asymptotic
regions.  Recall that for partitions $a$ and $b$ of $k$, $a$ is {\em
  finer} than $b$, written $a\leq b$, if $b$ is obtained from $a$ by
bracketing terms in $a$. The boundary hypersurfaces $N_a$
and $N_b$ will intersect if and only if the corresponding partitions $a$ and $b$ of $k$
are {\em comparable}, that is $a\leq b$ or $b\leq a$.  For example, when $k=3$, we have the two
proper partitions $a = (1,1,1)$ and $b = (1,2)$ of $k$, and $a$ is a
refinement of (or simply finer than) $b$.   There are two asymptotic regions
of $\cM_3$ and their intersection consists of widely separated
monopoles of type $(1,1,1)$ with centres at $(p_1,p_2,p_3)$ such that
the distance $|p_1-p_2|$ is large
but much smaller than the distances $|p_1-p_3|$ and $|p_2-p_3|$.  Our
compactification handles these configurations through the
parameters
\begin{equation}\label{e1.30.7.18}
\rho_1 =\frac{1}{|p_1-p_2|},\; \rho_2 = \frac{|p_1-p_2|}{|p_1-p_3|}.
\end{equation}
which turn out to be local boundary defining functions for the two
boundary hypersurfaces.  To see why,
notice that if $\rho_1\to0$ with $\rho_2>0$ fixed, then $|p_1-p_2|$,
$|p_1-p_3|$, $|p_2-p_3|$ all tend to $\infty$ and the ratios between
them are all bounded; on the other hand, if $\rho_1>0$ is fixed and
$\rho_2\to 0$, then $|p_1-p_2|$ remains bounded while $|p_1-p_3|$ and
$|p_2-p_3|$ both go to $\infty$.  The two parameters
$(\rho_1,\rho_2)$ can thus be used to describe diverging triples
$(p_1^s,p_2^s,p_3^s)$ where $1\ll |p_1^s-p_2^s|\ll |p_1^s
- p^s_3|$.

More generally, $N_a \cap N_b$ will have a number of
disconnected components, corresponding to inequivalent ways of
bracketing the terms in $a$ to produce $b$.  The simplest example
occurs for $k=5$:
\begin{equation}\label{e1.29.8.18}
1+1+1+2 = (1+1+1)+2,\;\;1+1+1+2 = (1+1) + (1+2)
\end{equation}
both of which display the partition $1+1+1+2$ as a refinement
of $2+3$.   This is not a mere technicality as the two ways of
bracketing terms correspond to different intersections of asymptotic regions of
$\cM_5$. For this example, we have a $2$-monopole and a $3$-monopole,
widely separated. In the first case,
the $3$-monopole is in the $(1,1,1)$ asymptotic
region of $\cM_3$ while the $2$-monopole remains in a bounded subset
of $\cM_2$; in the second, the $3$-monopole is in the $(2,1)$
asymptotic region of $\cM_3$ and the $2$-monopole is in the asymptotic
$(1,1)$ region of $\cM_2$.

The situation is best described in terms of partitions $\lambda$
of the set $\bk = \{1,\ldots,k\}$. Such $\lambda$ has a type
$a=[\lambda]$, by taking the sizes of the blocks of
$\lambda$.  Equivalently, $\Sigma_k$ acts on the set of partitions
$\{\lambda\}$, and the set of orbits is precisely the set of partions
$\{a\}$ of $k$.
Refinement of partitions $\lambda$ (where $\lambda \leq \mu$ if every
block of $\mu$ is a union of blocks of $\lambda$) goes over to
refinement of integer partitions.  The point illustrated by the above
example is that the set of $\Sigma_k$-orbits of length-$2$ chains
$\lambda < \mu$ is {\em not} the same as the length-$2$ chains $a<b$
in the set of integer partitions: it is the former, not the latter,
that labels the codimension-$2$ hypersurfaces of $\Mon_k$.  More
generally, the codimension-$d$ corners of $\Mon_k$ are labelled by the
$\Sigma_k$-orbits of length-$d$ chains $\lambda_1 < \lambda_2 < \cdots
< \lambda_d$ in the set of partitions of $\bk$.

One should think of $N_a$ as the (compactified) moduli space of {\em ideal}
monopoles of type $a$---ideal in the sense of `infinitely
separated' configurations of points.  This will be made precise in the
next section: it is noteworthy that $N_a$ has a natural
definition as a manifold with corners, whereas to define `asymptotic
regions' requires arbitrary choices.

Let us now explain how asymptotic behaviour of the metric on $\cM_k$
is captured by the compactification $\Mon_k$.  The metric behaviour
reflects the additional structure of a fibration
\begin{equation}\label{e5.25.10.17}
\phi_a : N_a \to B_a,
\end{equation}
of each boundary hypersurface, where base and fibre are compact MWC.
The fibrations enjoy compatibility conditions at
the non-empty intersections $N_a \cap N_b$, giving
$\Mon_k$ an {\em iterated boundary fibration (IBF) structure}\footnote{Essentially the same structure
appears in \cite{gpaction,sigpack, fibcorn, qac} but there is unfortunately no
agreement on terminology} \cite{gpaction,sigpack,fibcorn,qac} which we
shall recall in \S\ref{sec.geometry} below.    Generally, if
$M$ is a compact MWC with an iterated boundary fibration structure, there is a smooth
vector bundle which we shall denote\footnote{Again, there is no
  agreement on terminology}
by $\fT M$, whose restriction to the interior $M^\circ$ is canonically isomorphic to
$TM^\circ$, but whose sections have particular decay properties at the
boundary (see \S\ref{sec.geometry}).

 A smooth metric on $\fT M$  (smooth up to and including all
boundary hypersurfaces) will automatically define a complete metric on
$M^\circ$ and the smoothness, as a metric on $\fT M$, captures precise
asymptotic behaviour near each boundary hypersurface.   A metric
arising in this way will be called a $\Phi$-metric.
Such metrics were first introduced in \cite{Anda-Rafe,qac}, where they are
referred to as `QAC' or `QFB' metrics.

Then our main theorem about the metric structure of $\cM_k$ is as
follows, a more precise version of which will be given in \S\ref{S:model_metrics}:
\begin{thm}\label{main.theorem}
The moduli space $\cM_k$ has a compactification $\Mon_k$ as a compact
MWC with iterated boundary fibration, and the $L^2$ metric $g_k$ extends to a smooth $\Phi$-metric on $\Mon_k$, which we denote by $g_k$ again.  Moreover, there is an
isometric $\TT$-action on $\Mon_k$ whose restriction to the interior
is the triholomorphic $\TT$-action on $\cM_k$, and whose orbits
are of bounded length with respect to $g_k$.
\end{thm}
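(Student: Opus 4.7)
My plan is to break the theorem into three pieces---construction of $\Mon_k$ as a MWC with IBF, extension of the $L^2$ metric as a $\Phi$-metric, and extension of the $\TT$-action---and to combine a Vasy-style iterated blow-up of configuration space with a Taubes-style gluing construction.

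Step 1 (construction of $\Mon_k$). As the introduction stresses, the boundary stratification should be indexed by $\Sigma_k$-orbits of chains of set partitions of $\bk=\{1,\ldots,k\}$, not just by integer partitions. I would first build a compact base configuration MWC: starting from the radial compactification of $(\RR^3)^k/\RR^3$, iteratively blow up the images of the partial diagonals in the order dictated by refinement of set partitions, following Vasy's many-body construction. Each boundary hypersurface $\wt N_\lambda$ of this compactified configuration space fibers over an analogous blown-up configuration space of cluster centers, with fiber encoding the relative positions inside each cluster. To obtain $\Mon_k$, attach over the interior of each $\wt N_\lambda$ a product $\prod_j \cM_{k_j}$ of lower-charge centered moduli spaces, one factor per block of $\lambda$, and compactify each factor by induction on $k$. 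This gives $N_a$ as a bundle of MWCs over the cluster-configuration base $B_a$, hence the fibration $\phi_a : N_a \to B_a$; consistency at deeper corners---corresponding to refinements $\lambda < \mu$---follows because a refinement chain translates into nested fibrations of the lower-charge compactifications, which is exactly the IBF compatibility condition.

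Step 2 (identification and metric). Matching the abstract $\Mon_k$ to the genuine $\cM_k$ near infinity is done by a superposition/gluing procedure: given widely separated centered monopoles $(m_1,\ldots,m_n)$ of types $k_1,\ldots,k_n$ and centers $(p_1,\ldots,p_n)$ with $\ve^{-1}=\min_{i<j}|p_i-p_j|$ large, glue them into an approximate monopole and perturb to a true solution by the implicit function theorem for the Bogomolny equation. This yields an identification of a collar neighborhood of each $N_a$ with an asymptotic region of $\cM_k$, compatible with the boundary stratification. The $L^2$ metric $g_k$ transported under this identification agrees to leading order with the Gibbons--Manton--Bielawski hyperK\"ahler metric on ideal configurations, which is a model $\Phi$-metric for the $\fT\Mon_k$ bundle; the remainder is shown to be a smooth, lower-order $\Phi$-section, so $g_k$ extends smoothly to a $\Phi$-metric on $\Mon_k$.

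Step 3 ($\TT$-action). The triholomorphic $\TT$-action on $\cN_k$ commutes with $\RR^3$-translations and descends to $\cM_k$. Under the cluster description it acts diagonally on each factor $\cM_{k_j}$, and by induction extends smoothly to each $\cM_{k_j}$; hence it extends smoothly to the fiber product presentation of $N_a$ and is compatible with the fibrations $\phi_a$. Isometry of the extension on $\Mon_k$ follows from isometry on the dense open interior together with continuity of $g_k$, and bounded orbit length is immediate from compactness of $\TT$, compactness of $\Mon_k$, and continuity of the $\Phi$-metric.

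The main obstacle is the smoothness assertion of the $L^2$ metric in Step 2. Proving that the $L^2$ inner product on harmonic representatives of the linearized monopole equation decays at each boundary face with precisely the rates required by $\fT\Mon_k$, with polyhomogeneous remainders, requires uniform control of the linearized Bogomolny operator and its Green's functions in the degenerating cluster limits. This is exactly the analysis deferred to Part~II; the role of the present paper is to set up the IBF structure on $\Mon_k$ that makes such estimates meaningful, and to make precise the leading GMB model in \S\ref{S:model_metrics}.
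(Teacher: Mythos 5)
You have correctly perceived that the present paper does not actually prove Theorem~\ref{main.theorem}: the authors say explicitly in \S\ref{intro} that ``the complete proof of which will appear in Part~II,'' and \S\ref{sec.mon.cpctn} states Theorem~\ref{cpct.conj} (the precise construction of $\Mon_\nu$) with the caveat ``This will not be proved here, but we shall carry out the consistency checks that are implied by it.'' What Part~I supplies is: (i) the definition of the many-body compactifications and IBF structures (\S\S\ref{geom.prelim}--\ref{sec.geometry}); (ii) the explicit description of the boundary hypersurfaces $\cI_{\lambda\nu}$, the induced fibrations, and the verification that they fit together as an IBF (\S\ref{sec.mon.cpctn}); (iii) the refined metric statement in \S\ref{S:model_metrics}; and (iv) a proof via Taubes' estimates that $\Mon_k$ as defined is compact (\S\ref{S:cluster}). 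Your proposal is therefore a sketch of the Part~II argument, not a reconstruction of anything proved here, and your closing paragraph signals that you know this.

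As a sketch it captures the right broad shape (Vasy-style iterated blow-up of the reduced configuration space indexed by set partitions; attach compactified products of lower-charge moduli spaces over the cluster base; glue and perturb via the Bogomolny implicit function theorem; extend the $\TT$-action by compatibility with the inductive description), but there is one genuine structural omission. In Step~1 you describe the boundary hypersurface as obtained by attaching, over the interior of each $\wt N_\lambda$, the product $\prod_j\cM_{k_j}$. This misses the \emph{Gibbons--Manton torus bundle twisting}, which is central to the construction. The paper's boundary hypersurface is
\[
\cI_\nu \;=\; \cT_{\nu k}\times_{\TT^{\row(\nu)}}\Mon_\nu \longrightarrow \FB_{\nu k},
\]
i.e.\ the bundle associated to a nontrivial rank-$\row(\nu)$ torus bundle $\cT_{\nu k}\to\FB_{\nu k}$ via the diagonal $\TT^{\row(\nu)}$-action on $\Mon_\nu=\prod_j\Mon_{k_j}$. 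The curvature data of $\cT_{\nu k}$ (built from the $2$-forms $\omega_i=2\sum \pi_{ij}^*\omega$) is precisely what encodes the pairwise electromagnetic interactions between widely separated clusters; without it one obtains the wrong leading metric (a flat product rather than the Gibbons--Manton form) and, more importantly, the IBF consistency at corners fails, because the identification of $\cT_{\lambda\nu}|\p_\mu \FB_{\lambda\nu}$ with $\cT_{\lambda\mu}\times_{\TT^{\row(\mu)}}\cT_{\mu\nu}$ (Lemma~\ref{lem.GM.bdy}) is exactly what makes the two descriptions of a codimension-$2$ corner in \eqref{surface1} and \eqref{surface2} agree. If your intended ``attach over the interior'' is meant to include this twisting, you should say so explicitly: the fibration $\phi_a:N_a\to B_a$ is not a product, and the nontrivial torus bundle is the ingredient that distinguishes the monopole compactification from the bare configuration-space compactification.

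Two smaller points. First, your Step~2 should also account for the $\Sym_\nu$-quotient: the base $B_a$ and the total space $\cI_a$ are quotients of $\FB_{\nu k}$ and $\cI_\nu$ by the finite symmetry group $\Sym_\nu$, and one must check (as the paper does via the exact sequence \eqref{e1.21.9.18} and its relatives) that the quotients at different depths are compatible. Second, your claim that ``bounded orbit length is immediate from compactness of $\TT$, compactness of $\Mon_k$, and continuity of the $\Phi$-metric'' is too quick: the $\Phi$-metric is a metric on $\fT\Mon_k$, not on $T\Mon_k$, and degenerates as a metric on $T\Mon_k$ at the boundary; compactness alone does not give bounded orbit lengths. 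The correct argument is that the generating vector field of the $\TT$-action is tangent to the fibres of each $\phi_\lambda$ (indeed, it lies in the $\Phi$-directions of bounded length in the sense of the $\Phi$-tangent bundle), hence is a bounded section of $\fT\Mon_k$, and then compactness of $\TT\times\Mon_k$ finishes the job.
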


Thus among the various possible compactifications of $\cM_k$
which arise from the different descriptions of the moduli space, our
compactification $\Mon_k$ is `metrically natural'.
This result, combined with a slight refinement of the argument in \cite[Sect. 3]{SS}, leads to a
quick proof of the coprime case of the Sen Conjecture.

The plan of the rest of this paper is as follows.  In Part I, we shall give our
main results about the moduli spaces, deferring the proof of the main
theorem, Theorem~\ref{main.theorem} to Part II.  We start in the next
section with compactifications of configuration spaces of points in a
euclidean space inspired by Vasy's resolved many-body spaces
\cite{vasymb}. In \S\ref{sec.geometry}, we recall the definition of
iterated boundary fibrations and then in \S\ref{sec.mon.cpctn} we
describe the compactification $\Mon_k$ and its
iterated boundary fibration structure.
 In \S \ref{S:cluster}, we explain, using the {\em a priori} estimates
 of Taubes, why any divergent sequence of monopoles in $\cM_k$ has a
 limit point in $\Mon_k$. Finally, in \S\ref{psen} we prove the
 coprime case of the Sen Conjecture, following an idea of
 Segal--Selby.   Sections \ref{sec.mon.cpctn} and \ref{psen} are
 independent of each other and can be read in either order.

\vspace{6pt}
{\noindent \bf Acknowledgements:} The authors are grateful to many
colleagues for useful conversations during the long gestation of this
project.  In particular, we 
thank Pierre Albin, Roger Bielawski, Daniel Grieser, Rafe Mazzeo,
Richard Melrose, Fr\'ed\'eric Rochon, Andy Royston and Andr\'as Vasy.

The research was supported in part by an EPSRC grant
EP/K036696/1.  The work was also supported by the NSF under Grant No.\
DMS-1440140 while the third author was in residence at the
Mathematical Sciences Research Institute in Berkeley, California,
during the Spring 2016 semester. The second author was supported by the NSF under Grant\ No.\ DMS-1811995.

\section{Geometric Preliminaries}\label{geom.prelim}

Before coming to the compactification of $\cM_k$, we devote this section to
the compactification of a simpler family of so-called `many-body spaces', in
particular the `reduced configuration spaces' $\RR^{3n}/\RR^3$ of $n \leq k$
points in $\RR^3$ up to translation. This interlude serves several purposes: first, these
spaces serve as simplified models for the monopole moduli spaces themselves
(indeed, these reduced configuration spaces, modulo action by the symmetric
group, are essentially equivalent to the moduli spaces of {\em abelian}
$\mathrm{U}(1)$ monopoles with appropriate framing); second, the appropriate
compactifications of many-body spaces are quite easy to construct, yet still illustrate
the essential combinatorial and geometric structure of our compactification $\Mon_k$ of
$\cM_k$, the construction of which is significantly more difficult; finally, the compactified
many-body space machinery plays a key technical role in our actual construction of $\Mon_k$ in Part II.
The forbearance of the reader is appreciated as we proceed to introduce a certain amount of notation.

We assume familiarity with the basic notions of manifolds with
corners as presented, for example in \cite{CCN} or \cite{daomwc}.
Other useful references are \cite{vasymb,gpaction,sigpack, fibcorn, qac}.


\subsection{Euclidean Many-Body Spaces and their Resolutions}\label{sec.mbs}


Let $V$ be a real euclidean vector space of dimension $N$. Let $\cW$ be
a finite family of linear subspaces of $V$ satisfying
\begin{eqnarray}
0,V &\in& \cW \label{e1.10.3.16} \\
W,W'\in \cW &\Rightarrow& W\cap W' \in \cW.\label{e2.10.3.16}
\end{eqnarray}
We refer to the second condition as `intersection closure' and to a such a family $\cW$ as a \emph{linear many-body structure}. The set
$\cW$ is partially ordered by inclusion.
Let $\ol{V}$ denote the radial compactification of $V$, and $\partial V$ its
boundary. It is best to think of $\partial V$
as the quotient $(V \setminus
0)/\RR^+$.  For $W\in\cW$ we denote by $\ol{W}$ and $\partial W$ the
corresponding radial compactifications and boundaries. Note that $\partial\{0\}
= \emptyset$.  It is also convenient to put
\begin{equation}\label{e2.31.7.18}
\partial \cW = \{ \p W : W \in \cW\}
\end{equation}
so that $\partial \cW$ is a set of submanifolds of $\partial V$.

\begin{dfn} If $\cW$ satisfies \eqref{e1.10.3.16} and \eqref{e2.10.3.16}, set
\begin{equation}\label{e6.10.3.16}
 \MB(V,\cW) := [\ol{V}; \partial\cW^*],
\end{equation}
and
\begin{equation}\label{e5.10.3.16}
 \FB(V,\cW) := [ \partial V; \partial \cW^* ]
\end{equation}
where $\cW^* := \cW\setminus \{V\}$. We call $\MB(V,\cW)$ the
\emph{many-body compactification} of $V$ with respect to $\cW$ and
$\FB(V,\cW)$ the \emph{free boundary} of $\MB(V,\cW)$.
\end{dfn}

\begin{rmk}
When it is clear what family $\cW$ is under consideration, and there
is no risk of confusion, we shall abbreviate the notation to
$\MB(V)$, and $\FB(V)$.
\end{rmk}

\begin{rmk}
We shall see below that the lift to $\MB(V,\cW)$ of $\p V$ is
$\FB(V,\cW)$, which is the reason for the terminology `free
boundary'.  This space is a natural compactification of the set of all
`ideal points' of $V$ which do not lie on any of the subspaces in $\cW^*$.
\end{rmk}

\begin{rmk} It is to be understood that the blow-ups are performed in
  size order.  The intersection-closure means that the blow-ups are
  well-defined; after $j$ blow-ups, the lifts of the remaining
  submanifolds are p-submanifolds, and those that intersect in the
  original family are disjoint when their intersection is blown up \cite{vasymb,kottke_mbcat}.
\end{rmk}

The important for example for us comes from the family of diagonals
in $E^k$, where $E$ is a (finite-dimensional) euclidean space.

\begin{ex}[Diagonals and configurations]\label{ex.diag.1}  Let $E =
  \RR^m$, and $V = E^k$ ($k$-fold product).  The set $\cD$ of
all diagonals of $E^k$ satisfies conditions
  \eqref{e1.10.3.16} and \eqref{e2.10.3.16} provided that we regard
  $0$ and $E^k$ itself as diagonals. In this case $\MB(E^k,\cD)$ is
  a natural compactification of the {\em configuration space of $k$
    points in $\RR^m$}.
\end{ex}

\begin{ex}[Reduced Configuration Spaces]\label{ex.diag.2} Continuing
  the previous example, note that all true diagonals (i.e.\ we exclude
  the subspace $0$) contain the minimal diagonal
$$
D_k = \{p_1= \cdots = p_k\}.
$$
Thus there is a quotient family
 \begin{equation}\label{E:reduced_mbsystem}
  \cD' = \{D/D_k : D \in \cD, D\neq 0\}
 \end{equation}
of linear subspaces of $E^k/E$ which is again intersection-closed. Then
$\MB(E^k/E,\cD')$ is the compactification of the space
of configurations of $k$ points mod translation, and
$\FB(E^k/E,\cD')$ is the space of ideal configurations
mod translation. We also refer to these spaces as (compactified)
{\em reduced configuration spaces}.
\end{ex}

If $V$ is a euclidean space with many-body structure $\cW$ and $A \in
\cW$, then both $A$ and $V/A$ inherit many-body structures.  The
many-body structure on $A$ is just the set of $W\in \cW$ with
$W\subset A$; that on $V/A$ is the set of quotients $W/A$, with
$W\supset A$.  We shall write $\MB(A)$, $\FB(A)$ for the many-body
compactification and free-boundary of $A$ with this induced many-body
structure and similarly for $V/A$.  Given the euclidean structure of
$V$ we can of course replace the quotient $V/A$ by $A^{\perp}$; then
the many-body structure is the set $\{W^{\perp} : W\in \cW, W\supset
A\}$.

These sub- and quotient-many-body structures appear when describing
the boundary faces of $\MB(V,\cW)$.
From the definition, the boundary hypersurfaces of $\MB(V,\cW)$ are
labelled precisely by the non-zero elements of $\cW$.  The free boundary
$\FB(V,\cW)$) corresponds to the element $V\in \cW$ and is the lift to
the blow-up of the boundary of $\ol{V}$.  Similarly,
the boundary hypersurfaces of
$\FB(V,\cW)$ are in one-one correspondence with the non-zero elements of
$\cW^*$.

\begin{thm}[\cite{kottke_mbcat}, Theorem~5.1]\label{thm.mbs.bhs} Let $0 \neq A\in \cW$. Then:
 \begin{enumerate}
  \item[(a)] The boundary hypersurface $N$ in $\MB(V,\cW)$ corresponding to $A$ is the compact MWC
\begin{equation}\label{e9.10.3.16}
N = \MB(V/A) \times \FB(A).
\end{equation}
  \item[(b)]  The boundary hypersurfaces $N_1$ and $N_2$ corresponding to
    $A_1,A_2\in \cW$ meet in $\MB(V)$ if and only if $A_1$ and $A_2$
    are comparable ($A_1\subset A_2$ or vice versa).  More generally
    the non-empty $d$-fold intersections of boundary hypersurfaces of
    $\MB(V)$ correspond precisely to length-$d$ chains
\begin{equation}\label{e2.29.8.18}
A_d\subset \cdots \subset A_2 \subset A_1
\end{equation}
of elements of $\cW$; every codimension-$d$ boundary face is a
connected component of such an intersection.  With $N_j$ the boundary
hypersurface corresponding to $A_j$, we have
\begin{equation}\label{e3.29.8.18}
N_1 \cap N_2\cap\cdots \cap N_d =
\MB(V/A_1) \times \FB(A_1/A_2) \times \cdots \times \FB(A_d).
\end{equation}
\end{enumerate}
\end{thm}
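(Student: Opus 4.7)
The plan is to derive both parts from the iterated blow-up construction of $\MB(V,\cW)$, working outward from the face created at the stage when each $\partial A$ is resolved, and using the standard fact for intersection-closed families (cf.\ \cite{kottke_mbcat,vasymb}): when $\partial\cW^*$ is blown up in size order, the lifts of the remaining submanifolds remain p-submanifolds throughout, and lifts corresponding to non-comparable subspaces become disjoint immediately after their (smaller) intersection has been resolved.

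For~(a), fix a nonzero $A\in\cW^*$ and inspect the construction at the moment $\partial A$ is blown up. By size order, every $\partial W$ with $W\subsetneq A$ --- each a p-submanifold of $\partial A$ --- has already been resolved, so the lift of $\partial A$ at this stage equals $\FB(A)$ by the defining formula. Using the orthogonal splitting $V=A\oplus A^\perp$, local coordinates on $\overline{V}$ near $\partial A$ may be chosen as $(r,\omega,u)$ with
\begin{equation*}
  r=1/|\pi_A(p)|,\qquad \omega=\pi_A(p)/|\pi_A(p)|\in S(A),\qquad u=\pi_{A^\perp}(p)/|\pi_A(p)|\in A^\perp,
\end{equation*}
so that $\partial A=\{r=0,\,u=0\}$. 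Blowing this p-submanifold up amounts to introducing polar coordinates in $(r,u)\in\RR_+\oplus A^\perp$; the inward-pointing normal hemisphere is canonically identified with the radial compactification $\overline{V/A}$, so the new front face is naturally $\overline{V/A}\times\FB(A)$. For $W\supsetneq A$, writing $p=a+v$ with $a\in A\subset W$ and $v\in A^\perp$ shows that $p\in W$ iff $v\in W\cap A^\perp$, whence $\partial W=\{r=0,\,u\in W\cap A^\perp\}$ in these coordinates; the lift of $\partial W$ therefore meets the new front face in exactly $\partial(W/A)\times\FB(A)\subset\overline{V/A}\times\FB(A)$, sitting over the boundary sphere of the $\overline{V/A}$ factor. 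The remaining blow-ups thus act on the front face only through the $\overline{V/A}$ factor, and by exactly the quotient many-body structure $\{W/A:W\in\cW,\,W\supseteq A\}$, converting $\overline{V/A}$ into $\MB(V/A)$. This gives $N=\MB(V/A)\times\FB(A)$, proving~(a).

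For~(b), the ``meet iff comparable'' assertion is immediate from the disjointness principle above: if $A_1,A_2\in\cW^*$ are incomparable then $A_1\cap A_2\in\cW^*$ is strictly smaller than both, so its boundary is resolved first and the lifts of $\partial A_1,\partial A_2$ become disjoint at that stage, forcing $N_1\cap N_2=\emptyset$. For the product formula on a chain $A_d\subsetneq\cdots\subsetneq A_1$, I induct on $d$, with the base $d=1$ being~(a). For the step, (a) gives $N_1=\MB(V/A_1)\times\FB(A_1)$, and the faces $N_2,\ldots,N_d$ meet $N_1$ only through the $\FB(A_1)$ factor, where they appear as the boundary hypersurfaces of $\FB(A_1)$ indexed by $A_2,\ldots,A_d$. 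The argument of~(a) applies mutatis mutandis inside $\FB(A_1)$: the only change is that $\partial A_1$ is boundaryless, so the normal bundle of $\partial A_2\subset\partial A_1$ produces a full normal sphere rather than a hemisphere, yielding a front face $\FB(A_2)\times\partial(A_1/A_2)$ which the subsequent larger blow-ups resolve to $\FB(A_2)\times\FB(A_1/A_2)$. Iterating this decomposition $d-1$ further times inside $\FB(A_1)$ gives
\begin{equation*}
  N_2\cap\cdots\cap N_d\cap\FB(A_1) = \FB(A_1/A_2)\times\FB(A_2/A_3)\times\cdots\times\FB(A_d),
\end{equation*}
and multiplying by the unaffected $\MB(V/A_1)$ factor yields the claimed formula, with codimension-$d$ faces realized as the connected components of this intersection.

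The main subtlety is running the $\MB$- and $\FB$-style inductions in parallel and keeping track of which factor is a disc (arising from the hemisphere whenever one blows up a submanifold sitting on $\partial\overline{V}$) and which is a sphere (arising whenever one blows up strictly inside some $\partial A_i$); once the normal-bundle identification of~(a) is established, both inductions are driven by the same blow-up mechanism and the remainder is bookkeeping enforced by the intersection-closed structure of $\cW$.
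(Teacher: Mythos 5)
Your proof is correct and follows essentially the same route as the paper's brief sketch (which itself defers to \cite{kottke_mbcat}): only submanifolds comparable with $A$ contribute; the smaller blow-ups assemble $\FB(A)$; the blow-up of $\p A$ produces a disc (inward normal hemisphere $\cong\ol{V/A}$) or a sphere (normal sphere $\cong\p(A_1/A_2)$) factor depending on whether one is working in $\ol V$ or inside a boundaryless $\p A_1$; and the larger blow-ups convert that factor into $\MB(V/A)$ (resp.\ $\FB(A_1/A_2)$), with part (b) then following by induction on the chain length. You add the helpful explicit local coordinate $(r,\omega,u)$ computation of the front face, which the paper omits; the only minor point worth making explicit is that blow-ups of $\p W$ for $W$ incomparable with $A$ do not touch the front face, which is a consequence of the disjointness principle you already invoke (noting that when $A\cap W=0$ the boundaries are disjoint outright).
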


\begin{rmk} Here of course the many-body structure on $A_j/A_{j+1}$ is
  understood to be the family of subspaces $\{W/A_{j+1}: A_{j+1} \subset W \subset
  A_{j}\}$.
\end{rmk}

We refer to  \cite{kottke_mbcat} for the proof. The first part of this
theorem is proved by observing first that only those
submanifolds that are commensurable with $A$ can enter in the lift of $A$ to the
corresponding blow-up. Blowing up those that are contained in $A$
produces in each case the first factor. Upon blow-up of $A$ itself, we
get either $\ol{V_A}$ in the first case or $\partial V_A$ in the second. The
lift to this of $\partial W$ where $W \supset A$ is just $\partial W_A$, and
this is where the second factor comes from.  The second part follows
by induction.

Returning to Example~\ref{ex.diag.2}, with which we shall be concerned from now on,
it follows that the boundary hypersurfaces of the reduced configuration space $\MB(E^k/E)$ are
in bijection with the quotients
\[
	D_{\lambda k} := D_\lambda/D_k,
\]
where $\lambda$ is a partition of the set $\bk = \set{1,\ldots,k}$ and
$D_\lambda$ is the diagonal in which $p_i = p_j$ whenever $i$ and $j$ lie in
the same block. It is convenient to denote by $0$ the minimal partition of
$\bk$ into $k$ singletons and by $k$ the maximal partition of $\bk$ as a single
set. Then $D_0 = E^k$, $D_k$ is consistent with the earlier definition as the
minimal diagonal, and $D_{0,k} = D_0/D_k$ is the quotient configuration space
$E^k/E$.

We denote the corresponding boundary hypersurface by
\[
	N_\lambda \cong \MB(D_{0 \lambda}) \times \FB(D_{\lambda k}),
\]
and we note in passing the identification
\[
	D_{0 \lambda} = E^k/D_\lambda \cong E^{k_1}/E\times \cdots \times E^{k_n}/E
\]
where $k_1,\ldots,k_n$ are the sizes of the blocks in the partition $\lambda$, while $D_{\lambda k} \cong E^n/E$.

From Theorem~\ref{thm.mbs.bhs} and the fact that diagonal
$D_\lambda$ is contained in $D_\mu$
if and only if $\mu$ refines $\lambda$ (in which
case we write $\mu \leq \lambda$), the codimension $d$ boundary faces of
$\MB(E^k/E)$ have the form
\begin{equation}
	N_{\lambda_1}\cap \cdots \cap N_{\lambda_d} \cong \MB(D_{0\lambda_1})\times \FB(D_{\lambda_1 \lambda_2})\times \cdots \times \FB(D_{\lambda_{d-1} \lambda_d})
	\label{E:reduced_config_faces}
\end{equation}
for totally ordered chains $0 \leq \lambda_1 < \cdots < \lambda_d < k$. Here the many body
structure on $D_{\lambda \mu} = D_\lambda/D_\mu$ is understood to be the set
$\set{D_{\kappa \mu} : \mu \leq \kappa \leq \lambda}$.

In particular, the free boundary $\FB(E^k/E) = N_0$ has as its boundary hypersurfaces
the spaces $N_0 \cap N_\lambda \cong \FB(D_{0 \lambda})$.

\subsection{Divergent Sequences of Configurations}\label{sec.div.sqn}

The motivation for compactifying $E^k$ this way comes from the discussion of
divergent sequences of monopoles in the Introduction.  There we noted
that divergence of a sequence in $\cM_k$ always corresponds to
divergent configurations of points, which are essentially the centers of monopoles of lower charge.  Let
us consider the role of compactifications of $E^k/E$ in handling such
divergent sequences of configurations in the case $k=3$.

If $((p_1^s, p_2^s, p_3^s) : s \in \bN)$ is a divergent sequence in $E^3/E$,
then after passing to a subsequence it has a limit in the interior of some
boundary face of $\MB(E^3/E)$, which we suppose for simplicity has codimension
one. In the case that the mutual separations $\abs{p^s_i - p_j^s}$ all diverge
as $s \to \infty$, then the limit lies on the free boundary $\FB(E^3/E)$, the
interior of which is identified with the sphere $\pa \overline{E^3/E}$.
This limit encodes the relative displacements
$\lim_{s\to\infty}\frac{p^s_i-p_j^s}{\abs{p_i^s-p_j^s}}$ up to overall
translation. (This can be done explicitly, for example, by using translation freedom to set $p_3^s = 0$.)

On the other hand, if one of the separations remains bounded, say $\abs{p_1^s -
p_2^s} < \infty$, then the sequence remains in a neighbourhood of the diagonal
$D_{\lambda k}$, where $\lambda$ is the partition $\set{\set{1,2},\set{3}}$, and the limit lies
on the boundary hypersurface $N_\lambda = \MB(D_{0 \lambda}) \times
\FB(D_{\lambda k})$. We may think of this as the {\em 2 particle cluster} $(p_1^s,
p_2^s)$ diverging from the third particle $p_3^s$.  In this situation, the
sequence of relative configurations $w^s = (w_0^s, w_1^s) = (\tfrac 1 2 (p_1^s +
p_2^s), p_3^s) \in E^2/E$ of their centers of mass converges to a limit in
$\FB(D_{\lambda,k})$, under the identification of $D_{\lambda k}$ with $E^2/E$, while the (recentered) cluster $\mathbf{p}^s = (p_1^s - w_0^s, p_2^s - w_0^s)$
converges to a limit $\mathbf p \in E^2/E \cong D_{0 \lambda}$, the interior of $\MB(D_{0 \lambda})$. Thus
\[
	(p_1^s,p_2^s,p_3^s) \cong (w_0^s, w_1^s, \mathbf{p^s}) \to (w, \mathbf p) \in \FB(D_{\lambda k}) \times \MB(D_{0 \lambda}) = N_\lambda,
\]
where $w = \lim_{s \to \infty} \tfrac{w_0^s - w_1^s}{\abs{w_0^s - w_1^s}}$.

Note that were we to use the radial compactification only, we would only retain the limit
$w$ on the boundary of $D_{\lambda k}$ inside $\pa \overline{E^3/E}$.  The
information about the relative limiting configuration $\mathbf p$ of the 2
particle cluster would be lost.

In our compactification of $\cM_k$, the relative configurations $w^s =
(w_1^s,\ldots,w_n^s)$ of points in $E^n/E$ will be retained, but the role of an
$n$-particle cluster in the preceding discussion will be replaced by a charge $n$
monopole.

\subsection{Unordered Configuration Spaces}\label{sec.sym.mbs}

The symmetric group $\Sigma_k$ acts on $E^k$ and $E^k/E$ by
permutation of the factors.  The quotient spaces are singular, and
we shall not consider them directly.  However, the action is free on
sufficiently small collar neighbourhoods $U$ of $\FB(E^k/E)$ and we need
to understand $U/\Sigma_k$, essentially because the configurations of
points that emerge from divergent sequences of monopoles are
unordered.  More precisely, if a monopole of charge $k_j$ is attached to
$p_j$, then points $p_j$ carrying monopoles of the {\em same} charge
must be regarded as indistinguishable.

Recall
that the boundary hypersurfaces of $\FB : = \FB(E^k/E)$ are labelled by
partitions $\lambda$, $0 < \lambda < k$. For such $\lambda$ we have the
subgroup $\Sigma_\lambda$,
\begin{equation}
\sigma \in \Sigma_\lambda \Leftrightarrow \sigma \mbox{ leaves
}D_\lambda\mbox{ invariant.}
\end{equation}
Then $\Sigma_\lambda$ is the natural goup of symmetries acting on
$(E^k / E)/D_\lambda$ and the stabilizer of the generic point is just
$\{1\}$.
Combinatorially,
\[
	\Sigma_\lambda = \{\sigma \in \Sigma_k : i \sim_\lambda j
        \Leftrightarrow \sigma(i)\sim_\lambda \sigma(j)\}, \;
	\Stab_{\Sigma_k}(D_\lambda) = \{\sigma \in \Sigma_k : \sigma(i)
        \sim_\lambda i\}.
\]
Here we have written $i\sim_\lambda j$ to mean that $i$ and $j$ are
in the same block of $\lambda$.  The group
\begin{equation}\label{def.sigma.lambda}
  \Sym_\lambda = \Sigma_\lambda/\Stab_{\Sigma_k}(D_\lambda)
\end{equation}
is the group of symmetries of the diagonal $D_\lambda$.  Informally,
we think of this as the group of symmetries of configurations of type $\lambda$:
if $\row(\lambda) = n$, then
$\Sigma_n$ acts on the $n$ blocks of $\lambda$ and $\Sym_\lambda$ is
the subgroup in which two blocks can be switched only if they have the
same size.  From this description, it is clear that if $n_j$ denotes
the number of blocks of size $j$, then $\Sym_\lambda$ is the product
of symmetric groups $\Sigma_{n_1}\times \cdots \Sigma_{n_k}$.   In
particular $\Sym_0 = \Sigma_k$ and $\Sym_k = \{1\}$.

Note that the action of $\Sigma_k$ upon $\FB(E^k/E)$ is free, and the
quotient provides a good definition of unordered ideal configurations
of $k$ points in $E$.  We may choose a $\Sigma_k$-invariant product
neighbourhood $U$ of $\FB(E^k/E)$ on which the action is still free (it
suffices to stay away from all diagonals).  Then $U/\Sigma_k$ is a
space of widely-separated (and ideal) unordered configurations of $k$
points in $E$.

The boundary hypersurfaces of $\MB(E^k/E)/\Sigma_k$ and $\FB(E^k/E)/\Sigma_k$ are labelled by
the types $[\lambda]$ of partitions of $k$. In order to describe the
boundary hypersurface $N_a$ of $\MB(E^k/E)/\Sigma_k$ (which are both orbifolds, though their singularities will be of no concern here), pick $\lambda$ such that
$[\lambda]=a$. The subgroup $\Sigma_\lambda$ then acts on the boundary
hypersurface $N_\lambda$ and $N_a = N_\lambda/\Sigma_\lambda$.  This
quotienting can be carried out in two stages, corresponding to the
exact sequence
\begin{equation}\label{e1.21.9.18}
\{1\} \to \Stab_{\Sigma_k}(D_\lambda) \to \Sigma_\lambda \to
\Sym_\lambda \to \{1\}
\end{equation}
and the fibred structure of $N_\lambda$
\begin{equation}\label{e1.8.9.18}
N_\lambda = \MB(D_{0\lambda}) \times \FB(D_{\lambda k}) \to
\FB(D_{\lambda k}),
\end{equation}
(see Theorem~\ref{thm.mbs.bhs}).
It is clear that the subgroup
$\Stab_{\Sigma_k}(D_\lambda)$ of $\Sigma_\lambda$ consists precisely
of those permutations which cover the identity on the base
$\FB(D_{\lambda k})$ and the quotient is
\begin{equation}\label{e11.21.9.18}
(\MB(D_{0 \lambda})/\Stab_{\Sigma_k}(D_\lambda)) \times
\FB(D_{\lambda k}).
\end{equation}
The quotient group $\Sym_\lambda$ in \eqref{e1.21.9.18} is the natural
symmetry group of \eqref{e11.21.9.18}; taking the quotient
gives back $N_a = N_\lambda/\Sigma_\lambda$.

Let us consider the `relative version' of this where $D_{0 k}$ is
replaced by $D_{0 \nu} =D_0/D_\nu$.  Then $\Sigma_k$ is replaced by
$\Sigma_\nu$.  The boundary hypersurfaces of $\MB(D_{0 \nu})$
are labelled by partitions $\kappa$ with $0\leq  \lambda< \nu$.
Define $\Sigma_{\lambda\nu}$ to be the subgroup of $\Sigma_\nu$
which leaves the flag
\begin{equation}\label{e12.21.9.18}
D_\nu \subset   D_\lambda
\end{equation}
invariant, and denote by
$\Sym_{\lambda\nu}$ the quotient
$\Sigma_{\lambda\nu}/\Stab_{\Sigma_k}(D_\lambda)$,
where $\Stab_{\Sigma_k}(D_\lambda)$ is the stablizer of the flag.

When we divide $\MB(D_0/D_\nu)$ by $\Sigma_\nu$, the boundary
hypersurfaces are labelled by the $\Sigma_\nu$-orbits of
partitions $\lambda<\nu$.  Then $\Sigma_{\lambda\nu}$ acts on
the hypersurface $N_{\lambda\mu} = \MB(D_0/D_\lambda) \times
\FB(D_\lambda/D_\nu)$, with quotient the boundary hypersurface
\begin{equation}
N_{[\lambda\nu]} = N_{\lambda\nu}/\Sigma_{\lambda\nu}
\end{equation}
Again, this quotient can be performed in two stages, dividing first by
$\Stab_{\Sigma_k}(D_\lambda)$ and then by $\Sym_{\lambda\nu}$.  The
construction can be generalized in straightforward fashion to corners
of higher codimension in these many-body spaces.

\section{IBF Structures and Compatible \texorpdfstring{$\mathbf{\Phi}$}{Phi}-Metrics}\label{sec.geometry}

In this section we recall some definitions and terminology which
provide a general framework within which we shall describe the
structure of the compactification $\Mon_k$ of $\cM_k$.

The fibred structure of the boundary hypersurfaces of $\Mon_k$ (and
the way the fibrations fit together at the corners) is an example of an
{\em iterated boundary fibration structure} \cite{gpaction}.  Here is the definition:


\begin{dfn}\label{def.mrs} We say that $M$ has an {\em iterated boundary fibration}
    (IBF) if
\begin{enumerate}
\item[(i)] $M$ is a manifold with corners, with boundary hypersurfaces denoted $N_\lambda$, for $\lambda$ in some index set $I$;
\item[(ii)] Every boundary hypersurface $N_\lambda$ of $M$ is equipped with a
  fibration $\phi_\lambda: N_\lambda \to B_\lambda$, where $B_\lambda$ and the fibre $F_\lambda$ of $\phi_\lambda$ are manifolds
  with corners;
\item[(iii)]  If $N_\lambda\cap N_\mu\neq \emptyset$,
then $\dim B_\lambda \neq \dim B_{\mu}$.  If without loss of generality
\footnote{In \cite{gpaction} the inequality is equivalently expressed
in  terms of the dimension of the fibres rather than the dimension of
the base} $\dim B_\lambda > \dim
B_{\mu}$, then $\phi_{\lambda}(N_\lambda\cap N_\mu)$ is a disjoint union of boundary hypersurfaces of $B_{\lambda}$
with full fibre $F_\mu$,
and $\phi_{\mu}$ maps $N_\lambda\cap N_\mu$ surjectively to the base $B_{\mu}$, with fibre a boundary hypersurface (or disjoint union thereof) of $F_{\mu}$.  Finally
there is a fibration $\phi_{\lambda,\mu} : \phi_{\lambda}(N_\lambda\cap N_\mu) \to B_{\mu}$
which satisfies the compatibility condition
\[
  \begin{tikzcd}
    N_\lambda\cap N_\mu \arrow{r}{\phi_{\lambda}} \arrow[swap]{dr}{\phi_{\mu}} &
    \phi_{\lambda}(N_\lambda\cap N_\mu) \arrow{d}{\phi_{\lambda,\mu}} \\
     & B_{\mu}
  \end{tikzcd}
\]
\end{enumerate}
\end{dfn}

\begin{rmk}
We have phrased the definition slightly differently from
\cite[Definition~3.3]{gpaction} by avoiding the notion of `collective
boundary hypersurface'.  Recall that by definition a boundary
hypersurface of a manifold with corners is {\em connected}.  However,
$N_\lambda\cap N_\mu$ need not be connected and so it would be too much to
assume in part (iii) of the definition that
$\phi_\lambda(N_\lambda\cap N_\mu)$ is a single boundary hypersurface of $B_\lambda$.
\end{rmk}

\begin{rmk}
Iterated boundary fibrations arise naturally in resolving smooth group
actions on manifolds \cite{gpaction}, resolving
stratified pseudomanifolds, \cite{sigpack, fibcorn} and in compactification of QALE and QAC
spaces \cite{qac}.   It is the latter applications that are the most relevant
here.  The resolved many-body compactifications of Vasy \cite{vasymb} are also
highly relevant examples, though the above formal definition was not
discussed there.   Unfortunately, the notational conventions and
terminology vary slightly between these references.
\end{rmk}

If $M$ has an IBF structure, then
there is a natural partial order on $I$ defined by the condition:
\begin{equation}\label{eq.I.po-set}
\lambda < \mu \Longleftrightarrow N_\lambda \cap N_\mu \neq\emptyset \mbox{
  and }\dim F_\lambda < \dim F_\mu.
\end{equation}
This ordering gives a notion of `depth', where $N_\mu$ is of greater
depth than $N_\lambda$ if $\lambda < \mu$.   Every corner of $M$ of
codimension $m$ is
then a connected component of an intersection of $m$ boundary
hypersurfaces, and such intersections correspond precisely to a chain
of length $m$ in the partially ordered set $I$.

\begin{rmk}\label{rmk.bhs.mrs} From the definition of IBF, if $N_\mu$ is any
boundary hypersurface, then its boundary hypersurfaces are of two
kinds: Firstly, those that are connected components of $N_\lambda \cap
N_\mu$ with $\lambda < \mu$ fibre over $B_\mu$ and fit into the picture
\begin{equation}
  \begin{tikzcd}
    F_{\lambda\mu} \arrow{r} &  N_\lambda\cap N_\mu \arrow{d}{\phi_{\mu}} \\
     & B_\mu
  \end{tikzcd}
\end{equation}
obtained by restricting $\phi_\mu$ to
a connected component of $N_\lambda \cap N_\mu$ and where $F_{\lambda\mu}$ is a disjoint union of boundary hypersurfaces of $F_\mu$. The other kind is a
connected component of $N_\mu \cap N_\nu$, where $\mu < \nu$ and is
the total space of a connected component of the fibration
\begin{equation}
  \begin{tikzcd}
    F_\mu \arrow{r} &  N_\mu\cap N_\nu
     \arrow{d}{\phi_{\mu}} \\
     & B_{\mu\nu}
  \end{tikzcd}
\end{equation}
again obtained by restricting $\phi_\mu$, where this time $B_{\mu\nu}$ is a disjoint union of boundary hypersurfaces of $B_\mu$.
Thus the boundary hypersurfaces of $F_\mu$ are (connected components
of) the $F_{\lambda\mu}$ with $\lambda < \mu$ and the boundary
hypersurfaces of $B_\mu$ are (connected components of) the
$B_{\mu\nu}$ with $\mu < \nu$, and together they give the boundary hypersurfaces of $N_\mu$. We shall stick to this notation in what follows.

We also observe that Definition~\ref{def.mrs} induces an IBF structure on each fibre $F_\lambda$and base space $B_\lambda$, the boundary hypersurfaces of which are indexed by $\set{\mu \in I :\mu < \lambda}$ and $\set{\mu \in I : \mu > \lambda}$, respectively.
\end{rmk}

\begin{ex}[Many-body Spaces]\label{sec.mbs.mrs}

As an instructive example, let us consider how the many-body compactification $\MB(V^k/V)$ is endowed with an iterated boundary fibration.
As noted above, the boundary hypersurfaces $N_\lambda$ of $\MB(V^k/V)$ are indexed by partitions $\lambda$ and have the form
\[
	N_\lambda = \FB(D_{\lambda k})\times\MB(D_{0 \lambda})
\]
Either of the two factors work as the bases of the boundary fibrations, as long as we make a consistent choice; in light of the construction of $\Mon_k$ (and
as suggested by the notation), we take $\phi_\lambda$ to be the projection onto $B_\lambda := \FB(D_{\lambda k})$, with fibre $F_\lambda := \MB(D_{0 \lambda})$.

If $\lambda < \mu$, then
\[
	N_\lambda \cap N_\mu = B_\mu \times X_{\lambda \mu} \times F_{\lambda} := \FB(D_{\mu k}) \times \FB(D_{\lambda \mu}) \times \MB(D_{0 \lambda}).
\]
The compatibility condition between the fibrations $\phi_\mu$ and $\phi_\lambda$ reads
 \[\begin{tikzcd}
    N_\lambda \cap N_\mu \arrow[r,phantom,"{=}"] &
    B_\mu \times X_{\lambda \mu} \times F_\lambda \arrow{r}{\phi_\lambda} \arrow[start anchor={[xshift=-5ex]},swap]{dr}{\phi_\mu} &
    B_\mu \times X_{\lambda \mu} \arrow{d}{\phi_{\lambda \mu}} \arrow[r, phantom, "{=}"] &
    \phi_\lambda(N_\lambda \cap N_\mu) \\
    & & B_\mu
  \end{tikzcd}\]
and we identify $B_\mu \times X_{\lambda \mu}$ as a boundary face of $B_\lambda$, and likewise $X_{\lambda \mu} \times F_\lambda$ as a boundary
face of $F_\mu$.
\end{ex}

\subsection{Product Structures and Construction of an Adapted Cover}
\label{sec.construction.cover}

\begin{dfn}\label{dfn:bps}  Let $M$ be a compact manifold with IBF
  and boundary hypersurfaces indexed as above. Then
  a {\em boundary product structure} consists of the
  following data.
For each boundary hypersurface $N_\lambda$, an open neighbourhood $U_\lambda$, a smooth
boundary-defining   function $\rho_\lambda$  and a smooth vector field
$v_\lambda$ defined in $U_\lambda$, such that, for any pair $\lambda \neq \mu$,
\begin{equation}
v_\lambda\rho_\lambda = 1\mbox{ but }
v_\lambda \rho_{\mu} =
0 \mbox{ in }U_\lambda\cap U_\mu
\end{equation}
and
\begin{equation}
[v_\lambda,v_\mu] = 0 \mbox{ in }U_\lambda\cap U_\mu.
\end{equation}
Such a boundary product structure is said to be {\em compatible} with the
IBF if for each pair $N_\lambda$, $N_\mu$ of intersecting
hypersurfaces with $\lambda < \mu$,
\begin{eqnarray}
\rho_\mu|N_\lambda &\in& \phi^*_\lambda C^\infty(B_\lambda)\mbox{ near }N_\mu \\
v_\mu|N_\lambda &&\mbox{ is $\phi_\lambda$-related to a vector field on }B_\lambda\mbox{
  near }N_\mu \\
v_\lambda|N_\mu &&\mbox{ is tangent to the fibres of $\phi_\mu$}.
\end{eqnarray}
\end{dfn}

It is shown in \cite[Prop.~1.2, 3.7]{gpaction} that such
compatible boundary product structures always exist.   The argument
is inductive, a key point being that if $M$ is a manifold with
IBF, then in the fibration $\phi_\lambda : N_\lambda
\to B_\lambda$ the base $B_\lambda$ (and the fibre $F_\lambda$)
inherit an IBF structure by virtue of the definition.

 Notice that the
flow of the vector field $v_\lambda$ defines a retraction of $U_\lambda$ to $N_\lambda$ and
so a diffeomorphism from a set of the form $\{\rho_\lambda < \delta\}$ onto
$N_\lambda \times [0,\delta)$.

Let us construct a similar type of compatible boundary structure where
the covering sets $W_\lambda$ are not product neighbourhoods of the
$N_\lambda$ but still of a very useful form:

\begin{prop}\label{thm:ibf.cover} Let $M$ be a compact MWC with IBF and compatible boundary product structure. There is a cover $\{W_\lambda\}$ of a neighbourhood of $\partial M$ so that the restriction of $\phi_\lambda$ to $W_\lambda \cap N_\lambda$ is
surjective, each fibre of $\phi_\lambda$ meets $W_\lambda$ in
a relatively compact subset of $F_\lambda^\circ$ and $W_\lambda \cap
W_\mu = \emptyset$ if $\lambda$ and $\mu$ are not comparable in $I$.
\end{prop}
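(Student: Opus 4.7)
The plan is to define each $W_\lambda$ by cutting a ``notch'' from the collar neighborhood $\{\rho_\lambda<\delta_0\}$ by forbidding $\rho_\mu\le\delta_1$ for every $\mu$ strictly below $\lambda$ in the partial order, for a two-scale choice $\delta_1\ll\delta_0$. The disjointness of the $W_\lambda$ for incomparable indices will be forced by $\delta_0$, while the relative compactness of fibrewise intersections and the surjectivity of $\phi_\lambda|_{W_\lambda\cap N_\lambda}$ onto $B_\lambda$ will be controlled by $\delta_1$.

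First, I would choose $\delta_0>0$ small enough that $\{\rho_\lambda<\delta_0\}\cap\{\rho_\mu<\delta_0\}=\emptyset$ for every pair of incomparable $\lambda,\mu\in I$. By Definition \ref{def.mrs}(iii), two intersecting hypersurfaces $N_\lambda, N_\mu$ with $\lambda \neq \mu$ must have $\dim B_\lambda\neq\dim B_\mu$; since $\dim N_\lambda=\dim N_\mu=\dim M-1$, this forces $\dim F_\lambda\neq\dim F_\mu$, so $\lambda$ and $\mu$ are comparable. Equivalently, incomparable indices have disjoint boundary hypersurfaces. Since $M$ is compact and $I$ is finite, the function $\max(\rho_\lambda,\rho_\mu)$ is bounded below by a common positive constant for all incomparable pairs, so a $\delta_0$ with the required property exists and condition (3) will be automatic for any $W_\lambda\subset\{\rho_\lambda<\delta_0\}$.

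Next I would choose $\delta_1\in(0,\delta_0/2)$ to handle conditions (1) and (2) simultaneously. By Remark \ref{rmk.bhs.mrs}, the boundary hypersurfaces of the compact fibre $F_\lambda=\phi_\lambda^{-1}(b)$ are indexed by those $\mu<\lambda$, each contained in $\{\rho_\mu=0\}$, so $F_\lambda^\circ\subset\bigcap_{\mu<\lambda}\{\rho_\mu>0\}$. Consequently the function
\[
h_\lambda(b):=\max_{q\in\phi_\lambda^{-1}(b)}\ \min_{\mu<\lambda}\rho_\mu(q)
\]
is continuous and strictly positive on the compact base $B_\lambda$, and hence admits a positive lower bound $c_\lambda>0$. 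Setting $\delta_1<\tfrac12\min_\lambda c_\lambda$ and
\[
W_\lambda:=\{\rho_\lambda<\delta_0\}\,\cap\,\bigcap_{\mu<\lambda}\{\rho_\mu>\delta_1\},
\]
one has: for each $b\in B_\lambda$ there exists $q\in\phi_\lambda^{-1}(b)$ with $\rho_\mu(q)>\delta_1$ for every $\mu<\lambda$, giving $\phi_\lambda(W_\lambda\cap N_\lambda)=B_\lambda$ (condition (1)); and $W_\lambda\cap\phi_\lambda^{-1}(b)$ sits inside the closed subset $\bigcap_{\mu<\lambda}\{\rho_\mu\ge\delta_1\}$ of the compact fibre $F_\lambda$, hence is relatively compact in $F_\lambda^\circ$ (condition (2)).

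Finally, to see that $\bigcup_\lambda W_\lambda$ covers the neighborhood $\{\min_\lambda\rho_\lambda<\delta_0/2\}$ of $\partial M$, take any $p$ in this set and put $J(p):=\{\lambda:\rho_\lambda(p)<\delta_0/2\}$; by the first step any two elements of $J(p)$ are comparable, so $J(p)$ is a nonempty chain. Letting $\lambda_\ast$ be its minimum, any $\mu<\lambda_\ast$ in $I$ must lie outside $J(p)$ by minimality, so $\rho_\mu(p)\ge\delta_0/2>\delta_1$ and $p\in W_{\lambda_\ast}$. The main delicate step is the uniform choice of $\delta_1$, which rests on the continuity and positivity of $h_\lambda$ on the compact base $B_\lambda$; everything else is routine bookkeeping with the partial order and compactness.
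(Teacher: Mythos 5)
Your proof is correct, but it takes a genuinely different route from the paper's. The paper constructs the sets $W_\lambda$ \emph{inductively} on the partial order: for minimal $\lambda$ the fibre $F_\lambda$ is closed (by Remark~\ref{rmk.bhs.mrs}), so one takes $W_\lambda = U_\lambda$ outright; then for a minimal $\mu_0$ in the complement of the already-handled indices, one uses the fact that the previously built $W_\lambda \cap N_{\mu_0}$ cover the boundary of each fibre of $\phi_{\mu_0}$, chooses a fibrewise-relatively-compact open set $W'_{\mu_0} \subset N_{\mu_0}$ completing the cover of $N_{\mu_0}$, and flows it into $M$ along $v_{\mu_0}$. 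Your construction is instead a \emph{direct two-scale} one: you choose a single $\delta_0$ forcing disjointness of incomparable collar neighbourhoods, and a single $\delta_1 \ll \delta_0$ forcing fibrewise relative compactness and surjectivity, with each $W_\lambda = \{\rho_\lambda<\delta_0\}\cap\bigcap_{\mu<\lambda}\{\rho_\mu>\delta_1\}$. Your approach has the virtue of giving a uniform explicit formula, avoids the induction, and uses only the boundary defining functions $\rho_\lambda$ rather than also the retraction vector fields $v_\lambda$; the paper's induction, by contrast, avoids the need to establish the uniform positivity of $h_\lambda$ and stays closer to the local product structure. One small technical point you should make explicit: in Definition~\ref{dfn:bps} the $\rho_\mu$ are only defined on $U_\mu$, so for $\bigcap_{\mu<\lambda}\{\rho_\mu>\delta_1\}$, $h_\lambda$, and the chain $J(p)$ to make sense globally, you should first replace each $\rho_\mu$ by a globally defined smooth boundary defining function agreeing with it near $N_\mu$ (or declare it to equal a fixed positive constant outside $U_\mu$). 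This is standard and does not affect any of your estimates, but without it expressions such as $\min_{\mu<\lambda}\rho_\mu(q)$ are not defined at points outside $\bigcap_{\mu<\lambda}U_\mu$.
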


\begin{proof} Minimal elements in $I$ need not be unique, but the corresponding
boundary hypersurfaces must be disjoint.  If $\lambda$ is a minimal
element of $I$ then the fibre $F_\lambda$ of $\phi_\lambda$ must be a compact
boundaryless manifold by Remark \ref{rmk.bhs.mrs}. For such $\lambda$, we take $W_\lambda$ to be a
product neighbourhood $U_\lambda$ of $N_\lambda$, as per the above
definition.  And naturally we may assume (and will do so) that these $W_\lambda$ are disjoint.

Now let $\Lambda \subset I$ be some subset with the property
$$
\mu \in \Lambda,\; \lambda < \mu \Rightarrow \lambda \in \Lambda.
$$
Suppose that the $W_\lambda$ have been constructed for all $\lambda
\in \Lambda$.   We can now construct $W_{\mu_0}$ for any minimal
element $\mu_0$ of $I\setminus \Lambda$ as follows.  The minimality
condition means that if $\lambda < \mu_0$, then $\lambda \in
\Lambda$.  Hence the intersections $W_\lambda \cap N_{\mu_0}$ for $\lambda
< \mu$ cover the union of the $N_\lambda \cap N_{\mu_0}$ for $\lambda
<\mu_0$ and in particular the entire boundary of each fibre
$F_{\mu_0}$, again by Remark \ref{rmk.bhs.mrs}.  Because the base is compact, we can choose an open
subset $W^\p_{\mu_0}$ of $N_{\mu_0}$ which meets each fibre in a
relatively compact open subset such that the union of $W^{\p}_{\mu_0}$ with all
the intersections $W_\lambda \cap N_{\mu_0}$ covers $N_{\mu_0}$.  Now
use $v_{\mu_0}$ to push $W^{\p}_{\mu_0}$ out into $M$ and denote this
product neighbourhood of $W^{\p}_{\mu_0}$ by $W_{\mu_0}$.  Provided we
don't push out too far, this set will not intersect any $W_\lambda$
with $\lambda$ not comparable to $W_{\mu_0}$.  This completes the
inductive step.
\end{proof}

\subsection{\texorpdfstring{$\mathit{\Phi}$}{Phi}-Tangent Bundle}

Let $M$ be a compact manifold with an IBF. Suppose that
$\{\rho_\lambda\}$ is a collection of compatible boundary-defining functions
and let $\rho$, the total boundary-defining function, be the product of
the $\rho_\lambda$.

Recall that on any MWC $M$,  $\Vect_{\bo}(M)$ is the space of all
smooth vector fields which are tangent to all boundary hypersurfaces.
There is a vector bundle $\bT M \to M$, whose restriction to the
interior is canonically isomorphic to $TM^\circ$, and such that
$C^\infty(M,\bT M) = \Vect_{\bo}(M)$.

\begin{dfn}  The vector field $v \in \Vect_{\bo}(M)$ is called a
{\em $\Phi$-vector field} if
\begin{itemize}
\item $v|N$ is tangent to the fibres of $\phi_N$, or equivalently
  $(\phi_N)_*v =0$ for every boundary hypersurface $N$;
\item $v(\rho) \in \rho^2C^\infty(M)$.
\end{itemize}
Roughly speaking, $v$ is bounded in the fibre directions and scales in an asymptotically conic
fashion in the base directions.
The space of all $\Phi$-vector fields will be denoted by $\Vect_{\Phi}(M)$ and as in the case of $b$-vector fields there is a $C^\infty$ vector bundle $\fT M$ with the property that $C^\infty(M,\fT M) = \Vect_{\Phi}(M)$.
\end{dfn}

\begin{rmk}
The first item does not depend upon the choice of $\rho$, but the
second one does. $\Vect_\Phi(M)$ is the same as the
algebra of vector fields defined in \cite{qac}, we have given a
different but equivalent definition.  In \cite{fibcorn} there is a
closely related definition which is, however, {\em not} equivalent.

We have chosen to call these $\Phi$-vector fields over `QFB' vector
fields because the present notion is the natural generalization of the
initial use of `$\Phi$' in Mazzeo--Melrose \cite{MM_FB} to manifolds with
fibred boundary.  When $M$ has a single boundary hypersurface
$N$, we recover the original notion.
\end{rmk}

\begin{dfn}  Given a compact manifold with IBF, a
  $\Phi$-metric is a smooth (up to all boundary hypersurfaces) metric
  on the bundle $\fT M$.
\end{dfn}

\begin{rmk}
It follows easily from the definition that
the restriction of a $\Phi$ vector field (resp. $\Phi$-metric) to a fibre $F_\lambda$
of a boundary hypersurface of $M$ is again a $\Phi$ vector field (resp.\ $\Phi$-metric)
on $F_\lambda$, with respect to its IBF structure induced from that on $M$.
\end{rmk}

\begin{prop}\label{prop.mrs.bounded.cover}
Suppose that $M$ is a compact manifold with IBF and let $g$ be a compatible $\Phi$-metric on $M$.
Then, denoting the boundary fibrations etc.~as before, there exists a finite open cover $\{V_0\} \cup \{V_\lambda\}_{\lambda \in I}$ of $M$ and a partition of unity $\chi_\lambda$ subordinate to this cover so that, for $\lambda \neq 0$, $\phi_\lambda(V_\lambda \cap \p M)=B_\lambda$, $V_\lambda \cap \phi_\lambda^{-1}(\{p\}) \subset F_\lambda^\circ$ is relatively compact, and such that all $|\nabla \chi_\lambda|_g$ are uniformly bounded.
\end{prop}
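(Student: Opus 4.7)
The plan is to reduce the statement to Proposition~\ref{thm:ibf.cover} by constructing a standard smooth partition of unity subordinate to an appropriate finite cover, and then observing that on a compact manifold with corners equipped with a $\Phi$-metric, the differential of any smooth function is automatically uniformly bounded in the dual $\Phi$-metric. Thus the only substantive work will be to assemble the cover; the gradient bound will be essentially formal.

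Concretely, I would first apply Proposition~\ref{thm:ibf.cover} to produce an open cover $\{W_\lambda\}_{\lambda\in I}$ of a neighbourhood of $\partial M$ with $\phi_\lambda(W_\lambda \cap N_\lambda) = B_\lambda$ and $W_\lambda \cap \phi_\lambda^{-1}(\{p\})$ relatively compact in $F_\lambda^\circ$. Then I would shrink each $W_\lambda$ to $W'_\lambda$ with $\overline{W'_\lambda} \subset W_\lambda$ in such a way that $\bigcup_\lambda W'_\lambda$ still covers the same neighbourhood of $\partial M$, choose $V_0$ with $\overline{V_0} \subset M^\circ$ so that $V_0 \cup \bigcup_\lambda W'_\lambda$ covers $M$, and set $V_\lambda := W_\lambda$ for $\lambda \in I$. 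The geometric conditions on $V_\lambda$ then follow directly from Proposition~\ref{thm:ibf.cover}. Finally, I would take any smooth partition of unity $\{\chi_\lambda\}$ subordinate to $\{V_0\} \cup \{V_\lambda\}_{\lambda \in I}$, which exists by standard bump-function constructions on compact manifolds with corners.

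To see the gradient bound, recall that the canonical bundle map $\fT M \to TM$ is an isomorphism over $M^\circ$ and smooth up to $\partial M$, so its dual $T^*M \to \fT^*M$ is also a smooth bundle map. In local coordinates $(\rho,y,z)$ near a codimension-one boundary with bdf $\rho$, base coordinate $y$ and fibre coordinate $z$, this map sends $d\rho \mapsto \rho^{2}(\rho^{-2}d\rho)$, $dy^i \mapsto \rho(\rho^{-1}dy^i)$ and $dz^j \mapsto dz^j$ expressed in the natural $\Phi$-dual basis, and an analogous expression holds at higher-codimension corners. Hence every $d\chi_\lambda \in C^\infty(M;T^*M)$ is a smooth section of $\fT^*M$, and since $g$ is smooth on $\fT M$ up to $\partial M$ and $M$ is compact, each $|\nabla\chi_\lambda|_g = |d\chi_\lambda|_g$ is bounded, uniformly so because $I$ is finite. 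The only step demanding real care is the combinatorial shrinking of the cover, which is handled by a standard compactness argument applied in the inductive construction from Proposition~\ref{thm:ibf.cover}; there is no analytic obstacle.
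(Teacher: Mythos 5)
Your proposal is correct and follows essentially the same route as the paper: build the cover from Proposition~\ref{thm:ibf.cover} together with an interior set $V_0$, take any smooth partition of unity subordinate to it, and then observe that $\rd\chi_\lambda$ is automatically a smooth section of $\fT^*M$, so $|\rd\chi_\lambda|_g$ is a smooth function on the compact manifold $M$ and hence bounded. Your extra shrinking step and the explicit coordinate description of the map $T^*M \to \fT^*M$ are fine but not needed; the paper simply invokes the fact that a smooth $1$-form is a smooth section of $\fT^*M$.
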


We have already constructed a cover $\{W_\lambda\}$ of $\p M$ with the some of the required properties, cf.\ Proposition \ref{thm:ibf.cover}. Thus we only need to extent this to cover all of $M$ and show that any partition of unity subordinate to such a cover has uniformly bounded derivatives. But this does not depend on the specific choice of cover and is true on more general grounds:

\begin{proof} So let $M$ be a compact MWC with IBF, compatible $\Phi$-metric $g$ and boundary fibrations denoted as before, i.e., $\phi_\lambda : N_\lambda \longrightarrow B_\lambda$ with fibre $F_\lambda$ where $\lambda \in I$ indexes the boundary hypersurfaces of $M$. Suppose $\{W_\lambda\}$ is the cover of $\p M$ constructed in Section \ref{sec.construction.cover}. Then, the sets $W_\lambda$ already satisfy $\phi_\lambda(W_\lambda \cap \p M) = \phi_\lambda(W_\lambda^\p) = B_\lambda$ and the sets $\phi_\lambda^{-1}(\{p\}) \cap W_\lambda$ are relatively compact in $F_\lambda^\circ$. Thus, the cover is global in the base and local in the fibres. Finally, let $V_\lambda = W_\lambda$, for $\lambda \in I$ and $V_0$ be an open neighbourhood of $M \setminus \bigcup_{\lambda \in I} W_\lambda$.

Now let $\chi_\lambda$ be a smooth partition of unity subordinate to $\{V_0\} \cup \{V_\lambda\}_{\lambda \in I}$. Since the $\chi_\lambda$ are smooth, so too are the $1$-forms $\rd \chi_\lambda$.  Any smooth $1$-form is also a smooth section of $\fT^*M$, hence the functions $|\rd \chi_\lambda|^2_g : M \longrightarrow \RR$ are smooth, compactly supported and there are only finitely many of them. Thus, they are uniformly bounded on $M$.
\end{proof}

The significance of this result is that such good covers with controlled partitions of unity are required in the proof of the coprime case of the Sen Conjecture presented in \S\ref{psen}. In particular, they allow us to use a Mayer-Vietoris argument to compute the smooth-$L^2$-de Rham cohomology of $M$.

\begin{rmk} Using the specific structure of the cover $\{V_\lambda\}$ and of $\fT^*M$, we could in fact deduce more precise information about the behaviour of $|\rd \chi_\lambda|^2_g$ near $\p M$. But this amount of detail will not be needed in what is to follow.
\end{rmk}

\section{The Moduli Space Compactification}\label{sec.mon.cpctn}
\subsection{Monopoles}\label{sec.background}

We recall the precise definition of the moduli space of framed
euclidean monopoles of charge $k$, from a point of view that makes
subsequent generalizations natural.

First of all, we shall use $\cC$ for the space of `monopole data' on a
radially compactified euclidean 3-dimensional euclidean space $E =
\ol{\RR}^3$:   thus $\cC$ consists of pairs $(A,\Phi)$ where $A$ is
an $\SU(2)$ connection on a bundle $P\to E$ and $\Phi$ is a
smooth section of the adjoint bundle. Here smooth means `up to and
including the boundary': to say $A$ is smooth is to say that in any
smooth local trivialization of $P$ the connection $1$-form is smooth
(including neighbourhoods of boundary points of $E$.  We
shall use obvious variations of the notation such as
$\cC(E,P)$ if we need to make the base space or bundle
explicit.   The gauge group $\cG$ consists of automorphisms of $P$
which again are smooth over $E$.  This group is the natural
infinite-dimensional symmetry group of $\cC$: it acts by pull-back.

The (euclidean) Bogomolny equations on $\cC$ are
\begin{equation}\label{e1.30.8.18}
\cB(A,\Phi) = \nabla_A\Phi - *F_A,
\end{equation}
where $F_A$ is the curvature of $A$ and $*$ is the euclidean Hodge
star operator. The equations are gauge-invariant and a naive
definition of the monopole moduli space would be to divide the zeros
of $\cB$ by the action of $\cG$.

The framed moduli space \cite{AH,KS} is a refinement in which we fix
$(A,\Phi)$ up to and including $O(\rho)$ terms, where $\rho$ is the
standard defining function of the boundary of the radial
compactification (reciprocal of distance from $0$).  We denote by
$(A_{\bo},\Phi_{\bo})$ these fixed data defined near the boundary and
define
\begin{equation}
\cC_{\Fr} = \{(A,\Phi) \in \cC : (A-A_{\bo},\Phi - \Phi_{\bo}) =
O(\rho^2)\}
\end{equation}
where we use the euclidean metric to measure the length of the
$1$-form $A-A_{\bo}$.   There is a corresponding framed gauge group
$\cG_{\Fr}$ consisting of those gauge transformations which preserve
$\cC_{\Fr}$ and equal to the identity on the boundary.  The framing
data $(A_{\bo},\Phi_{\bo})$ are essentially given by an abelian
monopole, and hence there is an associated topological charge $k$, the
winding number of $\Phi_{\bo}$.  We shall denote a framing of charge
$k$ by $\Fr_k$.

Consider the $1$-parameter subgroup $\gamma_t = \exp(t\Phi/2)$ of gauge
transformations. Then  $\gamma_t \in \cC_{\Fr}$.  With our factor of
$2$, $\gamma_{2\pi}|\p E=-1$ and so the 
conjugation action on $\cC$ is trivial at
the boundary. Thus $g_{2\pi}$ acts as the identity on the framed
moduli space.  So with the factor of $2$, we have an action of
$\TT = \RR/2\pi \ZZ$ on the the framed moduli space (cf.\ \cite{AH}).

\begin{dfn}
The framed moduli space of euclidean charge-$k$ monopoles is defined
as follows
\begin{equation}
\cN_k = \{(A,\Phi) \in \cC(E,P;\Fr_k) : \cB(A,\Phi)=0\}/\cG_{\Fr}.
\end{equation}
\end{dfn}

\begin{rmk}
Any two choices of framing are gauge-equivalent (though not
  by an element of $\cG_{\Fr}$).  Thus different choices of framing in
  the definition of $\cN_k$ lead to diffeomorphic framed moduli spaces.
\end{rmk}

\subsubsection{Properties of $\cN_k$}

It is known that $\cN_k$ is a smooth manifold of dimension $4k$.  It
carries a natural $L^2$ metric, $g_k$, say, which is complete and
hyperK\"ahler. The reader is referred to
\cite{AH} and references therein for these standard facts.

On $\cN_k$ there are important isometries: those induced by 
translations of $\RR^3$ and the action of $\TT$ by `frame rotation',
induced by the $1$-parameter group $t \mapsto \gamma_t$
described above.  Accordingly there is a reduced
moduli space $\cM_k=\cN_k/\RR^3$ of dimension $4k-3$.  Factoring out
by the translations is essentially the same as
restricting to monopoles with \emph{centre} at $0 \in \RR^3$. One
definition of the centre (of mass) of a monopole is in terms of the
$\TT$-action on $\cN_k$. This is a triholomorphic isometry with a
hyperK\"ahler moment map $c : \cN_k \longrightarrow \RR^3$. This map
is a submersion and $c^{-1}(0)$ is the submanifold of monopoles
centred at $0$. If $m \in \cN_k$, there is a unique $p \in \RR^3$ so
that $m(\cdot-p) \in c^{-1}(0)$. Thus the quotient $\cM_k$ and
$c^{-1}(0) \subset \cN_k$ are essentially interchangeable.

This second point of view shows that the \emph{reduced} moduli space
 \begin{equation}
  \cMo_k = \cN_k /\!\!/\!\!/ \TT = c^{-1}(0)/\TT
 \end{equation}
is again hyperK\"ahler, of dimension $4k-4$.
We can equally define $\cMo_k = \cN_k /\big(\RR^3 \times \TT\big)$, for the two actions of $\RR^3$ and $\TT$ on $\cN_k$ commute with each other.
It is known that $\pi_1(\cMo_k) = \ZZ_k$ (cf.\ \cite{AH}); we denote the universal cover by $\unicov{k}$.
The cover $\unicov{k}$ is called the space of \emph{strongly centred monopoles (of charge $k$)}.

A famous result of Donaldson \cite{skd1984,AH} shows that, after a
choice of direction in $\RR^3$, $\cN_k$ can be identified with the
space $\Rat_k$ of based rational maps of degree $k$ of $\CC P^1$ to
itself.  The basing condition restricts us to rational maps of the
form
$$
f(z) = \frac{\phi(z)}{\psi(z)}
$$
where
$$
\phi(z) = a_0 + \cdots + a_{k-1}z^{k-1},\;\;
\psi(z) = b_0 + \cdots + b_{k-1}z^{k-1} + z^k,\;\;
$$
are complex polynomials with no common factors. The coefficients
$(a_0,\ldots, a_{k-1},b_0,\ldots, b_{k-1})$ are coordinates on
$\cN_k$.  The condition that $\phi$ and $\psi$ have no common factors
is equivalent to the non-vanishing of the resultant
$$
\cR(\phi,\psi) = \prod_j \phi(\beta_j)
$$
where the $\beta_j$ are the roots of $\psi$.  $\cR$ is homogeneous of
degree $k$ in the coefficients $(a_0,\ldots,a_{k-1})$.  With this
description, $\cM_k$ is the subspace of rational maps with
$$
b_{k-1} = 0 , |\cR(\phi,\psi)| =1.
$$
The $\TT$-action is just $\phi \mapsto \lambda \phi$, $|\lambda|=1$.
The strongly centred space $\unicov{k}$ is then given by the conditions
$$
b_{k-1} = 0 , \cR(\phi,\psi) =1.
$$
The subgroup $\ZZ_k\subset \TT$ preserves these conditions
because of
the observation about the homogeneity of $\cR$ in the coefficients of
$\phi$. This $\ZZ_k$ is the group of deck transformations, and factoring out
by it gives the space $\unicov{k}/\ZZ_k = \cMo_k = \cM_k/\TT$.

Note that the metric is not easy to describe in this picture.  As
observed by Atiyah and Hitchin in \cite[p.\ 19]{AH}, the description
of $\cM^0_k$ in terms of rational maps exhibits it as a dense open subset
of $\CC P^{k-1}\times \CC P^{k-1}$.  This observation certainly gives
a compactification of $\cM_k^0$, but this will be different from ours
and is unlikely to have good properties with respect to the monopole
metric.

\subsection{Ideal Monopoles and the Boundary Hypersurfaces of
  \texorpdfstring{$\Mon_k$}{M(k)}}\label{sec.mon.bhs}

We now describe the structure of the compactification $\Mon_k$ of
$\cM_k$ in detail.  For each partition $a = (k_1,\ldots,k_n)$ of the
integer $k$, there is a boundary hypersurface $\cI_a$, and $\cI_a$
meets $\cI_b$ if and only if the partitions $a$ and $b$ are
comparable.   In order to describe $\cI_a$ it is best to choose a
partition $\nu$ of $\bk$ whose type is $a$ (so that the sizes of the
blocks of $\nu$ are the integers $k_1,\ldots, k_n$).  Then we have
\begin{equation}\label{e11.7.10.18}
\cI_a = \cI_\nu/\Sym_\nu.
\end{equation}
It is natural to think of $\cI_\nu$ as an `ordered version' of $\cI_a$.   The
ingredients needed to define $\cI_\nu$ are a compactification
$\Mon_\nu$ of the product
\begin{equation}\label{e12.7.10.18}
\cM_\nu = \cM_{k_1}\times  \cM_{k_2} \times \cdots \times \cM_{k_n}.
\end{equation}
and a certain rank-$\row(\nu)$ torus-bundle
\begin{equation}\label{e13.7.10.18}
\cT_{\nu k}  \longrightarrow \FB_{\nu k},
\end{equation}
where $\FB_{\nu k}$ is the boundary of the free region of
$\MB(D_{\nu}/D_k)$ as before.  This torus-bundle has the property that
it can be chosen to admit an action of $\Sym_{\nu}$ which permutes the
factors and covers the action of $\Sym_{\nu}$ on
$\FB_{\nu,k}$. Similarly, the $\TT^{\row(\nu)}$- and permutation-action of
$\Sym_{\nu}$ on $\cM_\nu$ extend smoothly to $\Mon_\nu$.  Using the
$\TT^{\row(\nu)}$-action, we define
\begin{equation}\label{e14.7.10.18}
\cI_\nu = \cT_{\nu k} \times_{\TT^n} \Mon_{\nu};
\end{equation}
this space inherits an action of $\Sym_\nu$, allowing us to define
$\cI_a$ as the quotient \eqref{e11.7.10.18}.   The torus-bundle
$\cT_{\nu k}$ appeared in \cite{KS} and in Bielawski's work and is
called a (generalized) Gibbons--Manton bundle; the version for the
`free' partition $(1,\ldots,1)$ appears in the original paper
\cite{gibbons1995} in the description of this asymptotic region of $\cM_k$.  We
shall recall the definition in a moment, but pause first to note that
\eqref{e14.7.10.18} is clearly incomplete without a definition of the
compactification $\Mon_{\nu}$.  This, however, has a description very
analogous to that of $\Mon_k$ itself, where the boundary hypersurfaces
are now finite quotients of spaces
\begin{equation}\label{e15.7.10.18}
\cI_{\lambda\nu} = \Mon_{\lambda} \times_{\TT^{\row(\lambda)}}
\cT_{\lambda\nu}
\end{equation}
where now $0 \leq \lambda < \nu$ and $\cT_{\lambda\nu}$ is a
  Gibbons--Manton torus bundle of rank $\row(\nu)$ over
  $\FB_{\lambda\nu}$, the boundary of the free region of
  $\MB_{\lambda\nu} =\MB(D_{\lambda}/D_{\nu})$.  Since
  \eqref{e15.7.10.18} only involves  the spaces $\Mon_{\lambda}$ with
  $\lambda<\mu$, these spaces can be
  built up inductively starting from the free partition
\begin{equation}\label{e16.7.10.18}
\Mon_0 = \cM_0 = \cM_1 \times \cdots \times \cM_1 = (S^1)^k
\end{equation}
and ending with $\Mon_k$.

In the remainder of this section, we fill in the details, summarise
the properties of the space $\Mon_\mu$ that are needed to make the
induction work.  We also verify that the collection of manifolds
$\{\cI_{\lambda\mu} : 0 \leq \lambda < \mu\}$ satisfy the
compatibility conditions needed for $\Mon_\mu$ to be a compact MWC
with IBF.

\subsection{Gibbons--Manton Bundles}  Let $\lambda < \nu$ be two
partitions of the set $\bk$.  The Gibbons--Manton bundle $\cT_{\lambda\nu}$ is defined initially
over the space
\begin{equation}\label{e20.7.10.18}
E_{\lambda\nu} = D_{\lambda}/D_{\nu} \setminus \bigcup_{\lambda < \nu
  \leq \nu} D_\nu/D_\nu.
\end{equation}
Recall that $D_\lambda \subset (E^\circ)^k$, where $E$ is our fixed
radially compactified $3$-dimensional euclidean space.  For each $1\leq i,j \leq k$, define
the difference map
\begin{equation}\label{e21.7.10.18}
\pi_{ij} = p_i - p_j\mbox{ so }\pi_{ij}: (E^\circ)^k \to E^\circ.
\end{equation}
 Notice that $\pi_{ij}$ vanishes on $D_\nu$ if and only if
 $i\sim_{\nu} j$.  Therefore,
\begin{lem}
The difference map $\pi_{ij}$ is induces a map on $E_{\lambda\nu}$ if
  and only if $i\sim_\nu j$, to be denoted by the same symbol. This
  induced map is non-zero on $E_{\lambda\nu}$ if and only if
  $i\not\sim_\lambda j$ (but $i\sim_{\nu} j$).
\end{lem}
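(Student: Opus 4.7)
The plan is to verify both halves of the claim by direct computation, exploiting the linearity of $\pi_{ij}$ and the chain of inclusions $D_\nu \subset D_{\nu'} \subset D_\lambda$ that holds whenever $\lambda < \nu' \leq \nu$ (in the convention where $\mu \leq \lambda$ means $\mu$ refines $\lambda$ and hence $D_\lambda \subset D_\mu$).

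First, for the assertion that $\pi_{ij}$ descends to $E_{\lambda\nu}$ iff $i \sim_\nu j$: since $\pi_{ij}(p) = p_i - p_j$ is linear in $p$, its restriction to $D_\lambda$ passes to the quotient $D_\lambda/D_\nu$ if and only if $\pi_{ij}(q) = 0$ for every $q \in D_\nu$. By definition of $D_\nu$, this vanishing is automatic when $i \sim_\nu j$; conversely, if $i \not\sim_\nu j$ one can produce $q \in D_\nu$ with $q_i \neq q_j$ by assigning distinct values in $E^\circ$ to the $\nu$-blocks containing $i$ and $j$, which obstructs descent. Restricting the induced map to the open subset $E_{\lambda\nu}$ gives the first equivalence.

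Second, for the assertion that the induced map is non-zero on $E_{\lambda\nu}$ iff $i \not\sim_\lambda j$ (granted $i \sim_\nu j$): if $i \sim_\lambda j$ then $p_i = p_j$ for every $p \in D_\lambda$, so the induced map is identically zero. Conversely, assuming $i \not\sim_\lambda j$, I would exhibit one point of $E_{\lambda\nu}$ where the map does not vanish. Choose a generic representative $p \in D_\lambda$ with $p_a = p_b$ iff $a \sim_\lambda b$, which is possible by assigning pairwise distinct values in $E^\circ$ to the blocks of $\lambda$; then $p_i \neq p_j$. It remains to show $[p] \in E_{\lambda\nu}$, i.e.\ $[p] \notin D_{\nu'}/D_\nu$ for every $\lambda < \nu' \leq \nu$. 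Since $D_\nu \subset D_{\nu'}$, the preimage of $D_{\nu'}/D_\nu$ under the quotient $D_\lambda \to D_\lambda/D_\nu$ coincides with $D_{\nu'}$ (using also $D_{\nu'} \subset D_\lambda$, which holds because $\lambda < \nu'$). But any such $\nu'$ has some block that strictly contains a block of $\lambda$, and the genericity of $p$ then forces $p \notin D_{\nu'}$, finishing the argument.

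The only real hurdle is bookkeeping with the partial order of partitions. Once one keeps firmly in mind that $\lambda < \nu' \leq \nu$ corresponds to the nested inclusion $D_\nu \subset D_{\nu'} \subset D_\lambda$, both the descent criterion in the first part and the preimage identification $D_{\nu'} + D_\nu = D_{\nu'}$ used in the second part become transparent, and no analytic input beyond elementary linear algebra and the definitions is required.
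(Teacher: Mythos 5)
Your proof of the descent criterion (the first assertion) is correct and matches the paper's intent, which rests on the one-line observation that $\pi_{ij}$ vanishes on $D_\nu$ exactly when $i\sim_\nu j$.

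For the second assertion, however, you have proved a weaker statement than the lemma requires. You interpret ``non-zero on $E_{\lambda\nu}$'' as ``not identically zero'' and accordingly exhibit a single generic point $[p] \in E_{\lambda\nu}$ with $\pi_{ij}([p]) \neq 0$. But the lemma must be read as asserting that $\pi_{ij}$ is \emph{nowhere} zero on $E_{\lambda\nu}$: in the very next paragraph the paper pulls back the $\mathrm{SO}(3)$-invariant $2$-form $\omega$, defined on $E^\circ\setminus 0$, along $\pi_{ij}$ to obtain the $2$-forms $\omega_i$ on all of $E_{\lambda\nu}$, and that pullback only makes sense if $\pi_{ij}$ maps $E_{\lambda\nu}$ into $E^\circ\setminus 0$. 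Mere non-vanishing at one point does not secure this.

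The gap can be closed by a short argument in the same spirit as your preimage observation, but run in the forward direction: suppose $p \in D_\lambda$ with $p_i = p_j$, where $i\not\sim_\lambda j$ and $i\sim_\nu j$. Let $\nu'$ be the partition obtained from $\lambda$ by merging the $\lambda$-blocks of $i$ and $j$. Then $\lambda < \nu'$, and since both of those blocks are contained in the common $\nu$-block of $i$ and $j$, one also has $\nu' \leq \nu$; moreover $p \in D_{\nu'}$ because the only new identifications $a\sim_{\nu'} b$ beyond $\lambda$ are between the blocks of $i$ and $j$, where $p_a = p_i = p_j = p_b$. Hence $[p] \in D_{\nu'}/D_\nu$, which is one of the deleted diagonals, so $[p] \notin E_{\lambda\nu}$. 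This shows $\pi_{ij}$ vanishes nowhere on $E_{\lambda\nu}$, completing the forward direction; your argument for the reverse implication (identical vanishing when $i\sim_\lambda j$) is fine as written.
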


Now denote by $\omega$ the $\mathrm{SO}(3)$-invariant closed $2$-form on
$E^\circ\setminus 0$ whose de Rham class $[\omega]$ generates
$H^2(E^\circ\setminus 0,\ZZ)$.   For $1\leq i \leq k$, define
\begin{equation}\label{e22.7.10.18}
\omega_i = 2\sum_{i\sim_\nu j,\; i\not\sim_{\lambda} j}
\pi_{ij}^*(\omega).
\end{equation}
Then $\omega_i$ is a closed integral $2$-form on $E_{\lambda\nu}$ and
one can find a circle-bundle $Q_i$ with connection $\alpha_i$ such
that $\rd \alpha_i = 2\pi \sqrt{-1} \omega_i$.  Since $E_{\lambda\nu}$
is simply connected, $(Q_i,\alpha_i)$ is unique up to isomorphism.
From the definition of $\omega_i$, it is clear that $\omega_i =
\omega_{i'}$ if $i\sim_\lambda i'$, for the induced maps $\pi_{ij}$
and $\pi_{i'j}$ on $E_{\lambda\nu}$ are then equal.  So pick a set of
indices $i_1,\ldots, i_\ell$ ($\ell = \row(\lambda)$) such that $i_j$ is in the $j$-th block of
$\lambda$ and define
\begin{equation}\label{e23.7.10.18}
\cT_{\lambda\nu} = Q_{i_1} \times \cdots \times Q_{i_\ell}.
\end{equation}
This is the (generalized) Gibbons--Manton bundle (of type
$\lambda\nu$).  Note that its rank is equal to $\row(\lambda)$.  We
note the following:
\begin{lem}
As defined above, $\cT_{\lambda\nu}$ extends uniquely from
$E_{\lambda\nu}$ to the free region of $\MB_{\lambda\nu}$ and in
particular to $\FB_{\lambda\nu}$.
\end{lem}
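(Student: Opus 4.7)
The plan is to extend each factor $Q_i$ of $\cT_{\lambda\nu}$ by first extending its curvature $2$-form $\omega_i$ smoothly across the free region, then invoking the fact that a circle bundle with connection is uniquely determined up to isomorphism by an integral closed $2$-form on a simply connected space.

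First I would analyze how the difference maps $\pi_{ij}$ with $i \sim_\nu j$ and $i \not\sim_\lambda j$ lift from $E_{\lambda\nu}$ to the free region $U \subset \MB_{\lambda\nu}$. By definition $\MB_{\lambda\nu} = [\overline{D_\lambda/D_\nu}; \partial \cW^*]$ is obtained from the radial compactification of $D_\lambda/D_\nu$ by blowing up the boundary spheres of all proper subspaces $D_\kappa/D_\nu$ (with $\lambda < \kappa < \nu$), and the free region is the complement of the non-free boundary hypersurfaces, having boundary $\FB_{\lambda\nu}$. The key claim is that each $\pi_{ij}$ lifts to a smooth map $U \to \overline{E^\circ} \setminus \{0\}$: on the interior $E_{\lambda\nu}$ the map avoids $0$ precisely because $i \not\sim_\lambda j$, and at a point of $\FB_{\lambda\nu}$ (corresponding to a direction of escape in $D_\lambda/D_\nu$) the normalized difference $\pi_{ij}/|\pi_{ij}|$ has a well-defined limit in $\partial \overline{E^\circ}$, giving a smooth extension into the boundary sphere of the radial compactification.

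Next, I would pull back via these extended $\pi_{ij}$. Representing $\omega$ as the normalized $\mathrm{SO}(3)$-invariant area form of the unit sphere (pulled back via the radial projection $E^\circ \setminus 0 \to S^2$), $\omega$ extends smoothly to $\overline{E^\circ} \setminus \{0\}$; hence each $\pi_{ij}^* \omega$ extends smoothly to $U$, giving a smooth closed extension of $\omega_i$. Integrality of this extension as a class in $H^2(U;\ZZ)$ follows from integrality on the dense subset $E_{\lambda\nu}$, because the inclusion $E_{\lambda\nu} \hookrightarrow U$ is a homotopy equivalence (via the collar of $\FB_{\lambda\nu}$ in $U$). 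Combined with the simple connectivity of $U$ (inherited from $E_{\lambda\nu}$, which is the complement of real-codimension-$\geq 3$ subspaces in a vector space), the standard existence theorem for $U(1)$-bundles with prescribed integral curvature produces a unique (up to isomorphism) circle bundle with connection on $U$ whose curvature is $2\pi\sqrt{-1}\,\omega_i$ and whose restriction to $E_{\lambda\nu}$ is isomorphic to $Q_i$. Taking the product over $i_1,\ldots,i_\ell$ gives the desired extension of $\cT_{\lambda\nu}$, and overall uniqueness follows because any two extensions must agree on the dense open subset $E_{\lambda\nu}$.

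The main obstacle is the smoothness claim of the first step: establishing that the iterated blow-ups defining the free region of $\MB_{\lambda\nu}$ resolve every relevant difference map $\pi_{ij}$ to a smooth map into $\overline{E^\circ} \setminus \{0\}$. The geometric picture is clear, but a rigorous verification requires local (projective-type) coordinates adapted to the hierarchy of subspaces $D_\kappa/D_\nu$ near $\FB_{\lambda\nu}$, in which the lifts of $\pi_{ij}$ are manifestly smooth and non-vanishing. This is essentially a compatibility between the radial compactification and the many-body blow-ups of the type established in \cite{vasymb, kottke_mbcat}.
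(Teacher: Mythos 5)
Your proposal is correct and takes essentially the same route as the paper's proof: both extend the curvature forms $\omega_i = \sum \pi_{ij}^*\omega$ by showing that the difference maps $\pi_{ij}$ extend smoothly to the free region with values in $E\setminus\{0\}$, use that $\omega$ itself is smooth on $E\setminus\{0\}$, and then recover the circle bundles from the extended curvature and the simple connectivity already used in the construction. Your fleshing out of the final step via integrality (density plus the collar homotopy equivalence) and your caveat about the first step being the real technical content are accurate, but do not change the structure of the argument.
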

\begin{proof} This can be seen by noting that $\pi_{ij}$ extends to
  define a smooth map $\MB_{\lambda\nu} \to E$, which is non-zero
  on the free region, provided, of course, that
$$
i\sim_\nu j,\; i\not\sim_{\lambda} j.
$$
The form $\omega$ also extends smoothly to $E\setminus 0$.  Thus
$\omega_i$, as defined in \eqref{e22.7.10.18}, extends to define a
smooth $2$-form on the free region of $\MB_{\lambda\nu}$.  Knowing
this, it follows that the $Q_i$ admit smooth extensions to this free
region as well.
\end{proof}

For the compatibility conditions between the different
$\cI_{\lambda\nu}$ we shall need to understand the restriction of
$\cT_{\lambda\nu}$ to a boundary hypersurface of
$B_{\lambda\nu}$. Recall that these boundary hypersurfaces are indexed
by partitions $\mu$ with $0 \leq \mu < \nu$.  The $\mu$-boundary
hypersurface will be denoted by $\p_{\mu}\FB_{\lambda\nu}$, and we have
seen (Theorem~\ref{thm.mbs.bhs}) that
\begin{equation}\label{e1.13.10.18}
\p_{\mu}\FB_{\lambda\nu} = \FB_{\lambda\mu}\times \FB_{\mu\nu}.
\end{equation}
There are two natural torus-bundles over this space.  One is the
restriction of $\cT_{\lambda\nu}$.  For the other, note that over the
product we have $\cT_{\lambda\mu}$ and $\cT_{\mu\nu}$, respectively of
ranks $\row(\lambda)$ and $\row(\mu)$. Here
$\row(\mu)< \row(\nu)$ and there is a canonical inclusion $\TT^{\row(\mu)}
\hookrightarrow \TT^{\row(\lambda)}$ corresponding in the obvious way
to the inclusion $D_\mu \hookrightarrow D_{\lambda}$.   Thus
$\TT^{\row(\mu)}$ acts on both $\cT_{\lambda\mu}$ and $\cT_{\mu\nu}$
and we may define the rank-$\row(\lambda)$ torus-bundle
\begin{equation}\label{e2.13.10.18}
\cT_{\lambda\mu} \times_{\TT^{\row(\mu)}} \cT_{\mu\nu} \to
\FB_{\lambda\mu}\times \FB_{\mu\nu}.
\end{equation}

The key result is as follows:
\begin{lem}\label{lem.GM.bdy}  When $\p_\mu \FB_{\lambda\nu}$ is identified with the
  product as in \eqref{e1.13.10.18}, we have
\begin{equation}\label{E:GM_boundary_ident}
\cT_{\lambda\nu}|\p_\mu\FB_{\lambda\nu} \simeq\cT_{\lambda\mu}
\times_{\TT^{\row(\mu)}} \cT_{\mu\nu} \mbox{ over }
\FB_{\lambda\mu}\times \FB_{\mu\nu}.
\end{equation}
\end{lem}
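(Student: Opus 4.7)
The plan is to compare both sides factor by factor, since each is a
product of $\row(\lambda)$ circle bundles indexed by the
$\lambda$-blocks, and to match their curvatures via the uniqueness
principle (circle bundles with connection on a simply connected base
are classified by their curvature) already invoked in the definition
of the Gibbons--Manton bundle. Throughout, write $m(a)$ for the
$\mu$-block containing the $\lambda$-block $L_a$.

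First, I would compute the restriction of the curvature
$\omega_{i_a}^{(\lambda\nu)}$ of the $a$-th circle factor of
$\cT_{\lambda\nu}$ to the face
$\p_\mu\FB_{\lambda\nu}=\FB_{\lambda\mu}\times\FB_{\mu\nu}$.
Splitting the defining sum according to whether $i_a\sim_\mu j$ gives
\[
\omega_{i_a}^{(\lambda\nu)} = 2\!\!\sum_{i_a\sim_\mu j,\; i_a\not\sim_\lambda j}\!\!\pi_{i_aj}^*\omega \;+\; 2\!\!\sum_{i_a\not\sim_\mu j,\; i_a\sim_\nu j}\!\!\pi_{i_aj}^*\omega.
\]
Since intra-$\mu$-cluster separations remain bounded near the face,
the first sum depends only on the $\FB_{\lambda\mu}$ variable and
equals $\omega_{i_a}^{(\lambda\mu)}$ by definition.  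In the second
sum the lifted map $\pi_{i_aj}\colon\MB_{\lambda\nu}\to E$ sends
$\p_\mu\FB_{\lambda\nu}$ into $\p E$ through the limiting direction
of the cluster separation $q_{m(a)}-q_{m(j)}$, factoring as
$\pi_{m(a),m(j)}^{(\mu\nu)}\colon\FB_{\mu\nu}\to\p E\hookrightarrow E$.
Regrouping by $\mu$-block $c=m(j)$ introduces a multiplicity $|M_c|$
which matches exactly the multiplicity in the definition of
$\omega_{r_{m(a)}}^{(\mu\nu)}$ (where $r_{m(a)}$ is any representative
of $M_{m(a)}$, e.g.\ $i_a$), so
\[
\omega_{i_a}^{(\lambda\nu)}\big|_{\p_\mu\FB_{\lambda\nu}} = \omega_{i_a}^{(\lambda\mu)} + \omega_{r_{m(a)}}^{(\mu\nu)}.
\]

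Next, I would identify the $a$-th circle factor of
$\cT_{\lambda\mu}\times_{\TT^{\row(\mu)}}\cT_{\mu\nu}$ with the tensor
product $Q_{i_a}^{(\lambda\mu)}\otimes Q_{r_{m(a)}}^{(\mu\nu)}$.
Under the inclusion
$\TT^{\row(\mu)}\hookrightarrow\TT^{\row(\lambda)}$ sending the $b$-th
coordinate to the sum of the $a$-coordinates with $L_a\subset M_b$,
the $\TT^{\row(\mu)}$-action on the $a$-th factor
$Q_{i_a}^{(\lambda\mu)}$ factors through the $m(a)$-th projection
$\TT^{\row(\mu)}\to\TT$, as does its action on
$Q_{r_{m(a)}}^{(\mu\nu)}$; the fibre product along this common
$\TT$-action is by construction the tensor product, with curvature
$\omega_{i_a}^{(\lambda\mu)}+\omega_{r_{m(a)}}^{(\mu\nu)}$.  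Matching
curvatures factor by factor and invoking the uniqueness principle on
the simply connected interior $E_{\lambda\mu}\times E_{\mu\nu}$ of
the face (with unique extension to the compact face via the same
pullback argument used to extend $\cT_{\lambda\nu}$ in the first
place) assembles these into the required
$\TT^{\row(\lambda)}$-equivariant torus-bundle isomorphism.

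The main technical step is the asymptotic identification of
$\pi_{ij}^*\omega$ with $\pi_{m(i),m(j)}^{(\mu\nu)*}\omega$ on the
face for $i\not\sim_\mu j$.  This relies on the explicit blow-up
structure of $\MB_{\lambda\nu}$ near $\p_\mu\FB_{\lambda\nu}$---where
the rescaled map $\pi_{ij}/|q_{m(i)}-q_{m(j)}|$ converges smoothly to
the unit direction of the cluster separation---combined with the
smooth extension of the $\mathrm{SO}(3)$-invariant form $\omega$ from
$E^\circ\setminus\{0\}$ to $E\setminus\{0\}$.
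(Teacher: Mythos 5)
Your proof is correct and follows essentially the same route as the paper: the paper also fixes a $\lambda$-block representative $i$, splits the defining sum \eqref{e22.7.10.18} for $\omega_i^{(\lambda\nu)}$ according to whether $j$ lies in the same $\mu$-block as $i$, matches the two halves with the Chern classes of $L_{i,\lambda\mu}$ and $L_{i,\mu\nu}$, and then extends over the compact boundary face exactly as in the extension lemma. Your added detail on the $\TT^{\row(\mu)}$-quotient structure and the asymptotic factoring of the difference maps $\pi_{ij}$ through $\FB_{\mu\nu}$ fills in what the paper leaves implicit, though the multiplicity-matching step is automatic since the second sum and the defining sum for $\omega^{(\mu\nu)}_{r_{m(a)}}$ run over literally the same index set.
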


\begin{proof}  Starting from the identification
$D_{\lambda,\nu} \cong D_{\lambda,\mu} \times D_{\mu,\nu}$, we obtain
the relation
\[  E_{\lambda\nu} \hookrightarrow E_{\lambda\mu} \times E_{\mu\nu}
\]
between  the free regions of $D_{\lambda\nu}$, $D_{\lambda\mu}$ and
$D_{\mu\nu}$,  since only diagonals of the form $D\times E_{\mu\nu}$
and $E_{\lambda\mu}\times D'$ are removed on the right-hand side.

Then, fixing a block of $\lambda$ with representative element $i$, and
working complex line bundles $L_i$ instead of the circle-bundles
$Q_i$, we have an
\[
L_{i,\lambda\nu} \cong  L_{i,\lambda\mu} \otimes L_{i,\mu\nu};
\]
this follows at the level of Chern classes by splitting the sum
\eqref{e22.7.10.18} defining the LHS according as $j$ is or is not in
the same $\mu$-block of $i$.  As before, this
extends to an isomorphism over the boundary hypersurface
$\p_{\mu}\FB_{\lambda\nu}$ as required.
\end{proof}

This completes our discussion of the generalized Gibbons--Manton
bundles. Now that these are defined, \eqref{e15.7.10.18} makes sense
for any space $\Mon_{\mu}$ with a free $\TT^{\row(\mu)}$-action.

\subsection{Main Theorem}\label{sec.main.thm}

We are now nearly ready to state the main result, the proof of which
will appear in Part II.  First recall the definitions
$\Sigma_{\lambda\nu}$, $\Sym_{\nu}$ from \S\ref{sec.sym.mbs}.  Define also, for
any pair $\lambda < \nu$,
\begin{equation}\label{e5.13.10.18}
\Sym_{\lambda\nu}^0 =
\frac{\Stab_{\Sigma_{\lambda\nu}}(D_\nu)}{\Stab_{\Sigma_k}(D_\lambda)},\;\;
\Sym_{\lambda\nu} =
\frac{\Sigma_{\lambda\nu}}{\Stab_{\Sigma_k}(D_\lambda)}
\end{equation}
so we have the exact sequence
\begin{equation}
\{1\} \to \Sym_{\lambda\nu}^0 \to \Sym_{\lambda\nu} \to
\frac{\Sigma_{\lambda\nu}}{\Stab_{\Sigma_k}(D_\nu)} \to \{1\}.
\end{equation}
Both $\Sym^0_{\lambda\nu}$ and $\Sym_{\lambda\nu}$ are symmetry groups
of the flag $D_\nu \subset D_{\lambda}$, the former being the subgroup
of permutations equal to the identity on $D_\nu$.

We note that these groups act on the set of $2$-forms
$\{\omega_{i_1},\ldots,\omega_{i_\ell}\}$, so that $\sigma^*(\omega_i)
= \omega_{\sigma^{-1}(i)}$.  There is a corresponding lift of these
group actions from $\FB_{\lambda\nu}$ to the torus-bundle
$\cT_{\lambda\nu}$.

\begin{thm}\label{cpct.conj} Let $k>1$ and let $\nu$ be a partition of
  $\bk$.  Then there is a compactification $\Mon_{\nu}$ of the product
  $\cM_{\nu}$ as a manifold with iterated boundary fibration structure
  having the following properties:
\begin{enumerate}
\item The boundary hypersurfaces of $\Mon_{\nu}$ are indexed by the
$\Stab_{\Sigma_k}(D_\nu)$-orbits of
partitions $\lambda$ with $0\leq \lambda < \nu$. Given $\lambda<\nu$,
the corresponding boundary hypersurface is
\begin{equation}
N_{\lambda\nu} = \cI_{\lambda\nu}/\Sym^0_{\lambda\nu},
\end{equation}
where
\begin{equation}
\cI_{\lambda\nu} = \Mon_{\lambda} \times_{\TT^{\row(\lambda)}}
\cT_{\lambda\nu}.
\end{equation}
In particular, with $\nu$ fixed, for each $\lambda$ we have a fibration
$$\phi_\lambda:
N_{\lambda\nu} \to B'_{\lambda\nu},$$
where
$$
B'_{\lambda\nu} = \FB_{\lambda\nu}/\Sym^0_{\lambda\nu}.
$$
\item  If $\lambda < \mu < \nu$, then the intersection $N_{\lambda\nu}
  \cap N_{\mu\nu}$ is non-empty, and all such intersections arise in
  this way up to the action of $\Sym^0_{\lambda\nu}$.  In this case,
there exists
  $\phi_{\lambda\mu}$ giving the compatibility conditions of an IBF
  structure (cf.\ Definition~\ref{def.mrs}).
\item The $\TT^{\row(\nu)}$- and $\Sym_{\nu}$-actions extend
  smoothly from $\cM_{\nu}$ to $\Mon_{\nu}$.  The quotient
  $\Mon_{[\nu]}=\Mon_{\nu}/\Sym_{\nu}$ has boundary hypersurfaces
$\cI_{[\lambda\nu]}$, say, indexed by the $\Sym_{\nu}$-orbits $[\lambda\nu]$ of partitions
  $\lambda<\nu$, and
$$
\cI_{[\lambda\nu]} = \cI_{\lambda\nu}/\Sym_{\lambda\nu}
$$
\end{enumerate}

\end{thm}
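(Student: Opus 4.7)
The proof is by induction on $\nu$ in the refinement poset of partitions of $\bk$, with base case $\nu = 0$ for which $\Mon_0 = (S^1)^k$ is compact and boundaryless, so all three statements hold vacuously. Assuming the theorem for all $\mu < \nu$, the inductive hypothesis provides $\Mon_\lambda$ as a compact MWC with IBF carrying smooth $\TT^{\row(\lambda)}$- and $\Sym_\lambda$-actions for each $\lambda < \nu$, while the Gibbons--Manton bundles $\cT_{\lambda\nu} \to \FB_{\lambda\nu}$ are already in hand from the preceding subsection. One thus obtains well-defined compact MWCs
\[
\cI_{\lambda\nu} = \Mon_\lambda \times_{\TT^{\row(\lambda)}} \cT_{\lambda\nu}
\]
for each $\lambda < \nu$, whose IBF structures are inherited from $\Mon_\lambda$ on the fibre and from the many-body compactification $\MB_{\lambda\nu}$ on the base. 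Quotienting by the finite group $\Sym^0_{\lambda\nu}$ produces the candidate boundary hypersurfaces $N_{\lambda\nu}$ with projections $\phi_\lambda$ to $\FB_{\lambda\nu}/\Sym^0_{\lambda\nu}$ as asserted in part (1).

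The existence of a compact MWC $\Mon_\nu$ containing $\cM_\nu$ as its interior and having the $N_{\lambda\nu}$ as its boundary hypersurfaces is the analytic content deferred to Part II: it rests on the $L^2$ monopole geometry and the Taubes a priori estimates reviewed in Section~\ref{S:cluster}, which force any divergent subsequence in $\cM_\nu$ to accumulate on a cluster of the asserted type, modulo framing rotations parametrised by fibres of the relevant Gibbons--Manton torus bundle. Granting existence, the boundary identification in part (1) amounts to matching each divergent subsequence with the refinement $\lambda < \nu$ that it realises; the $\Stab_{\Sigma_k}(D_\nu)$-orbit indexing then follows because partitions differing by a permutation within $\nu$-blocks yield the same asymptotic cluster pattern.

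For part (2), take a chain $\lambda < \mu < \nu$. Lemma~\ref{lem.GM.bdy} applied to the restriction of $\cT_{\lambda\nu}$ over the $\mu$-boundary of $\FB_{\lambda\nu}$ yields
\[
\cI_{\lambda\nu}\big|_{\p_\mu \FB_{\lambda\nu}} \simeq \Mon_\lambda \times_{\TT^{\row(\lambda)}} \cT_{\lambda\mu} \times_{\TT^{\row(\mu)}} \cT_{\mu\nu} = \cI_{\lambda\mu} \times_{\TT^{\row(\mu)}} \cT_{\mu\nu},
\]
using associativity of the fibered products and the inclusion $\TT^{\row(\mu)} \hookrightarrow \TT^{\row(\lambda)}$. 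By the inductive hypothesis $\cI_{\lambda\mu}/\Sym^0_{\lambda\mu}$ is a boundary hypersurface of $\Mon_\mu$, so the right-hand side (after quotienting by the appropriate subgroup of $\Sym^0_{\lambda\nu}$) exhibits $N_{\lambda\nu} \cap N_{\mu\nu}$ as a boundary face of $N_{\mu\nu}$ fibered over $\FB_{\mu\nu}/\Sym^0_{\mu\nu}$; this yields the fibration $\phi_{\lambda\mu}$ and verifies the compatibility diagram of Definition~\ref{def.mrs}. Part (3) is then straightforward: $\TT^{\row(\nu)}$ acts on each $\Mon_\lambda$ via its canonical inclusion into $\TT^{\row(\lambda)}$ and on $\cT_{\lambda\nu}$ as a subgroup of the structure group, while $\Sym_\nu$ permutes factors of $\cM_\nu$ in a block-size-preserving manner and acts equivariantly on the defining $2$-forms $\omega_{i_j}$ of $\cT_{\lambda\nu}$; both actions therefore lift smoothly to each $\cI_{\lambda\nu}$ and descend to $\Mon_\nu$ by the inductive hypothesis applied to the $\Sym_\lambda$-actions on the factors.

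The principal obstacle is the existence step underlying part (1): showing that the analytically constructed compactification really has the claimed product-over-torus form on each boundary hypersurface. The combinatorial and symmetry checks outlined above are essentially bookkeeping, built on Theorem~\ref{thm.mbs.bhs}, Lemma~\ref{lem.GM.bdy}, and the inductive hypothesis; but pinning down the asymptotic framing phase information at infinity as precisely a fibre of the Gibbons--Manton torus bundle requires the detailed Taubes-type estimates and model matching carried out in Part II.
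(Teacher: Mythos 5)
Your proposal matches the paper's treatment: the theorem's analytic existence statement is deferred to Part II in both cases, and what remains — the inductive set-up starting from $\Mon_0=(S^1)^k$, the identification of the two descriptions of $\p_\mu N_{\lambda\nu}$ versus $\p_\lambda N_{\mu\nu}$ via Lemma~\ref{lem.GM.bdy}, the compatibility $\phi_\mu=\phi_{\lambda\mu}\circ\phi_\lambda$, and the extension of the torus and symmetric-group actions — is exactly the consistency check the paper carries out. The only difference is one of detail: where you write ``after quotienting by the appropriate subgroup of $\Sym^0_{\lambda\nu}$,'' the paper makes this precise by identifying the groups $G$ and $G'$ acting on the two descriptions and checking via an exact sequence that the successive quotients agree.
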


This will not be proved here, but we shall carry out the consistency
checks that are implied by it.  In particular we shall check that the
definitions of the boundary hypersurfaces are consistent with the
points enumerated in the Theorem.

\begin{rmk} In the statement of
this Theorem we abuse notation: we label the boundary hypersurfaces by
partitions $\lambda<\nu$, where really the labelling is by the
$\Stab(D_{\nu})$-orbits of such partitions.  The only difficulty with
this is that we `overcount' the boundary hypersurfaces this way:
$N_{\lambda\nu}$ and $N_{\lambda'\nu}$ are the same boundary
hypersurface of $\Mon_{\nu}$ if and only if $\lambda' =
\sigma(\lambda)$ for some $\sigma \in \Stab_{\Sigma_k}(D_\nu)$.
\end{rmk}

Let us start with a check on dimensions. The dimension of $\cM_{k}$ is
$4k-3$, so
$$
\dim \cM_\lambda = 4k - 3\row(\lambda).
$$
Similarly $\dim D_\lambda = 3\row(\lambda)$.   Hence the dimension of
$\FB_{\lambda\nu}$, being a boundary hypersurface of
$\MB_{\lambda\nu}$, is $3(\row(\lambda)-\row(\nu)) - 1$.  Thus
$$
\dim \cI_{\lambda\nu} = 4k - 3\row(\lambda) +
3(\row(\lambda)-\row(\nu)) - 1 = 4k - 3\row(\nu) -1.
$$
Moreover, if $\lambda< \mu$ then $\dim B_{\lambda\nu} > \dim
B_{\mu\nu}$ so our ordering conventions are consistent with those used
in Definition~\ref{def.mrs}.

We now come to the main point, the intersections of the boundary
hypersurfaces of $\Mon_{\nu}$.  Notice that $N_{\lambda\nu}$ has two
types of boundary hypersurface: from the inductive description, there
are those corresponding to partitions $\mu$ with
\begin{equation}\label{e11.14.10.18}
D_\nu \subset D_\mu \subset D_\lambda
\end{equation}
and those corresponding to partitions $\kappa$ with
\begin{equation}\label{e12.14.10.18}
D_\mu \subset D_\lambda \subset D_\kappa.
\end{equation}
Fixing the chain \eqref{e11.14.10.18}, consider the $\mu$ boundary
hypersurface of $N_{\lambda\nu}$ and the $\lambda$-boundary
hypersurface of $N_{\mu\nu}$.  Now
the $\mu$-boundary surface of
$\cI_{\lambda\nu}$ is just the restriction of the fibration
$$
\Mon_\lambda \times_{\TT^{\row(\lambda)}} \cT_{\lambda\nu} \to
\FB_{\lambda\nu}
$$
to the the $\mu$-boundary hypersurface of the base,
\begin{equation}\label{e3.14.10.18}
\p_\mu \FB_{\lambda\nu} = \FB_{\lambda\mu} \times \FB_{\mu\nu}
\end{equation}
To take into account the group action, we must factor out by the
subgroup $G$, say, of $\Sym^0_{\lambda\nu}$ which leaves
\eqref{e11.14.10.18} invariant.  Thus
\begin{equation}\label{e2.14.10.18}
G = \Stab_{\Sigma_{\lambda\mu\nu}}(D_\nu)/ \Stab(D_\lambda)
\end{equation}
where $\Sigma_{\lambda\mu\nu}$ is the group of all permutations in
$\Sigma_k$ which leave \eqref{e11.14.10.18} invariant.
Using Lemma~\ref{lem.GM.bdy}, we obtain
\begin{equation}\label{surface1}
\p_\mu N_{\lambda\nu} =
\left(\Mon_{\lambda}
\times_{\TT^{\row(\lambda)}} \cT_{\lambda\mu}
\times_{\TT^{\row(\mu)}} \cT_{\mu\nu}\right)/G \to
(\FB_{\lambda\mu} \times \FB_{\mu\nu})/G.
\end{equation}
On the other hand, the $\p_\lambda N_{\mu\nu}$ is obtained by
restricting $\phi_\mu$ to the $\lambda$-boundary in the fibres, that
is
$$
N_{\lambda\mu} \times_{\TT^{\row(\mu)}} \cT_{\mu\nu}/G'
$$
Here $G'$ is the subgroup of $\Sym^0_{\mu\nu}$ which leaves $D_\lambda$
invariant,
$$
G' = \frac{\Stab_{\Sigma_{\lambda\mu\nu}}(D_\nu)}{
\Stab_{\Sigma_{\lambda\mu\nu}}(D_\mu)} =
\frac{\Stab_{\Sigma_{\lambda\mu\nu}}(D_\nu)}{\Stab_{\Sigma_{\lambda\mu}}(D_\mu)}
$$
By the inductive assumption,
$$
N_{\lambda\mu} = \Mon_{\lambda} \times_{\TT^{\row(\lambda)}}
  \cT_{\lambda\mu}/\Sym^0_{\lambda\mu},
$$
so
\begin{equation}\label{surface2}
\p_\lambda N_{\mu\nu}
=
\left(
\frac{\Mon_{\lambda} \times_{\TT^{\row(\lambda)}}
  \cT_{\lambda\mu}}{\Sym^0_{\lambda\mu}} \times_{\TT^{\row(\mu)}}
  \cT_{\mu\nu}
\right)/G'.
\end{equation}
If we ignore the group actions, we see that we have the same manifolds
in \eqref{surface1} and \eqref{surface2}, as both are equal to
\begin{equation}
\Mon_{\lambda} \times_{\TT^{\row(\lambda)}}
  \cT_{\lambda\mu} \times_{\TT^{\row(\mu)}}
  \cT_{\mu\nu} \to \FB_{\lambda\mu} \times \FB_{\mu\nu}
\end{equation}
So it remains only to see that the successive quotients first by
$\Sym^0_{\lambda\mu}$ and then $G'$ in \eqref{surface2} are equivalent
to factoring out by $G$  in \eqref{surface1}.  From the descriptions
of $G$ and $G'$, however, we see that $G'$ is a quotient of $G$, and
in fact
\begin{equation}
\{1\} \to
\Sym^0_{\lambda\mu} \to
{\Stab(D_\lambda)} \to
G \to G' \to \{1\}.
\end{equation}
From this it
follows that \eqref{surface1} and \eqref{surface2} are naturally
diffeomorphic.

Identifying the intersection of boundary hypersurfaces corresponding
to the chain \eqref{e11.14.10.18} with \eqref{surface1}, the
restriction of $\phi_\lambda$ is the projection map given there.  The
second projection $\FB_{\lambda\mu} \times \FB_{\mu\nu} \to
\FB_{\mu\nu}$ induces a map
$$
\phi_{\lambda\mu} : \FB_{\lambda\mu} \times \FB_{\mu\nu}/G \to
\FB_{\mu\nu}/\Sym^0_{\mu\nu}
$$
and we clearly have $\phi_\mu = \phi_{\lambda\mu}\circ\phi_\lambda$ as
required by Definition~\ref{def.mrs}.  Thus, although we haven't yet
proved that the compactification $\Mon_{\nu}$ of $\cM_{\nu}$ exists,
if we assume that all $\Mon_{\lambda}$, for $\lambda<\nu$ have been
constructed with the above properties, then we can form a collection
of MWCs, namely the $\cI_{\lambda\nu}$ and their quotients the
$N_{\lambda\nu}$, which fit together to form a `formal boundary'
with IBF structure.

We have not yet discussed the group actions, point (3) of the
above.  On the boundary hypersurface $N_{\lambda\nu}$, there is a
$\TT^{\row(\lambda)}$-action, by the inductive hypothesis that
$\Mon_{\lambda}$ has a smooth $\TT^{\row(\lambda)}$ action.  As in the
previous discussion of the Gibbons--Manton bundles, the inclusion
$D_\nu \subset D_\lambda$ gives an inclusion of the corresponding tori
$\TT^{\row(\nu)} \hookrightarrow \TT^{\row(\lambda)}$ and this
inclusion gives the action of $\TT^{\row(\nu)}$ on $N_{\lambda\nu}$.

In order to make $\Sym_\nu$ act on the set of boundary
hypersurfaces, we must pick a lift $\sigma \to \Sigma_{\nu}$.  This
leads to a well-defined action on $\Mon_{\nu}$ with the claimed
properties because the choice of lift is compensated for by dividing
out by the groups $\Sym^0_{\lambda\nu}$.

We note that Theorem~\ref{cpct.conj} contains as a special case the
compactification of $\cM_k$ as a manifold with corners:

\begin{thm} The moduli space $\cM_k$ of dimension $4k-3$ has a
  compactification $\Mon_k$, which is a compact manifold with
  corners and a natural IBF structure. The boundary hypersurfaces are
  indexed by the $\Sigma_k$-orbits of partitions of $\bk$. If $\lambda$ and $\mu$ are
  partitions, then the corresponding boundary hypersurfaces intersect
  if and only if $\lambda$ and $\mu$ are comparable up to the action
  of $\Sigma_k$.  The boundary hypersurface corresponding to $\lambda$
  is
$$
\cI_{[\lambda]} = \cI_{\lambda}/\Sym_\lambda
$$
where
$$ \cI_\lambda = \Mon_{\lambda}\times_{\TT^{\row(\lambda)}}
  \cT_{\lambda k} \to  \FB_{\lambda k}
$$
and $\cT_{\lambda k}$ is the Gibbons--Manton bundle of type $\lambda$.
\end{thm}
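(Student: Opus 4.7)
The plan is to obtain this theorem as the special case $\nu = k$ of Theorem~\ref{cpct.conj}, where $\nu = k$ denotes the maximal partition of $\bk$ consisting of the single block $\bk$ itself. With this choice, the product $\cM_\nu$ reduces to the single factor $\cM_k$, the rank $\row(\nu) = 1$ so $\TT^{\row(\nu)} = \TT$ coincides with the frame-rotation torus, and the minimal diagonal $D_\nu = D_k$ is fixed setwise by every element of $\Sigma_k$, giving $\Stab_{\Sigma_k}(D_\nu) = \Sigma_k$ and $\Sym_\nu = \{1\}$. Consequently the quotient $\Mon_{[\nu]} = \Mon_\nu/\Sym_\nu$ from point (3) of Theorem~\ref{cpct.conj} is simply $\Mon_k$ itself, which produces the required compactification.

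First I would unwind the group-theoretic descriptions in this specialization. For each proper partition $\lambda < k$ of $\bk$, the group $\Sigma_{\lambda k}$ of permutations preserving the flag $D_k \subset D_\lambda$ reduces to $\Sigma_\lambda$, since $D_k$ is preserved by every permutation and the flag condition becomes just invariance of $D_\lambda$. Therefore
\[
\Sym^0_{\lambda k} = \frac{\Stab_{\Sigma_{\lambda k}}(D_k)}{\Stab_{\Sigma_k}(D_\lambda)} = \frac{\Sigma_\lambda}{\Stab_{\Sigma_k}(D_\lambda)} = \Sym_\lambda,
\]
where the last equality is Definition~\eqref{def.sigma.lambda}. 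Thus Theorem~\ref{cpct.conj}(1) gives the boundary hypersurface $N_{\lambda k} = \cI_{\lambda k}/\Sym^0_{\lambda k} = \cI_\lambda/\Sym_\lambda$, which is exactly $\cI_{[\lambda]}$ with $\cI_\lambda = \Mon_\lambda \times_{\TT^{\row(\lambda)}} \cT_{\lambda k}$ fibred over $\FB_{\lambda k}$. Since $\Stab_{\Sigma_k}(D_\nu) = \Sigma_k$, the indexing set of boundary hypersurfaces collapses to $\Sigma_k$-orbits of proper partitions of $\bk$, as required.

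Next, the intersection property is read off directly from Theorem~\ref{cpct.conj}(2): for any two proper partitions $\lambda$, $\mu < k$, the boundary hypersurfaces $N_{\lambda k}$ and $N_{\mu k}$ have non-empty intersection precisely when $\lambda$ and $\mu$ are comparable; and since the labelling is only well-defined modulo the $\Stab_{\Sigma_k}(D_k) = \Sigma_k$-action, this translates verbatim to $\cI_{[\lambda]} \cap \cI_{[\mu]} \neq \emptyset$ if and only if $\lambda$ and $\mu$ are comparable up to $\Sigma_k$. The IBF structure on $\Mon_k$ and the fibrations $\cI_\lambda \to \FB_{\lambda k}$ are inherited directly from the same source, using Lemma~\ref{lem.GM.bdy} to patch Gibbons--Manton bundles across corners.

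The main obstacle lies entirely in the inductive construction that establishes Theorem~\ref{cpct.conj}, which is deferred to Part II; once that inductive statement is granted, the present theorem is just its outermost layer, together with the observation that the complete $\Sigma_k$-symmetry of the minimal diagonal $D_k$ causes the various stabilizer and symmetry groups $\Stab_{\Sigma_k}(D_\nu)$, $\Sigma_{\lambda\nu}$, and $\Sym^0_{\lambda\nu}$ to collapse to the simpler groups $\Sigma_k$, $\Sigma_\lambda$, and $\Sym_\lambda$ featured in the final statement. The dimension count $\dim \cM_k = 4k - 3$ is consistent with $\dim \cI_\lambda = 4k - 3\row(\nu) - 1 = 4k - 4$ as a codimension-one boundary face.
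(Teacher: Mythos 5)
Your proposal is correct and is exactly the route the paper takes: the theorem is stated immediately after Theorem~\ref{cpct.conj} with the remark that it is the special case $\nu = k$. You have usefully supplied the group-theoretic bookkeeping that the paper leaves implicit — in particular the observations that $\Stab_{\Sigma_k}(D_k) = \Sigma_k$, $\Sym_k = \{1\}$, $\Sigma_{\lambda k} = \Sigma_\lambda$, and $\Sym^0_{\lambda k} = \Sym_{\lambda k} = \Sym_\lambda$ — so that points (1)--(3) of Theorem~\ref{cpct.conj} collapse to the displayed assertions, but this is precisely the specialization the authors intend.
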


\subsection{Low-Charge Examples Revisited}\label{sec:ex.rev}

If $k=1$, there is only the trivial partition $(1)$ of $k$ and $\cM_1 \cong \TT$ is already a compact space.

For $k=2$, the only nontrivial partition of $k$ is the `free' partition $0 = (1,1)$. Our compactification of $\cM_2$ is as a manifold
with boundary $\pa \Mon_2 = \cI_0$, which fibers over $B_{0 2} = S(\ol{\RR^6}/\RR^3)/\Sigma_2 \cong \RR P^2$ with fiber $\cM_{(1,1)} = \cM_1\times \cM_1 \cong \TT^2$.
The quotient of $\cM_2$ by $\TT$ (which acts diagonally on the fibre $\TT^2$ in the above) is the well-known Atiyah-Hitchin manifold of dimension 4, and we recover the known fact that it may be compactified by adding a boundary hypersurface which is a circle fibration over $\RR P^2$ \cite{HHM}.

The case $k=3$ is more interesting. The non-trivial partitions $0=(1,1,1)$ and $a = (1,2)$ lead to two boundary hypersurfaces: $\cI_{0} \longrightarrow B_{0 3}$ with fibre $F_{0} = \cM_1 \times \cM_1 \times \cM_1 \cong \TT^3$ and $\cI_{a} \longrightarrow B_{a 3}$ with fibre a compactification of $\cM_1 \times \cM_2$ (which in this case is simply $\cM_1 \times \Mon_2$ since $\cM_1$ is already compact). There is a single codimension two boundary face $\cI_{0} \cap \cI_{a}$.

For $k=4$, the non-trivial partitions are $0=(1,1,1,1)$, $a = (1,1,2)$, $b =
(1,3)$ and $c = (2,2)$ and hence $\Mon_4$ has four boundary hypersurfaces.  The
boundary faces of codimension $\ell$ are enumerated by the $\Sigma_4$ orbits of
chains of partitions of the set $\set{1,2,3,4}$ of length $\ell$, which in this
case are equivalent to length $\ell$ chains of integer partitions; in other
words these codimension $\ell$ faces are just the $\ell$-fold intersections of the boundary
hypersurfaces $\cI_{\alpha}$ for  $\alpha \in \set{0,a,b,c}$.  Thus there are five corners of codimension $2$:
$\cI_0 \cap \cI_a$, $\cI_0 \cap \cI_b$, $\cI_0 \cap \cI_c$, $\cI_a \cap \cI_b$ and $\cI_a \cap \cI_c$ and two
corners of codimension $3$: $\cI_0 \cap \cI_a \cap \cI_b$ and $\cI_0 \cap \cI_a \cap \cI_c$.
There are no corners of higher codimension.

For $k \geq 5$, the intersections of the boundary hypersurfaces $\cI_{\ast}$
are no longer connected in general, since there is a distinction between the
$\Sigma_k$ orbits of chains of set partitions, which enumerate the boundary
faces of a given codimension, and the chains of integer partitions, which
correspond to intersections of the $\cI_{\ast}$. For example in $k = 5$ the
intersection $\cI_{a} \cap \cI_{b}$ of the two boundary hypersurfaces
$\cI_{a}$, $a = (1,1,1,2)$ and $\cI_{b}$, $b = (2,3)$ is disconnected:
among its components are the quotients by $\Sigma_5$ of $\cI_{\lambda} \cap
\cI_{\nu}$ and $\cI_{\lambda} \cap \cI_{\nu'}$, where $\lambda =
\set{\set{1}, \set{2}, \set{3}, \set{4,5}}$, $\nu =
\set{\set{1,2,3},\set{4,5}}$ and $\nu' = \set{\set{1,2},\set{3,4,5}}$.

\subsection{Asymptotic Metrics} \label{S:model_metrics}

Now that we have described the boundary hypersurfaces of our
compactification in more detail, we can also give more information
about the metric.  We have already stated that it is a $\Phi$ metric
adapated to the IBF structure of the compactification $\Mon_k$.  But
there is also a relatively simple description in terms of the adapted
covers constructed in Proposition~\ref{thm:ibf.cover}.  Let
$W_\lambda$ be the set corresponding to the boundary hypersurface
labelled by the ($\Sigma_k$-orbit of) $\lambda$.

Denote by $T_{\scat,\lambda}$ the lift to $\MB_{\lambda,k}$ of the scattering
tangent bundle of $\ol{D_\lambda/D_k}$.  Denote by $\eta_\lambda$ the lift to
$\MB_{\lambda,k}$ of the euclidean metric on $\ol{D_\lambda/D_k}$, so
  that $\eta_{\lambda}$ is a smooth metric on $T_{\scat,\lambda}$.
Denote by $g_\lambda$ the riemannian
product metric on $\cM_\lambda$.
It is clear that these metrics descend to the quotients of these
spaces by $\Sym_\lambda$.

\begin{thm}
The boundary fibration $\phi_\lambda: N_\lambda \cap W_\lambda \to
\FB_\lambda/\Sym_{\lambda}$ admits a smooth extension $W_\lambda \to
U_\lambda$, a product neighbourhood of $\FB_\lambda$ in $\MB_\lambda$,
such that
\begin{itemize}
\item $T_\Phi$ is isomorphic to $\phi^*_\lambda T_{\scat,\lambda}
  \oplus T\cM_\lambda$ and
\item relative to this decomposition, $g_k$ is smooth and its
  restriction to $N_\lambda \cap W_\lambda$ is the direct sum
  $\eta_\lambda \oplus  g_\lambda$.
\end{itemize}
\end{thm}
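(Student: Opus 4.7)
The plan is to (i) extend the boundary fibration to a tubular neighbourhood using the compatible boundary product structure of $\Mon_k$; (ii) decompose the $\Phi$-tangent bundle over this neighbourhood; and (iii) identify the leading-order form of $g_k$ using Theorem~\ref{main.theorem}.

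For (i), I would invoke the compatible boundary product structure (Definition~\ref{dfn:bps}), whose existence for an IBF manifold with corners is recalled just after that definition. The flow of $v_\lambda$ gives a diffeomorphism $\{\rho_\lambda < \delta\} \cong N_\lambda \times [0,\delta)_{\rho_\lambda}$, and this is the collar on which $W_\lambda$ was built in the proof of Proposition~\ref{thm:ibf.cover}. The same function $\rho_\lambda$ serves as a boundary defining function for $\FB_\lambda$ in $\MB_\lambda$, giving a collar $U_\lambda \cong (\FB_\lambda/\Sym_\lambda) \times [0,\delta)$ on the base side. The extended fibration is then
\[
\wt\phi_\lambda(n,\rho_\lambda) := (\phi_\lambda(n),\rho_\lambda) \in U_\lambda,
\]
smooth by construction.

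For (ii), I would use the characterization that a $\Phi$-vector field is tangent to the fibres of $\phi_\lambda$ at $N_\lambda$ and scales like $\rho^2\pa_\rho$ in the normal direction. Taking the kernel of $d\wt\phi_\lambda$ together with a horizontal complement selected by the boundary product vector fields $\{v_\mu\}$ with $\mu \geq \lambda$ yields a splitting
\[
\fT \Mon_k \big|_{W_\lambda} \;\cong\; \wt\phi_\lambda^* T_{\scat,\lambda} \;\oplus\; V.
\]
By the defining property of $W_\lambda$ in Proposition~\ref{thm:ibf.cover}, each fibre of $\wt\phi_\lambda$ meets $W_\lambda$ in a relatively compact subset of $F_\lambda^\circ$, which is canonically $\cM_\lambda$ (up to the Gibbons--Manton torus twist, which is locally trivial and immaterial for the tangent bundle identification). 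Hence $V|_{W_\lambda}$ is canonically isomorphic to $T\cM_\lambda$, establishing the stated splitting.

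For (iii), the smoothness of $g_k$ as a $\Phi$-metric on $\Mon_k$ is part of Theorem~\ref{main.theorem}; restriction of a $\Phi$-metric to a boundary hypersurface automatically yields a direct sum with respect to the horizontal/vertical splitting above, since mixed components must vanish on $N_\lambda$ by the $\rho^2$-scaling of horizontal $\Phi$-vector fields. The identification of the base summand with the euclidean metric $\eta_\lambda$ and the fibre summand with the product $L^2$ metric $g_\lambda$ is the Gibbons--Manton--Bielawski form of the leading asymptotic $L^2$ metric on widely separated configurations of lower-charge monopoles. The main obstacle, deferred to Part II, is precisely this analytic verification: one must show that an approximate superposition of widely separated charge-$k_i$ monopoles attached to the points of a free configuration in $\FB_\lambda$ differs from the true monopole solution only by terms whose contribution to the $L^2$ metric vanishes at $\rho_\lambda = 0$ to the appropriate order. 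This requires a deformation-theoretic gluing argument with Taubes-type estimates, generalizing Bielawski's work beyond the free stratum $\lambda = 0$. Granted that analytic input, the present theorem is a direct unpacking of the IBF formalism together with the Gibbons--Manton bundle data assembled in \S\ref{sec.mon.bhs}.
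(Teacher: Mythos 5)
The paper does not actually prove this theorem in Part~I: the passage immediately preceding it says ``We are now nearly ready to state the main result, the proof of which will appear in Part II,'' and Theorem~\ref{main.theorem} (of which this is advertised as the ``more precise version'') is likewise deferred. So there is no in-paper proof to compare your proposal against, and you have correctly recognized this: your step~(iii) explicitly flags that the identification of the leading metric with $\eta_\lambda \oplus g_\lambda$ rests on a gluing-and-estimates argument which is the content of Part~II.

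On the formal scaffolding in steps~(i)--(ii), a couple of cautions. First, the choice of horizontal complement matters. You propose splitting off a horizontal subbundle ``selected by the boundary product vector fields $v_\mu$,'' but the compatible boundary product structure from Definition~\ref{dfn:bps} is not canonical, and an arbitrary such choice will generically produce nonzero off-diagonal terms of $g_k$ even after restriction to $N_\lambda \cap W_\lambda$. The horizontal complement required by the second bullet of the theorem must be $g_k$-orthogonal to the fibre directions at $\rho_\lambda = 0$, and identifying this complement with the Gibbons--Manton connection on $\cT_{\lambda k}$ is exactly the leading-order asymptotic content deferred to Part~II; one cannot simply pick the complement first and then assert that the mixed components ``must vanish'' from the $\rho^2$-scaling alone, since the $\rho^2$ factor controls the size of the scattering-tangent block but not its cross-terms with the bounded fibre directions. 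Second, the fibre of $\phi_\lambda$ is $\Mon_\lambda$ (via the associated bundle $\Mon_\lambda \times_{\TT^{\row(\lambda)}} \cT_{\lambda k}$), not $\cM_\lambda$, and the restriction to $W_\lambda$ lands in a relatively compact open subset of the interior, which is what licenses the identification $V|_{W_\lambda} \cong T\cM_\lambda$; you note this, but the qualification that the torus twist is ``immaterial'' is true for the topological splitting of $\fT$ yet is precisely what must be controlled metrically. In short: the outline is consistent with the framework the paper assembles, and you have accurately located where the real work lives, but the structural part of your argument cannot be fully decoupled from the analytic input in the way the paragraph boundaries of your proposal suggest.
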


\section{Decomposable Monopoles and Clusters}\label{S:cluster}
Throughout, we follow the observation that asymptotically, monopoles decompose into clusters of lower charge monopoles, cf.\ Sections \ref{sec.intro.metr.cpct} and \ref{sec.div.sqn}. For sequences of monopoles, this has been shown by Atiyah and Hitchin \cite[Prop. 3.8]{AH}.
As we wish to attach, in a consistent fashion (see Section \ref{sec.main.thm}), a collection of boundary hypersurfaces associated to such clusters to $\cM_k$, we need to identify not only limits of sequences in $\cM_k$ but \emph{asymptotic regions} that can be associated to proper clusters. And we need to show that these regions cover $\cM_k$ up to a relatively compact subset.

Given $k>1$, a type $a$ of $k$ and two parameters $R$, $\ve > 0$, the first thought to be large and the second small, we will define open sets $\cA_a(R,\ve) \subset \cM_k$ of decomposable monopoles. $R$ is a large radius of balls which we use to 'cover the components of the clusters` while $\ve$ is the reciprocal of a separation parameter giving a lower bound on the separation of these components. We then show that finitely many of such sets already suffice to cover $\cM_k$ and use this alongside Theorem \ref{cpct.conj} to show that any sequence in $\cM_k$ has a subsequence that either converges in $\cM_k$ or to an \emph{ideal configuration} in one of the boundary hypersurfaces $\cI_a$, proving that our ansatz indeed yields a compactification of $\cM_k$.

In the following, a pair $(A,\Phi)$ will always refer to framed monopole data, $(A,\Phi) \in \cC_{\Fr}$, solving the Bogomolny equation \eqref{e1.30.8.18}. By abuse of notation, we will call such a pair a (magnetic) monopoles as well.

\subsection{Decomposable Monopoles} We start by reviewing some key results from \cite{taubesminmax}. There, it is shown that there are numbers $\kappa(k)>0$, $N(k) \in \bN$, $R(k)>0$ and $c_0(k)>0$ with the following significance: For any charge $k$ monopole $(A,\Phi)$, we let
 \begin{equation}
  \widehat{\mathcal{U}}_{(A,\Phi)} = \Big\{\, z \in \RR^3 \,\Big|\, \int_{\mathbb{B}(z,1)} \big|F_A(z^\prime)\big|^2 dz^\prime > \tfrac{1}{2}\kappa(k)\,\Big\}
 \end{equation}
and define the \emph{strong-field region} of $(A,\Phi)$ by
 \begin{equation}\label{eq:strong.field}
  \mathcal{U}_{(A,\Phi)} = \Big\{\, z \in \RR^3 \,\Big|\, \dist\big(z,\widehat{\mathcal{U}}_{(A,\Phi)}\big) < 1 \,\Big\} \;.
 \end{equation}
Then, the set $\mathcal{U}_{(A,\Phi)}$ has $N$ connected components, where $0 < N \le N(k)$, and the centres of mass $\zeta_1,\dotsc,\zeta_N \in \RR^3$ of the connected components are uniformly bounded. We define the \emph{weak field region} to be the complement of the $R(k)$-neighbourhood of the strong-field region,
 \begin{equation}\label{eq:weak.field}
  \mathcal{W}_{(A,\Phi)} = \Big\{\, z \in \RR^3 \,\Big|\, \dist\big(z,\mathcal{U}_{(A,\Phi)}\big) > R(k) \,\Big\} \;.
 \end{equation}
On $\mathcal{W}_{(A,\Phi)}$, Taubes proved the following estimates \cite[C.1.4]{taubesminmax}:
 \begin{equation}\label{eq:taubes.c.1.4}
  1-\big|\Phi(z)\big| < \tfrac{1}{10} \quad,\qquad
  \big| \nabla_A \Phi (z) \big| < \tfrac{1}{10} \quad,\qquad
  \big|\nabla_A \nabla_A \Phi(z) \big| < \tfrac{1}{10} \;.
 \end{equation}
Moreover, he proves multipole estimates for both $\Phi$ and $\nabla_A\Phi$ \cite[C.2.1, C.3.1]{taubesminmax} on all of $\RR^3$: There exist numbers $\alpha_1, \dotsc, \alpha_N \in \RR$ depending on $(A,\Phi)$ and a constant $c_0(k)$ depending on $k$ only, so that
 \begin{equation}\label{eq:multipole}\begin{aligned}
  \left| \big|\Phi(z)\big| - 1 + \sum_{j=1}^N \frac{\alpha_j}{|z-\zeta_j|} \right|
   &\le c_0(k) \sum_{j=1}^N \frac{1}{|z-\zeta_j|^2} \\
  \left| \big(\Phi(z),\nabla_A\Phi(z)\big) + \sum_{j=1}^N \alpha_j d|z-\zeta_j|^{-1} \right|
   &\le c_0(k) \sum_{j=1}^N \frac{1}{|z-\zeta_j|^3} \\
  \left| \big[ \Phi(z),\nabla_A \Phi(z)\big] \right|
   &\le c_0(k) \sum_{j=1}^N e^{-\frac{1}{2}|z-\zeta_j|} \;.
 \end{aligned}\end{equation}
Let $W$ be the convex hull of a connected component of $\RR^3\setminus \mathcal{W}_{(A,\Phi)}$. As $|\Phi|$ is non-zero on $\partial W$, we can consider the mapping degree of $\Phi\,|\Phi|^{-1}$ over $\partial W$. This will of course be an integer, but not necessarily positive. Positivity is crucial, though, as we need to make sure that the individual components of a cluster have positive charge. A sufficient condition for positivity is obtained by comparing the size of $W$ to its distance to other non-weak regions:

\begin{lem}\label{thm:pos.charge} There is $R^\prime(k) \ge R(k) > 0$ such that for $R > R^\prime(k)$ the following holds: If there is $p_0 \in \RR^3$ satisfying
 \begin{equation}\label{condition}
  \dist\Big( \partial \mathbb{B}(p_0,R), \mathcal{U}_{(A,\Phi)} \Big) > R^{3/4} \quad\text{ and }\quad
  \mathbb{B}(p_0,R) \cap \mathcal{U}_{(A,\Phi)} \neq \emptyset \;,
 \end{equation}
then the set $\mathbb{B}(p_0,R) \cap \mathcal{U}_{(A,\Phi)}$ has strictly positive topological charge for $\Phi$, i.e.\
 \begin{equation}\label{pos.deg}
  \deg \frac{\Phi}{|\Phi|}\Big|_{\partial \mathbb{B}(p_0,R)} > 0 \;.
 \end{equation}
\end{lem}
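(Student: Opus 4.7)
The plan is to express $\deg\tfrac{\Phi}{|\Phi|}\bigr|_{\partial\mathbb{B}(p_0,R)}$ as a magnetic flux integral over the sphere, approximate it via Taubes' multipole expansion \eqref{eq:multipole} by $\sum_{\zeta_j\in\mathbb{B}(p_0,R)}\alpha_j$, and then show each $\alpha_j$ is within $o(1)$ of a positive integer so that the sum is bounded below by $1/2$, forcing the integer degree to be at least $1$. To begin, I would choose $R^\prime(k)$ so that $R^{3/4}>R(k)$ and the error terms appearing below are less than $1/2$ for $R>R^\prime(k)$. The hypothesis $\dist(\partial\mathbb{B}(p_0,R),\mathcal{U}_{(A,\Phi)})>R^{3/4}$ then places the sphere inside the weak-field region, so by \eqref{eq:taubes.c.1.4} one has $|\Phi|>9/10$ there, and $\Phi/|\Phi|:\partial\mathbb{B}(p_0,R)\to S^2\subset\su(2)$ is smooth. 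Using the uniform diameter bound on each connected component of $\mathcal{U}_{(A,\Phi)}$ (also from Taubes), every centre $\zeta_j$ lies at distance $\ge R^{3/4}/2$ from the sphere.

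Next, combining the Bogomolny equation $*F_A=\nabla_A\Phi$ with the standard identification of $\deg(\Phi/|\Phi|)$ with the first Chern class of the $U(1)$-reduction line bundle yields the flux formula
\[
  \deg\tfrac{\Phi}{|\Phi|}\bigr|_{\partial\mathbb{B}(p_0,R)}=\tfrac{1}{4\pi}\int_{\partial\mathbb{B}(p_0,R)}\frac{\langle\Phi,\nabla_A\Phi\rangle\cdot\hat{n}}{|\Phi|}\,dS.
\]
Substituting \eqref{eq:multipole} and applying Gauss's theorem to the Coulomb-type leading terms $-\sum_j\alpha_j\,d|z-\zeta_j|^{-1}$, each of which contributes $4\pi\alpha_j$ or $0$ according as $\zeta_j$ lies inside or outside $\mathbb{B}(p_0,R)$, leaves a remainder controlled by $c_0\sum_j|z-\zeta_j|^{-3}$; integrated over a sphere of area $O(R^2)$ with $|z-\zeta_j|\gtrsim R^{3/4}$ this is bounded by $O_k(R^{-1/4})$, and the correction $|\Phi|^{-1}-1=O(R^{-3/4})$ is absorbed into the same error. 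Thus
\[
  \deg\tfrac{\Phi}{|\Phi|}\bigr|_{\partial\mathbb{B}(p_0,R)}=\sum_{\zeta_j\in\mathbb{B}(p_0,R)}\alpha_j\;+\;O_k(R^{-1/4}).
\]

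The hard part will be to extract a positive lower bound for this sum; this is where the bulk of the work lies. The plan is to apply the same flux/multipole argument to small spheres in the weak-field region surrounding individual components of $\mathcal{U}_{(A,\Phi)}$ one at a time: the integer degree on such a sphere equals $\alpha_j+O_k(R^{-1/4})$, but it also equals the number of zeros of $\Phi$ inside that component counted with local index, each index being $+1$ by the well-known positivity of zero indices of the Higgs field of an $\SU(2)$-monopole (a consequence of the Bogomolny equation). Since each component of $\mathcal{U}_{(A,\Phi)}$ contains at least one zero of $\Phi$, this gives $\alpha_j\ge 1/2$ for $R>R^\prime(k)$ large. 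Combined with the hypothesis $\mathbb{B}(p_0,R)\cap\mathcal{U}_{(A,\Phi)}\neq\emptyset$, which guarantees at least one $\zeta_j\in\mathbb{B}(p_0,R)$, we conclude $\sum_{\zeta_j\in\mathbb{B}(p_0,R)}\alpha_j\ge 1/2>|O_k(R^{-1/4})|$, so the integer degree in \eqref{pos.deg} must be $\ge 1$.
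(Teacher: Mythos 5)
Your reduction of the degree to $\sum_{\zeta_j\in\mathbb{B}(p_0,R)}\alpha_j + O_k(R^{-1/4})$ is sound, but the step where you deduce $\alpha_j\ge 1/2$ has a genuine gap, and it sits exactly at the heart of what the lemma establishes. The assertion that every local index of a Higgs-field zero is $+1$ for an $\SU(2)$ Bogomolny monopole is not a fact you may simply invoke: positivity of local winding is a theorem of Taubes for planar vortices, but there is no analogous ``well-known'' statement for three-dimensional monopoles in the references cited here, and positivity of the degree carried by any cluster of components of $\mathcal{U}_{(A,\Phi)}$ is precisely what Lemma~\ref{thm:pos.charge} itself asserts. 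Your argument is therefore essentially circular. Two subsidiary steps are also unjustified: you need a sphere lying in the weak-field region and enclosing a single component $U_j$, which the hypothesis does not supply (only $\partial\mathbb{B}(p_0,R)$ is known to be far from $\mathcal{U}_{(A,\Phi)}$; its components need not be pairwise separated by more than $2R(k)$), and you assert that each component contains a zero of $\Phi$, which does not follow from the definition of $\mathcal{U}_{(A,\Phi)}$ in terms of $\int|F_A|^2$.

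The paper's proof instead produces the needed lower bound from the curvature energy, bypassing zeros of $\Phi$ and individual multipole coefficients entirely. Using the Bogomolny equation and Stokes's theorem it rewrites, as in \eqref{calculation},
\[
  \int_V |F_A|^2\,dz \;=\; 2\pi\,c_1\bigl(L|_{\partial V}\bigr)\;-\;i\int_{\partial V}F_a\,\bigl(1-|\Phi|\bigr),
\]
where $L$ is the $\Phi$-eigenbundle over $\partial V$. The hypothesis $\mathbb{B}(p_0,R)\cap\mathcal{U}_{(A,\Phi)}\neq\emptyset$, together with the separation condition (which forces a unit ball about a point of $\widehat{\mathcal{U}}_{(A,\Phi)}$ to lie inside $V$), gives $\int_V|F_A|^2>\tfrac12\kappa(k)$, while the multipole estimates \eqref{eq:multipole} bound the boundary correction by $O_k(R^{-1/4})$. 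This forces $c_1>0$ once $R>R'(k)$, with no information about zeros of $\Phi$ or the $\alpha_j$ needed --- it is the magnetic energy trapped inside the ball, not the degree around it, that does the work.
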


\begin{proof} First of all, we need to do a small calculation: If $V$ is an open set having the property that $|\Phi|>0$ near $\partial V$, we may diagonalise $\Phi$ near $\partial V$ and obtain
 \begin{equation}\label{calculation}\begin{aligned}
    \int_V \big|F_A(z)\big|^2dz
     &= \tfrac{1}{2} \int_V \tr \big(F_A \wedge \ast F_A\big)
      = \tfrac{1}{2} \int_V d_A \tr \big(F_A \wedge \Phi\big) \\
     &= \tfrac{1}{2} \int_{\partial V} \tr\big(F_A \wedge \Phi\big)
      = \int_{\partial V} i F_a \varphi \\
     &= 2\pi c_1\big(L|_{\partial V}\big) - i\int_{\partial V} F_a (1-|\Phi|) \;,
 \end{aligned}\end{equation}
where, near $\partial V$, $\Phi = \mathrm{diag}(i\varphi,-i\varphi)$ and
 \[ F_A = \mathrm{diag}(F_a,-F_a) + \{\text{\emph{off-diagonal terms}}\} \;, \]
both with respect to the bundle decomposition $\mathrm{ad}E = L \oplus L^{-1}$ into eigenbundles of $\Phi$, and where $c_1(L)$ denotes the Chern number of $L$. Now suppose \eqref{condition} is satisfied for some $p_0 \in \RR^3$. Then, combining \eqref{calculation} with \eqref{eq:strong.field}, we get
 \[ \kappa(k) < 4 \pi \, c_1\big(L|_{\partial \mathbb{B}(p_0,R)}\big) - 2i\int_{\partial \mathbb{B}(p_0,R)} F_a\big(1-|\Phi|\big) \;. \]
The latter term is real and can be bounded using \eqref{eq:multipole}:
 \begin{equation}\begin{aligned}
  \left| 2 i\int_{\partial \mathbb{B}(p_0,R)} F_a\big(1-|\Phi|\big) \right|
     &\le 2\big|\partial \mathbb{B}(p_0,R)\big| \max \Big( \big|F_A\big| \, \big|1-|\Phi|\big|\Big) \\
     &\le 8\pi R^2 \,\cdot\,c_0(k)\, R^{-9/4} = c \, R^{-1/4} \;,
 \end{aligned}\end{equation}
where $c$ depends on $k$ only. Thus, we arrive at
 \[ 4\pi \, c_1\big(L|_{\partial \mathbb{B}(p_0,R)}\big) > \kappa(k) - c\,R^{-1/4} \;. \]
Then, if we choose $R^\prime(k) = \max\big\{R(k),\big(\tfrac{c}{\kappa(k)}\big)^4\big\}$ and $R \ge R^\prime(k)$, we obtain $c_1(L|_{\partial \mathbb{B}(p_0,R)}) > 0$ and consequently \eqref{pos.deg}.
\end{proof}

Thus, whenever we can put some number of connected components of the strong-field region in a ball whose boundary is sufficiently far away from the strong-field region, this ball contains positive charge. In particular, any collection of components of the strong-field region which is widely separated from the rest carries positive charge.

\begin{dfn}\label{def:decomposables} The set of \emph{decomposable monopoles} of type $a=(k_1,\dotsc,k_n)$, size $R>0$ and separation $\ve>0$ is the set $A_a(R,\ve)$ of pairs $(A,\Phi)$ as above for which there are $p_1,\dotsc,p_n \in \RR^3$, so that:
 \begin{enumerate}
  \item $\RR^3 \setminus \bigcup_{j=1}^l \mathbb{B}(p_j,R) \subset \mathcal{W}_{(A,\Phi)}$
  \item $\mathrm{deg} \frac{\Phi}{|\Phi|}\big|_{\partial \mathbb{B}(p_j,R)} = 2\pi k_j$
  \item $\sum_{i < j} |p_i-p_j|^{-1} < \ve$
 \end{enumerate}
\end{dfn}

The definition of the weak-field region $\mathcal{W}_{(A,\Phi)}$ lifts to $\cM_k$, as does the definition of $A_a(R,\ve)$, resulting in sets of centered monopoles $\cA_a(R,\ve)$, which we call decomposable monopoles as well. Notice that if $a = k$ is the trivial type, $\cA_k(R,\ve)$ does not depend on the separation parameter $\ve$ (as there is but a single $p_j$), we will sometimes denote this set by $\cA_k(R)$.

\begin{prop}\label{thm:decomposables.cover} For each $k \in \bN$, there are $M(k) \in \bN$, $\ve(k)>0$ and radii $R_0(k)$, $\dotsc$, $R_{M(k)}(k)$, $R_j(k) > R^\prime(k)$ with $R^\prime(k)$ as in Lemma \ref{thm:pos.charge}, so that
 \begin{equation}\label{eq:mon.cover}
  \cM_k \subset \cA_k(R_0(k)) \cup
   \bigcup_{a} \bigcup_{j=1}^{M(k)} \cA_a\big(R_j(k),\ve(k)\big) \;,
 \end{equation}
where $a$ runs over all proper types of $k$.
\end{prop}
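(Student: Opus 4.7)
The plan is to combine the Taubes decomposition with a hierarchical clustering of the strong-field components, using a pigeonhole across finitely many length scales to locate, for each monopole, a clustering scale at which a valid cluster decomposition can be extracted.

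Given $(A,\Phi) \in \cC_{\Fr}$ representing a point in $\cM_k$, Taubes's theory recalled above provides a strong-field region $\mathcal{U}_{(A,\Phi)}$ with $N \le N(k)$ connected components of individual diameter bounded by some $D(k)$, whose centers of mass $\zeta_1,\ldots,\zeta_N$ lie in a fixed ball (since the monopole is centered). I choose clustering scales $\rho_0(k) < \rho_1(k) < \cdots < \rho_{N(k)}(k)$ of the form $\rho_{j+1}(k) = C(k)\,\rho_j(k)$, with $\rho_0(k)$ and the inflation factor $C(k)$ chosen large in terms of $k$, $R'(k)$, $D(k)$ and a target $\ve(k)$. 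For each monopole I form the partitions $\pi_0 \le \pi_1 \le \cdots \le \pi_{N(k)}$ of $\set{1,\ldots,N}$ whose blocks are the connected components of the graph on $\set{\zeta_i}$ with edges joining pairs at Euclidean distance less than $\rho_j(k)$. Since this chain can strictly coarsen at most $N(k)-1$ times, there exists $j$ with $\pi_j = \pi_{j+1}$; at such a stable $j$ the clusters have Euclidean diameter at most $N(k)\,\rho_j(k) + D(k)$, while distinct clusters are separated by at least $\rho_{j+1}(k) = C(k)\,\rho_j(k)$.

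I then take $p_l$ to be the barycenter of the centers $\zeta_i$ in each cluster $C_l$, and define the ball radius $R_j(k) := 2 N(k)\,\rho_j(k) + D(k) + R(k)$. The three conditions of Definition~\ref{def:decomposables} now follow: (1) coverage is immediate by construction; (2) positivity of the topological charge on each $\partial \mathbb{B}(p_l, R_j(k))$ follows from Lemma~\ref{thm:pos.charge}, provided $\rho_0(k)$ is large enough that $R_j(k) > R'(k)$ and $C(k)$ is large enough that $\dist(\partial \mathbb{B}(p_l, R_j(k)), \mathcal{U}_{(A,\Phi)}) > R_j(k)^{3/4}$, both for the ball's own cluster (the buffer being of order $\rho_j(k)$) and for the other clusters (which lie at distance $\ge \rho_{j+1}(k)$ from $p_l$); (3) $|p_i - p_l| \ge (C(k) - 2 N(k))\,\rho_j(k)$ yields $\sum_{i<l} |p_i - p_l|^{-1} \le \binom{N(k)}{2} / \bigl((C(k) - 2 N(k))\,\rho_0(k)\bigr) < \ve(k)$ once $\rho_0(k)$ is sufficiently large. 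The type $a = (k_1,\ldots,k_n)$ is then read off from the positive degrees on the individual balls; in the degenerate case when $\pi_0$ is already the single-block partition, the monopole lies in $\cA_k(R_0(k))$.

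The principal obstacle is the bookkeeping needed to make the various inequalities simultaneously compatible across all possible cluster structures, with a uniform choice of $\rho_0(k)$ and $C(k)$ depending only on $k$ (and the target $\ve(k)$). Once this is arranged, setting $M(k) := N(k)$ and combining the finite list of radii $R_j(k)$ with the finite collection of proper types $a$ of $k$ produces the required finite cover~\eqref{eq:mon.cover}.
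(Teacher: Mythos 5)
Your proposal takes essentially the same approach as the paper: both rely on Taubes's bound of at most $N(k)$ strong-field components of bounded diameter, coarsen a partition of those components at most $N(k)-1$ times to reach a stable clustering, and invoke Lemma~\ref{thm:pos.charge} to assign positive charge to each cluster ball. Your fixed geometric ladder of scales $\rho_j = C(k)^j\rho_0(k)$ with a pigeonhole for a stable level is a mild repackaging of the paper's bottom-up merging algorithm with adaptively growing radii; the ``bookkeeping'' you defer is precisely what that algorithm carries out explicitly (and, for completeness, one should take $R_0(k) = \max_j R_j(k)$ so that a stable but single-block level falls into $\cA_k(R_0(k))$).
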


\begin{proof} The proof is constructive, to each monopole we associate one of the sets $\cA_a\big(R_j(k),\ve(k)\big)$. The principle is as follows: Given a monopole $m$, take its strong field region $\mathcal{U}_m$ and cover it by balls, checking whether the boundaries of these are sufficiently far away from the strong-field region. If not, consecutively enlarging the balls and grouping together more connected components of $\mathcal{U}_m$, we show that in the end we arrive at a cover as in Definition \ref{def:decomposables}. Hereby, the estimates from Taubes and Proposition \ref{thm:pos.charge} will be essential. Then, we argue that a finite set of radii and one single $\ve$ is enough to achieve this.

So let $(A,\Phi)$ be a charge $k$ monopole and $\mathcal{U}_{(A,\Phi)}$ be its strong-field region. There are numbers $N=N(k)$ and $d=d(k)$ so that $\mathcal{U}_{(A,\Phi)}$ has at most $N$ connected components, say $U_1,\dotsc,U_N$, and each of them has diameter bounded above by $d$. (This follows readily from the definition of the strong-field region and the fact that the total curvature of $A$ is $2\pi k$.) We will now describe an algorithm which gives centres and balls as in Definition \ref{def:decomposables}. For definiteness, let us be detailed here. We will denote the different steps in the algorithm by a letter $t$ (used as superscripts). Moreover, the letter $\omega$ will denote auxiliary partitions not directly related to the partitions $\lambda$ or the types $a$.

 \begin{enumerate}\setcounter{enumi}{-1}
  \item Define the following data:
         \begin{align*}
          \omega^0 &= \Big\{\, \{1\},\dotsc,\{N\}\,\Big\} \quad,&
          d^0 &= \max\{d,15\} \quad, \\
          R^0 &= d^0 + R^\prime(k) + 1 \quad,&
          \gamma^0 &= 3R^0 - \tfrac{1}{2}d^0 \quad,\\
          p_j^0 &= \big|U_j\big|^{-1}\int_{U_j} z\, dz \quad,&
          \mathbb{B}_j^0 &= \mathbb{B}(p_j^0,R^0)
         \end{align*}
      and set $t=0$.
  \item Partition the set $I^t = \{1,\dotsc,\row(\omega^t)\}$ as follows: For each $j \in I^t$, let
         \[ I_j^t = \{\, j^\prime \,:\, \big|p_j^t - p_{j^\prime}^t\big| \le \gamma^t\,\} \]
      and then let $\omega^{t+1}$ be the finest partition of $I^t$ so that each $I_j^t$ is contained in a single block of $\omega^{t+1}$.
  \item If $\omega^{t+1}$ consists of singletons only, stop. Else continue with step (3).
  \item Define the next batch of data:
         \begin{align*}
          d^{t+1} &= \Big(\max_{1 \le j \le \row(\omega^{t+1})} |\omega_j^{t+1}|
                          - 1\Big)\gamma^t + 2R^t \quad,&&  \\
          R^{t+1} &= d^{t+1} + 1 \quad,&
          \gamma^{t+1} &= 3R^{t+1} - \tfrac{1}{2}d^{t+1} \quad,\\
          p_j^{t+1} &= |\omega_j^{t+1}|^{-1}\sum_{j^\prime \in \omega_j^{t+1}}p_{j^\prime}^t \quad,&
          \mathbb{B}_j^{t+1} &= \mathbb{B}(p_j^{t+1},R^{t+1}) \quad,
         \end{align*}
      do $t \mapsto t+1$ and repeat from step (1).
 \end{enumerate}
Let us look closer at this algorithm. Step (0) is initialisation of data. Here, blocks of $\omega^0$ correspond to the connected components $U_1,\dotsc,U_N$ whose separation we need to check, $R^0$ is already chosen sufficiently large so that we can apply Lemma \ref{thm:pos.charge}, $\gamma^0$ is a separation threshold against which we check and the remaining data is either self-explanatory or auxiliary. In step (1), members of the same block in $\omega^t$ correspond to sets of connected components of $\mathcal{U}_{(A,\Phi)}$ that cannot be widely separated in the sense of Lemma \ref{thm:pos.charge}, while the distance between members of different blocks is sufficiently large. Hence, if $\omega^t$ consists of singletons only, this corresponds to a widely separated cluster, and we can halt in step (2). In step (3), we enlarge the radii (so as to be able to put all components that were not sufficiently far away from each other into single balls) and correspondingly increase the separation threshold $\gamma^t$.

In each run, we either obtain $\row(\omega^{t+1}) < \row(\omega^{t})$ or else the algorithm stops in step (2). Since $\row(\omega^0) = N \le N(k)$ is uniformly bounded, after at most $N(k)-1$ runs, the algorithm stops. Say it stops after ${t_0}$ runs and we end up with data $d^{t_0}$, $R^{t_0}$, $\gamma^{t_0}$, $p_1^{t_0},\dotsc,p_n^{t_0}$, $\mathbb{B}_1^{t_0},\dotsc,\mathbb{B}_n^{t_0}$.
By choice of $d^0$, $p_j^0$ and $R^0$, the union of the balls $\mathbb{B}_j^0$ covers the $R(k)$-neighbourhood of $\mathcal{U}_{(A,\Phi)}$ and consequently we have $\RR^3 \setminus \bigcup_j \mathbb{B}_j^0 \subset \mathcal{W}_{(A,\Phi)}$. Moreover, the construction of $\omega^{t+1}$ and the choice of $d^{t+1}$ and $R^{t+1}$ ensure that for each $j^\prime$ there is $j$ with $\mathbb{B}_{j^\prime}^{t} \subset \mathbb{B}_j^{t+1}$. Thus,
 \[ \RR^3 \setminus \bigcup_{j=1}^n \mathbb{B}_j^{t_0} \subset \mathcal{W}_{(A,\Phi)} \;. \]
As we stop if and only if $\omega^{{t_0}+1}$ consists of singletons only, we have $|p_j^{t_0} - p_i^{t_0}| > \gamma^{t_0}$ for all $1 \le i < j \le n$ and since $\gamma^{t_0} - 2R^{t_0} = R^{t_0} - \tfrac{1}{2}d^{t_0} = \tfrac{1}{2}d^{t_0} + 1 > 0$, the balls $\mathbb{B}_j^{t_0}$ are mutually disjoint.

Now, as $R^0 > 16$ and $R^{t+1} \ge R^t$, we have $1-\big(R^{t_0}\big)^{-1/4} > \tfrac{1}{2}$ and jointly with $R^{t_0} > d^{t_0}$ this implies $R^{t_0} - \tfrac{1}{2}d^{t_0} > \big(R^{t_0}\big)^{3/4}$. Then,
 \begin{equation}\begin{aligned}
  \dist\Big(\partial \mathbb{B}_j^{t_0}, \mathcal{U}_{(A,\Phi)} \cap \mathbb{B}_j^{t_0}\Big)
  &> \dist\Big(\partial \mathbb{B}_j^{t_0}, \mathbb{B}(p_j^{t_0},\tfrac{1}{2}d^{t_0})\Big) \\
  &= R^{t_0} - \tfrac{1}{2}d^{t_0} > \big(R^{t_0}\big)^{3/4}
 \end{aligned}\end{equation}
and, for $i \neq j$,
 \begin{equation}\begin{aligned}
  \dist\Big(\partial \mathbb{B}_j^{t_0}, \mathcal{U}_{(A,\Phi)} \cap \mathbb{B}_i^{t_0}\Big)
  &> \dist\Big(\partial \mathbb{B}_j^{t_0}, \partial \mathbb{B}_i^{t_0}\Big) \\
  &> \gamma^{t_0} - 2 R^{t_0} = R^{t_0} - \tfrac{1}{2}d^{t_0} > \big(R^{t_0}\big)^{3/4} \;.
 \end{aligned}\end{equation}
Which shows that $\dist\big(\mathbb{B}_j^{t_0},\mathcal{U}_{(A,\Phi)}\big) > \big(R^{t_0}\big)^{3/4}$ and by Lemma \ref{thm:pos.charge} that the Higgs-field has positive degree $2\pi k_j$ around each $\mathbb{B}_j^{t_0}$, for some $1 \le k_j \le k$.
As the complement of the union of the balls is contained in the weak-field region, these add up to the total charge $k$ and we obtain a type $a=(k_1,\dotsc,k_n)$ of $k$ of length $n$.
Finally, we note that $\big|p_j^{t_0} - p_i^{t_0}\big| > \gamma^{t_0}$ implies $\sum_{i < j} \big|p_j^{t_0} - p_i^{t_0}\big|^{-1} < n^2 (\gamma^{t_0})^{-1}$ and so we have $(A,\Phi) \in A_a\big(R^{t_0},n^2 (\gamma^{t_0})^{-1}\big)$.

Apart from the centres $p_j^0$, the initial data is independent of the choice of $(A,\Phi)$ and in fact depends on $k$, only. ($\omega^0$ depends on $(A,\Phi)$, but since $N \le N(k)$ is uniformly bounded, this can be remedied by repeating one of the connected components $U_j$ sufficiently many times.) Looking at step (3) which defines the next batch of data, we see that only a finite number $M(k)$ of different radii may arise, depending on the maximal length of the blocks of the $\omega^{t}$. These are the radii $R_1(k),\dotsc, R_{M(k)}(k)$. $R_0(k)$ can be chosen as the maximum, $R_0(k) = \max_{1 \le j \le M(k)} R_j(k)$, since clearly $A_k(R,\ve) \subset A_k(R^\prime,\ve)$ for $R < R^\prime$.
Similarly, we can only encounter a finite number of $\gamma^{t_0}$'s and may take their minimum, say $\gamma$. (This is in fact $\gamma^0$ since $\gamma^t \le \gamma^{t+1}$.)
As we also have the trivial inclusion $A_a(R,\ve^\prime) \subset A_a(R,\ve)$ whenever $\ve^\prime < \ve$, we can choose $\ve(k) = \frac{k^2}{\gamma}$ to obtain sets $A_k(R_0(k))$ and $A_a(R_j(k),\ve(k))$ whose union contains all possible $(A,\Phi)$.
Factoring out by translations and the framed gauge group, we arrive at the claim.
\end{proof}

Given a charge $k$ monopole $(A,\Phi)$, we call the data $a$, $p_1,\dotsc,p_n$ and $R$ obtained in this construction the \emph{decomposition data}, i.e., $p_j = p_j^M$ and $R = R_M$. Since types are unordered, we need to consider the configuration $p_1,\dotsc,p_n$ as being unordered as well.

\begin{rmk} Notice that by definition of the sets $\cA_a(R,\ve)$, there is a map
 \[ \cA_a(R,\ve) \longrightarrow (0,\ve) \times B^\prime_{\lambda k} \;, \]
mapping a monopole to the unordered configuration $p_1,\dotsc,p_n$:  The left hand side of item (3) of Definition \ref{def:decomposables} defines a boundary defining function $\rho^\prime_{\lambda k}$ for $B^\prime_{\lambda k}$. Part of Theorem \ref{cpct.conj} is then that, at infinity, this map yields a fibration with base $B^\prime_{\lambda k}$. What we have shown so far is that there is a set $K = \cA_0(R_0(k)) \subset \cM_k$ and a well-defined map
 \begin{equation}\label{eq:cluster.config}
  \cM_k \setminus K \longrightarrow \bigcup_{a}\, (0,\ve) \times B_a \;,
 \end{equation}
associating a `cluster configuration' (of a proper type) to each monopole outside of the core region. In the next section, we will see that this core region $K$ is in fact relatively compact.
\end{rmk}

\subsection{Limits of Decomposable Sequences}\label{S:limits.decomp} Given any sequence $m^{s^\prime} \in \cM_k$, it is clear that there is a subsequence $m^s$ and a set $\cA_a(R,\ve) = \cA_a(R_j(k),\ve(k))$ so that $m^s \in \cA_a(R,\ve)$ for all $s \in \bN$. Let the respective decomposition data be $a$, $p_1^s,\dotsc,p_n^s$, $R$ and define a sequence in $\RR_+$ by
 \begin{equation}\label{eq:base.sqn}
  \ve^s := \rho^\prime_{\lambda k}(p_1^s,\dotsc,p_n^s) = \sum_{i<j} \big|p_i^s-p_j^s\big|^{-1} < \ve \;.
 \end{equation}
Comparing the results of \cite[Prop 3.8]{AH} and of Proposition \ref{thm:decomposables.cover}, it is not difficult to see that they necessarily lead to the same type $a$ of $k$ and to sequences of bounded distances: $\big|p_j^s - w_j^s\big| \le c$ for all $s \in \bN$, where $w_j^s$ denotes the set of sequences obtained from \cite[Prop. 3.8]{AH}. The reason for this is that the sequences $w_j^s$ of \cite{AH} arise as sequences of zeroes of the Higgs-fields and that Taubes' estimates \eqref{eq:taubes.c.1.4} show that a zero of the Higgs-field is contained in the complement of the weak-field region and thus, in our case, in one of the balls $\mathbb{B}(p_j^s,R)$. The same line of thought, applied to the case of $a = (k)$, yields the following:

 \begin{lem}\label{thm:bdd.Higgs} Let $m^s$ be a sequence of charge $k$-monopoles so that $\big(\RR^3\setminus \mathbb{B}(p,R) \big) \subset \cW_{m^s}$ for all $s \in \bN$. Then, $m^s$ has a convergent subsequence. In particular, the sets $\cA_k(R_0)$ are relatively compact.
 \end{lem}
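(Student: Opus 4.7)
The proof is by contradiction, combining the Atiyah--Hitchin decomposition result \cite[Prop.\ 3.8]{AH} with Taubes' pointwise estimates \eqref{eq:taubes.c.1.4}. Suppose for contradiction that no subsequence of $m^s$ converges in $\cM_k$. By \cite[Prop.\ 3.8]{AH}, after passing to a subsequence, $m^s$ is widely separated of some proper type $a=(k_1,\ldots,k_n)$ with $n\ge 2$: there exist points $w_1^s,\ldots,w_n^s \in \RR^3$ with $|w_i^s-w_j^s|\to\infty$ for $i\neq j$, together with centered charge-$k_j$ monopoles $m_j$, such that $m^s(\cdot - w_j^s)\to m_j$ on every fixed ball.

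The key step is then to exhibit a uniform bound on the $w_j^s$. As recalled in the paragraph preceding the lemma, the points $w_j^s$ arising in \cite[Prop.\ 3.8]{AH} are (or lie within bounded distance of) zeros of the Higgs field $\Phi^s$. But Taubes' estimate \eqref{eq:taubes.c.1.4} gives $|\Phi|>\tfrac{9}{10}$ throughout $\cW_{m^s}$, so every zero of $\Phi^s$ must lie in $\RR^3\setminus\cW_{m^s}$, which by the standing hypothesis is contained in the fixed ball $\mathbb{B}(p,R)$. Hence each $w_j^s$ lies within uniformly bounded distance of $p$, contradicting $|w_i^s-w_j^s|\to\infty$. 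This rules out the widely separated alternative, so a subsequence of $m^s$ converges.

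For the second assertion, let $m^s\in\cA_k(R_0)$ be any sequence. By definition of the trivial type $a=(k)$, there is a single point $p^s\in\RR^3$ with $\RR^3\setminus\mathbb{B}(p^s,R_0)\subset\cW_{m^s}$. Since elements of $\cM_k$ are centered (say, via the hyperK\"ahler moment map $c$) and since $p^s$ is controlled by the centers of mass of the connected components of $\cU_{m^s}$ (which are uniformly bounded by Taubes), there is a constant $C$, independent of $s$, with $|p^s|\leq C$. Consequently $\RR^3\setminus\mathbb{B}(0,C+R_0)\subset\cW_{m^s}$ for every $s$, and the first part of the lemma applies uniformly with $p=0$, $R=C+R_0$, yielding a convergent subsequence and therefore relative compactness of $\cA_k(R_0)$.

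The delicate point is the identification of the Atiyah--Hitchin cluster centers $w_j^s$ with bounded neighborhoods of Higgs-field zeros; this is precisely the comparison between the two decomposition pictures discussed at the start of \S\ref{S:limits.decomp}, and it is what allows Taubes' exponential-type control on $\cW_{m^s}$ to translate into an obstruction against wide separation.
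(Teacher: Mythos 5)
Your proof matches the approach the paper only alludes to: the paper gives no explicit proof of this lemma, stating merely that ``the same line of thought, applied to the case of $a=(k)$, yields the following,'' where the line of thought is the preceding paragraph's observation that the Atiyah--Hitchin cluster centres $w_j^s$ from \cite[Prop.\ 3.8]{AH} lie near zeros of the Higgs field, which by Taubes' estimates \eqref{eq:taubes.c.1.4} must lie outside the weak-field region, hence here inside the fixed ball $\mathbb{B}(p,R)$ --- incompatible with $|w_i^s - w_j^s|\to\infty$. You reconstruct exactly this contradiction argument for the first assertion, and for the relative compactness of $\cA_k(R_0)$ you correctly supply the uniform bound on the ball centre $p^s$ (via the centering of $\cM_k$ and Taubes' bound on the centres of mass of strong-field components), a step the paper leaves implicit.
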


Returning to our original sequence, we may use this to show the following dichotomy:

 \begin{prop}\label{thm:convergence.alternatives} Any sequence in $\cM_k$ has a subsequence $m^s$ so that either
 \begin{enumerate}
  \item there is $m \in \cM_k$ so that $m^s \longrightarrow m$ uniformly on compact subsets, or
  \item there is a set $\cA_a(R,\ve)$ so that $m^s \in \cA_a(R,\ve)$ for all $s \in \bN$ and $\ve^s \longrightarrow 0$, where $\ve^s$ is defined as in \eqref{eq:base.sqn}.
 \end{enumerate}
 In particular, if $m^s \in \cM_k$ is a sequence leaving any compact subset, then $\ve^s \longrightarrow 0$.
 \end{prop}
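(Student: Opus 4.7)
The plan is to combine the finite cover of Proposition \ref{thm:decomposables.cover} with the compactness of the trivial stratum (Lemma \ref{thm:bdd.Higgs}) and the Atiyah--Hitchin classification \cite[Prop.~3.8]{AH}, dichotomizing by the type $a$ and, in the non-trivial case, by whether or not $\ve^s \to 0$. Since the covering collection $\{\cA_k(R_0(k))\} \cup \{\cA_a(R_j(k), \ve(k))\}$ is finite by Proposition \ref{thm:decomposables.cover}, any sequence $(m^{s'})$ in $\cM_k$ has a subsequence $(m^s)$ lying entirely in one of its members, say $\cA_a(R,\ve)$.

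If $a = k$ is the trivial type, then $(m^s) \subset \cA_k(R_0(k))$, which is relatively compact by Lemma \ref{thm:bdd.Higgs}; a convergent sub-subsequence then gives alternative (1). Otherwise $a$ is a proper type and the decomposition data produces centres $p_1^s, \dots, p_n^s$ with $\ve^s$ defined as in \eqref{eq:base.sqn}. Since $0 < \ve^s < \ve$, pass to a subsequence along which $\ve^s \to \ve_\infty \in [0,\ve]$. If $\ve_\infty = 0$, this is precisely alternative (2). The remaining case is $\ve_\infty > 0$, which I will show forces alternative (1) after a further subsequence. Assuming for contradiction that no subsequence of $(m^s)$ converges in $\cM_k$, the sequence is divergent in the sense of \cite[Prop.~3.8]{AH}, so a further subsequence decomposes as widely separated monopoles of some type $a' = (k_1',\dots,k_{n'}')$ with Higgs-field zeroes $w_1^s, \dots, w_{n'}^s$ satisfying $|w_i^s - w_j^s| \to \infty$ for all $i \neq j$. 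The comparison argument summarized before Lemma \ref{thm:bdd.Higgs} — using Taubes' estimates \eqref{eq:taubes.c.1.4} to confine zeros of $\Phi$ to the strong-field region, hence to one of the balls $\mathbb{B}(p_j^s, R)$ — then forces $a' = a$ and, after relabelling, $|p_j^s - w_j^s| \leq c$ for all $s$. Consequently $|p_i^s - p_j^s| \geq |w_i^s - w_j^s| - 2c \to \infty$ for all $i \neq j$, so $\ve^s \to 0$, contradicting $\ve^s \to \ve_\infty > 0$. The final "In particular" is immediate: if $(m^s)$ leaves every compact subset of $\cM_k$, alternative (1) fails and alternative (2) must apply, whence $\ve^s \to 0$.

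The step I expect to require the most care is the comparison in subcase $\ve_\infty > 0$, where one must match the Atiyah--Hitchin cluster centres $w_j^s$ with the centres $p_j^s$ produced by the algorithm in Proposition \ref{thm:decomposables.cover}. The paper treats this as a direct consequence of Taubes' pointwise bounds on $|\Phi|$ and $|\nabla_A \Phi|$ on the weak-field region (which prevent zeros of $\Phi$ from occurring outside the $R$-neighbourhood of $\mathcal{U}_{(A,\Phi)}$), together with the positivity of the topological charge enclosed by each ball (Lemma \ref{thm:pos.charge}) to match the block sizes $k_j$ on both sides. Verifying that no Higgs-zero can escape the union of the balls $\mathbb{B}(p_j^s, R)$, and that the resulting bijection between the $w_j^s$-centres and the $p_j^s$-centres preserves block sizes, is the technical core of the argument, but it reduces to bookkeeping with the estimates already recorded in \eqref{eq:taubes.c.1.4}--\eqref{eq:multipole}.
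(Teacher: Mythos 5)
Your proposal follows a different route than the paper's. The paper's proof does not invoke the Atiyah--Hitchin decomposition at all: it argues directly that if $\ve^s \ge c > 0$ uniformly, then the pairwise separations $|p_i^s - p_j^s|$ are uniformly bounded, so all the balls $\mathbb{B}(p_j^s,R)$ fit inside a single ball $\mathbb{B}(\tilde p,\wt R)$, and Lemma~\ref{thm:bdd.Higgs} then supplies a convergent subsequence. Your version replaces this with a proof by contradiction through \cite[Prop.~3.8]{AH}, matching the Atiyah--Hitchin cluster centres $w_j^s$ against the algorithm's centres $p_j^s$ and concluding $\ve^s \to 0$. That is a genuinely different chain of reasoning, and it invokes machinery (the full AH classification) that the paper's proof of this proposition deliberately sidesteps by reducing to Lemma~\ref{thm:bdd.Higgs}, which already encapsulates the Taubes estimates.

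There is, however, a real gap in your key step. You assert that the comparison argument ``forces $a' = a$.'' This is not automatic, and in fact can fail: the algorithm of Proposition~\ref{thm:decomposables.cover} declares two centres $p_i^s, p_j^s$ to belong to distinct blocks as soon as $|p_i^s - p_j^s| > \gamma^{t_0}$, a \emph{fixed} threshold, whereas the Atiyah--Hitchin decomposition only separates clusters whose mutual distances diverge. Thus a divergent sequence of charge-$3$ monopoles with two unit-charge strong-field components held at distance $\approx \gamma^{t_0} + 1$ and the third drifting to infinity is assigned type $a = (1,1,1)$ by the algorithm (so $\ve^s \to 1/(\gamma^{t_0}+1) > 0$), while its AH decomposition has type $a' = (2,1)$ with $n' < n$. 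In that situation your injective matching $\{w_j^s\} \hookrightarrow \{\text{balls}\}$ is not a bijection, the relabelling does not exist, and the conclusion $|p_i^s - p_j^s| \to \infty$ for \emph{all} $i \neq j$ fails for the pair $(1,2)$. You should be aware this is precisely where the argument needs repair rather than bookkeeping; in particular it shows that the claim $a' = a$ cannot be read off directly from confining Higgs-field zeros to balls, since several balls may cluster around a single AH centre. (The paper's own one-line inference ``$\ve^s \geq c \Rightarrow |p_i^s - p_j^s| \leq c^{-1}(1+\ve)$'' is susceptible to the same example and is stated without justification, so this is an area where both arguments would benefit from an explicit treatment of the over-refined case, e.g.\ by coarsening the decomposition data before defining $\ve^s$.)
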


\begin{proof} Suppose there is $c>0$ so that $\ve^s \ge c$ for all $s \in \bN$. Then, $|p_i^s - p_j^s| \le c^{-1}(1+\ve)$ is uniformly bounded and we can find a single center of mass $\tilde{p} \in \RR^3$ and a single radius $\wt R > R_0$ so that $\mathbb{B}(p_j^s,R) \subset \mathbb{B}(\tilde{p},\wt R)$ for all $1 \le j \le n$. Lemma \ref{thm:bdd.Higgs} then shows that there is a subsequence converging in $\cM_k$. Thus, one and only one of items (1) or (2) holds.

This also shows that bounding $\rho^\prime_{\lambda k}$ from below defines a relatively compact subset in $\cM_k$. Whence if we have a sequence that leaves any compact subset, we necessarily have a subsequence on which $\ve^s = \rho^\prime_{\lambda k}(p_1^s,\dotsc,p_n^s) \longrightarrow 0$.
\end{proof}

Thus, the `non-compactness' of $\cM_k$ is completely described by families of monopoles for which $\ve^s \longrightarrow 0$. But these are precisely the families approaching one of the boundary hypersurfaces from Theorem \ref{cpct.conj}.

\begin{thm}\label{thm:mon.cpct} $\Mon_k$ is compact.
\end{thm}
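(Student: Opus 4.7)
The plan is to proceed by induction on $k$. The base case $k=1$ is trivial since $\Mon_1 = \cM_1 \cong S^1$. For the inductive step, I would assume that $\Mon_j$ has been constructed and is compact for all $j < k$, and show that $\Mon_k$ is compact.

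The first reduction is to observe that each candidate boundary hypersurface $\cI_a$ (for a proper type $a = (k_1,\dots,k_n)$ of $k$) is compact. By Theorem~\ref{cpct.conj}, $\cI_a = \cI_\nu / \Sym_\nu$ where $\cI_\nu = \Mon_\nu \times_{\TT^{\row(\nu)}} \cT_{\nu k}$ and $\Mon_\nu = \Mon_{k_1} \times \cdots \times \Mon_{k_n}$. Since $a$ is proper, each $k_j < k$, so by the inductive hypothesis each $\Mon_{k_j}$ is compact and thus so is $\Mon_\nu$. The Gibbons--Manton bundle $\cT_{\nu k}$ is a torus bundle over $\FB_{\nu k}$, which is a boundary hypersurface of the compact MWC $\MB(D_0/D_k)$ and hence compact. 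Taking the $\TT^{\row(\nu)}$-quotient and then the finite $\Sym_\nu$-quotient preserves compactness.

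Now let $x^s$ be an arbitrary sequence in $\Mon_k$. Since $\Mon_k$ has only finitely many boundary hypersurfaces (indexed by proper partitions of $k$), after passing to a subsequence we may assume either that every $x^s$ lies in some fixed boundary hypersurface $\cI_a$, in which case the previous paragraph yields a convergent subsequence, or that every $x^s = m^s$ lies in the interior $\cM_k$. In the latter case, Proposition~\ref{thm:convergence.alternatives} provides a further subsequence satisfying one of two alternatives. If the first alternative holds, $m^s$ converges in $\cM_k \subset \Mon_k$ and we are done. Otherwise, by Proposition~\ref{thm:decomposables.cover} we may assume $m^s \in \cA_a(R,\ve)$ for some fixed proper type $a$, with decomposition data $(p_1^s,\dots,p_n^s)$ satisfying $\ve^s := \sum_{i<j}|p_i^s - p_j^s|^{-1} \to 0$.

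It remains to extract a limit in $\cI_a$. For the base direction, the unordered configurations $[(p_1^s,\dots,p_n^s)]$ define a sequence in $B_a = \FB_{\nu k}/\Sym_\nu^0$ under the map \eqref{eq:cluster.config}; since this base is compact and $\ve^s \to 0$, a subsequence converges to a limit point $b \in B_a$. For the fibre direction, a sufficient enlargement of the argument behind Lemma~\ref{thm:bdd.Higgs} (together with Lemma~\ref{thm:pos.charge}, which identifies each cluster $\mathbb{B}(p_j^s,R)\cap\cU_{m^s}$ as carrying charge $k_j > 0$) realises the recentered clusters $m^s(\cdot - p_j^s)$ as sequences of charge-$k_j$ framed monopoles modulo translation; by the inductive hypothesis on $\Mon_{k_j}$, a further subsequence converges in $\Mon_\nu$. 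The relative phases of the framings are captured precisely by the Gibbons--Manton torus $\cT_{\nu k}$, and by compactness of $\cT_{\nu k}$ these also subsequentially converge. Together these produce a limit in $\Mon_\nu \times_{\TT^{\row(\nu)}} \cT_{\nu k}/\Sym_\nu = \cI_a$, completing the induction.

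The main obstacle is the very last step: verifying that this combined datum (limiting configuration $b$, limiting recentered monopoles in $\Mon_\nu$, and limiting torus element in $\cT_{\nu k}$) really is a limit of $m^s$ with respect to the topology of the MWC $\Mon_k$ near $\cI_a$. This is precisely the content of the neighbourhood description of $\cI_a$ in $\Mon_k$ promised in Theorem~\ref{cpct.conj} (the model neighbourhood being $[0,\delta)_{\ve} \times \cI_a$ locally via a $\Phi$-structure), and rests on the detailed construction in Part~II: once that neighbourhood structure is in hand, subsequential convergence of the base configuration, the recentered fibre monopoles, and the Gibbons--Manton framings is equivalent to convergence in $\Mon_k$.
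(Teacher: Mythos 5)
Your proposal follows essentially the same route as the paper's proof: reduce to showing every sequence in $\cM_k$ subconverges in $\Mon_k$, invoke Proposition~\ref{thm:convergence.alternatives} to split into interior convergence versus $\ve^s \to 0$, and in the latter case extract a limiting ideal configuration $\xi$ plus recentered cluster monopoles converging in the fibre, then appeal to the structure of $\Mon_k$ near $\cI_a$ from Theorem~\ref{cpct.conj} to conclude. The only stylistic differences — making the induction on $k$ and the compactness of each $\cI_a$ explicit, and separating out the Gibbons--Manton torus factor (which the paper packages into the single phrase ``the ideal data $(\xi, m_1,\dots,m_n)$ defines an element of $\cI_a$'') — are harmless elaborations, and you correctly flag that the final step rests on the Part~II construction.
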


\begin{proof} Since its boundary hypersurfaces are compact, it clearly suffices to show that every sequence in $\cM_k$ has a subsequence that converges in $\Mon_k$. By Proposition \ref{thm:convergence.alternatives}, on a subsequence $m^s$ we either have convergence in $\cM_k$ or we have $\ve^s \longrightarrow 0$. So let us assume the latter is true and that we are in the situation of item (2) of Proposition \ref{thm:convergence.alternatives}.

Since $\ve^s$ is the value of the boundary defining function $\rho^\prime_{\lambda k}$ for $B^\prime_{\lambda k}$ in its respective many-body compactification, we see that the sequence $(p_1^s,\dotsc,p_n^s)$ converges to an element $\xi \in B^\prime_{\lambda k}$. Moreover, there are $m_j \in \cM_{k_j}$ so that the translates of $m^s$ by $p_j^s$ converge to $m_j$: $T_j^*m^s \longrightarrow m_j$ uniformly on compact subsets.
But since the ideal data $(\xi, m_1,\dotsc,m_n)$ defines an element of $\cI_a$, (2) implies that there is a neighbourhood $U_a$ of $\cI_a$ and $s_0 \in \bN$ so that, for all $s \ge s_0$, we have $m^s \in U_a$. As any such neighbourhood is relatively compact in $\Mon_k$, there is a further subsequence that converges in $\ol{U}_a \subset \Mon_k$.
\end{proof}

Let us compare this to earlier results: In Theorem \ref{cpct.conj}, we claim that $\Mon_k$ carries an iterated boundary fibration, cf.\ Section \ref{sec.main.thm}. Moreover, we have shown in Proposition \ref{thm:ibf.cover} that for any compact MWC with IBF, there is a cover $\{W_\lambda\}$ of a neighbourhood of its boundary so that $W_\lambda \cap W_\mu = \emptyset$ if $\lambda$ and $\mu$ are not comparable and so that the boundary fibrations $\phi_\lambda : N_\lambda \longrightarrow B_\lambda$ restrict to be surjective with fibres contained in a relatively compact subset of the interior of the unrestricted fibres.

Since the sets $\cA_a(R_j,\ve)$ cover $\cM_k$ up to a relatively compact subset, their closures in $\Mon_k$ clearly cover a neighbourhood of $\partial \Mon_k$. Looking at the definition of the sets $\cA_a(R,\ve)$ it is clear that an intersection $\cA_a(R,\ve) \cap \cA_b(R^\prime,\ve)$ is non-empty if and only if the types $a$ and $b$ are comparable: Assume $R \le R^\prime$ and that the intersection is not empty. Since there are sets of balls $\{\mathbb{B}(p_j,R)\}$ and $\{\mathbb{B}(p_j^\prime,R^\prime)\}$ covering the strong-field region of the same monopole, by looking at intersections we obtain a surjective map $\{p_j\} \longrightarrow \{p_j^\prime\}$ and in this way $a \le b$.

For the moment, we will not show that the restrictions of the boundary fibrations are still surjective (this will be shown in Part II), but consider the fibres instead. If $m^s \in \cA_a(R,\ve)$ for all $s$, by not allowing the radius $R$ of the balls $\mathbb{B}(p_j^s,R)$ to grow along the sequence, we exclude the possibility of the monopole `falling apart further'. Asymptotically speaking, in terms of fibres $\Mon_{\lambda k}$ and bases $B^\prime_{\lambda k}$, the sequence ends up in and stays in the set
 \[ \cA_{k_1}(R) \times \dotsm \times \cA_{k_n}(R) \subset \cM_{k_1} \times \dotsm \times \cM_{k_n} \subset \Mon_{\lambda k} \;, \]
which by Lemma \ref{thm:bdd.Higgs} and Theorem \ref{cpct.conj} is a relatively compact subset of the interior of the fibre $\Mon_{\lambda k}$. Thus, the cover \eqref{eq:mon.cover} is a cover of the type constructed in Proposition \ref{thm:ibf.cover}.


\section{Proof of the Sen Conjecture, Coprime Case}\label{psen}

Let us now use Theorem \ref{cpct.conj}, Proposition \ref{prop.mrs.bounded.cover} and results from \cite{SS} to prove the coprime case of the Sen Conjecture.

As in Section \ref{sec.background}, let $\unicov{k}$ denote the space
of strongly centred monopoles of charge $k$, which is the universal
cover of the hyperK\"ahler quotient $\cMo_k$.
The latter space has fundamental group isomorphic to $\ZZ_k$ and thus the deck
transformations of $\unicov{k}$ are given by the subgroup $\ZZ_k
\subset \TT$.

Before proceeding we make two further remarks about the relation
between the compactification $\Mon_{k}$ and the universal cover. The
first observation is that for any compact manifold with corners $M$,
with interior $M^0 = M\setminus \p M$, it is the case that $M$ and
$M^0$ have the same homotopy type.  This can be seen by choosing a
global boundary defining function $\rho$, say.  If
$$
M_\delta = M \setminus \{\rho < \delta\}
$$
it is easy to see that $M_\delta$ is homotopy equivalent to both $M$
and $M^0$.  In particular, the homotopy types of $\Mon_k/\TT$ and
$\cM_k/\TT$ are the same and both have fundamental group $\ZZ_k$.

The next observation is that if $M$ is a compact MWC with finite
fundamental group, then its universal cover $\wt{M}$ is, in a natural
way, again a compact MWC. The proof is an adaptation of the proof that
the universal cover of a smooth manifold is in a natural way again
smooth.

Combining these two points, we see that the compactification $\Mon_k$
of $\cM_k$ yields a compactification of $\unicov{k}$, which we will call
$\cunicov{k}$ in this section, and the IBF and metric properties proved for $\Mon_k$
hold also, simply by lifting, for the compactification $\cunicov{k}$.

If $\cH^i_k$ denotes the space of $L^2$ harmonic forms of degree $i$ on $\unicov{k}$, we can decompose $\cH^i_k$ according to the $\ZZ_k$-action and denote by $\cH^i_{k,\ell}$ the component in $\cH^i_k$ on which $\zeta \in \ZZ_k$ acts by multiplication with $\zeta^\ell$.
Analogously, we write $H^*(U)_\ell$, $H_c^*(U)_\ell$ and $H_2^*(U)_\ell$ for the respective components of the de Rham cohomology, the de Rham cohomology with compact supports and the $L^2$-cohomology of $U \subset \unicov{k}$. (The latter meaning the subcomplex of the de Rham complex consisting of smooth forms $\alpha$ for which both $\alpha$ and $\rd\alpha$ are square-integrable on $U$.) The integer $\ell$ is also referred to as the electric charge.

The Sen Conjecture can be stated as follows (cf.\ \cite{SS,sen1994strong}):
 \begin{enumerate}
  \item[(S.1)] If $k$ and $\ell$ are coprime, then $\cH^{2k-2}_{k,\ell} \cong \CC$ and $\cH^i_{k,\ell} = 0$ for $i \neq 2k-2$, and
  \item[(S.2)] if $k$ and $\ell$ have a common factor, then $\cH^i_{k,\ell} = 0$ for all $i$.
 \end{enumerate}
We will prove (S.1), the coprime case, by adapting an argument in \cite[\S2]{SS} and proving the following.

\begin{thm}\label{sen.1}
Let the integers $k$ and $\ell$ be coprime. Then the space $\cH^{i}_{k,\ell}$ of harmonic forms of degree $i$ and electric charge $\ell$ is canonically isomorphic to
 \begin{equation}\label{sen.image}
  \Im\big( H^i_c(\unicov{k})_\ell \longrightarrow H^i(\unicov{k})_\ell \big) \;.
 \end{equation}
In particular, the coprime case of the Sen Conjecture holds true:
 \begin{equation}\label{eq:sen.2}
  \cH_{k,\ell}^{i} \cong \begin{cases} \CC &\text{if $i = 2k-2$,} \\ 0 &\text{else.} \end{cases}
 \end{equation}
\end{thm}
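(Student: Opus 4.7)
The plan is to mirror the two-part structure of the theorem: first to establish the Hodge-theoretic identification
\[
\cH^i_{k,\ell} \cong \Im\bigl(H^i_c(\unicov{k})_\ell \to H^i(\unicov{k})_\ell\bigr),
\]
and then to compute this image using the explicit combinatorial structure of $\cunicov{k}$ provided by (the lift to $\unicov{k}$ of) Theorem~\ref{cpct.conj}.

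For the first step, I would use the result of Part I that $g_k$ extends to a $\Phi$-metric on the compactification $\cunicov{k}$, together with the adapted cover $\{V_0,V_\lambda\}$ of Proposition~\ref{prop.mrs.bounded.cover}, whose subordinate partition of unity $\chi_\lambda$ has uniformly $g_k$-bounded gradients. This boundedness allows a Mayer-Vietoris argument directly inside the smooth-$L^2$-de Rham complex on $\unicov{k}$, with bounded coboundary maps, so that the spectral sequence converges to smooth-$L^2$ cohomology. Combined with the standard identification of $\cH^i_k$ with reduced $L^2$-cohomology on complete Riemannian manifolds, this reduces the problem to two sub-tasks: first, computing the reduced $L^2$-cohomology of each piece $V_\lambda$, where the explicit $\Phi$-model metric gives the necessary control by analogy with the fibred-boundary and QAC settings; second, identifying the assembled Mayer-Vietoris output with $\Im(H^*_c \to H^*)$. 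Because the $\TT$-action is isometric on $\cunicov{k}$, the whole argument is compatible with the decomposition into electric-charge components, yielding the isomorphism on the $\ell$-isotypic piece.

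For the second step, I would follow the topological argument sketched in \cite[Sect.~3]{SS}, now rigorously implementable because the boundary strata of $\cunicov{k}$ are explicit. Apply Mayer-Vietoris to the same cover, this time to compute $H^*(\unicov{k})_\ell$ and $H^*_c(\unicov{k})_\ell$. The contribution from the stratum indexed by a partition $\lambda$ of $\bk$ with block sizes $k_1,\ldots,k_n$ involves the generalized Gibbons-Manton torus bundle $\cT_{\lambda k}$ of rank $\row(\lambda) = n$, on which the diagonal $\TT$-action embeds via $\TT \hookrightarrow \TT^n$ with weights $k_1,\ldots,k_n$. The $\ell$-isotypic component of the cohomology of such a bundle is thus governed by integer solutions to a linear congruence in the $k_j$ modulo $k$. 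Coprimality $\gcd(k,\ell)=1$ forces this contribution to vanish unless $\lambda$ is the finest partition $(1,\ldots,1)$, and the surviving stratum contributes a single generator concentrated in degree $2k-2$, matching \eqref{eq:sen.2}.

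The hardest step, I expect, will be the analytic one: verifying that the smooth-$L^2$-de Rham complex on a $\Phi$-manifold in the sense of Part I really does compute $\Im(H^*_c \to H^*)$, and that the Mayer-Vietoris from Proposition~\ref{prop.mrs.bounded.cover} realizes this identification. While the analogous statement is classical for scattering and fibred-boundary metrics, the iterated-fibration setting demands a careful corner-by-corner analysis controlling the interplay between fibre- and base-direction $L^2$-cohomology at each codimension-$d$ face of $\cunicov{k}$; the pieces of the cover involve products with Gibbons-Manton circle bundles over configuration spaces, and one must check that the reduced $L^2$-cohomology of these pieces matches the image of compactly supported in ordinary cohomology before the Mayer-Vietoris comparison can be made exact. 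Once this is in place, the topological computation in the coprime case is essentially a weight-bookkeeping exercise along the IBF structure, though it still requires vigilance at deeper corners to ensure no unintended classes survive the coprimality cancellation.
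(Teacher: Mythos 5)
Your proposal misses the key ``soft'' step that the paper actually uses, and instead substitutes a considerably harder analytic program. The paper's proof of surjectivity of the projection $H^*_c(\unicov{k})_\ell \to \cH^*_{k,\ell}$ does \emph{not} compute the reduced $L^2$-cohomology of each piece $V_\lambda$ by analyzing the $\Phi$-model metric; it shows instead that $H^*_2(U)_\ell = 0$ for any $G_{\lambda_i}$-stable open $U$ in a boundary piece, purely because the torus $G_{\lambda_i}$ acts by near-isometries with bounded orbits and the deck transformation $A_\zeta^d$ (with $d = \gcd(k_1,\ldots,k_n) < k$) lies in its identity component: this gives $(A_\zeta^d)^*\alpha - \alpha = \rd\beta$ with $\beta$ square-integrable, and coprimality forces $\zeta^{\ell d} \neq 1$, so $\alpha$ is exact. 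This is the adaptation of the Segal--Selby trick, and it is exactly what the Introduction flags as making the coprime case tractable without detailed spectral analysis of $\rd + \rd^*$. Once each piece has vanishing $L^2$-cohomology in charge $\ell$, Mayer--Vietoris with the bounded partition of unity gives $H^*_2(\wt V)_\ell = 0$, from which surjectivity of $\sigma$ follows in a few lines, and \eqref{sen.image} then follows from the standard factorization $H^*_c \to \cH^* \to H^*$ plus injectivity of $\tau$ on a complete manifold.

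Two further points. First, what you call ``the hardest step'' --- establishing an $L^2$-Hodge theorem of the form $\cH^i \cong \Im(H^i_c \to H^i)$ by a corner-by-corner QFB analysis --- is precisely what the authors say is \emph{not} needed here, and would be a substantial separate project (it is the kind of analysis they anticipate needing for the non-coprime cases). The coprimality hypothesis is not used in your Step~1 at all, which is a signal that something is off: in the paper's argument, coprimality is used already at the level of killing $L^2$-cohomology on the boundary pieces. Second, your Step~2 (recomputing $H^*(\unicov{k})_\ell$ and $H^*_c(\unicov{k})_\ell$ by Mayer--Vietoris over the boundary strata, with a ``weight-bookkeeping'' over Gibbons--Manton bundles) is not what the paper does --- it simply cites Segal--Selby, who computed these groups via the rational-map model of $\unicov{k}$ and Poincar\'e duality. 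Your stratum-by-stratum claim that only the finest partition contributes in degree $2k-2$ is not obviously correct and in any case is not how the identification with $\CC$ in middle degree arises: that class is a genuine interior class, not one localized at a boundary face. In short, the conclusion and the overall frame (prove \eqref{sen.image}, then invoke Segal--Selby for the computation) are right, but your route to \eqref{sen.image} replaces a one-page bounded-orbit vanishing argument with an open analytic problem, and your route to \eqref{eq:sen.2} is both unnecessary and unverified.
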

Part of the proof will be showing the existence of a finite open cover $\{V_i\}$ of the compactification $\cunicov{k}$ of $\unicov{k}$ with the following properties:
 \begin{enumerate}
  \item $V_0$ is relatively compact
  \item For each $i>0$ there is a proper partition $\lambda_i$ of $\bf k$ so that $V_i$ can be identified with an open set of ordered clusters of monopoles of type $\lambda_i$
  \item $\TT^{\row(\lambda_i)}$ acts on $V_i$, extending the action of $\TT$ on $V_i \subset \cM_k$; this action is by near isometries and its orbits are of bounded size
  \item There is a partition of unity $\{\chi_i\}$ subordinate to the cover $\{V_i\}$ so that each $|\rd \chi_i|$ is bounded
 \end{enumerate}
Then, we will proceed as in \cite{SS} and use the $\TT^{\row(\lambda_i)}$-action on restrictions of the $V_i$ to $\unicov{k}$ in order to reduce $H_2^i(\unicov{k})_\ell$ to $H_c^i(\unicov{k})_\ell$. This will lead to a proof of Theorem \ref{sen.1}.

\begin{lem}\label{sen:cover} There exists a finite open cover $\{V_i\}$ of $\cunicov{k}$ satisfying conditions \emph{(1) -- (4)}.
\end{lem}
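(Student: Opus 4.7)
The plan is to assemble the desired cover by specializing the adapted covers constructed in Propositions~\ref{thm:ibf.cover} and \ref{prop.mrs.bounded.cover} to $\cunicov{k}$, which inherits the IBF structure and $\Phi$-metric from $\Mon_k$ through the finite universal covering.

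First, I would apply Proposition~\ref{thm:ibf.cover} to $\cunicov{k}$ to obtain an open cover $\{W_\lambda\}$ of a neighbourhood of $\partial \cunicov{k}$ indexed by the boundary hypersurfaces of $\cunicov{k}$. By construction, the boundary fibration $\phi_\lambda$ restricts to a surjection $W_\lambda\cap N_\lambda\to B_\lambda$, each fibre meets $W_\lambda$ in a relatively compact subset of its interior, and $W_\lambda\cap W_\mu=\emptyset$ whenever $\lambda$ and $\mu$ are incomparable. Letting $V_0$ be a relatively compact open set containing $\cunicov{k}\setminus\bigcup_\lambda W_\lambda'$ for a suitable inward shrinking $W_\lambda'\subset W_\lambda$, and setting $V_i=W_{\lambda_i}$ for $i>0$, establishes (1). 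For (2), I would shrink each $V_i$ further if necessary so that the local $\Sym_{\lambda_i}$-action is free on it, permitting the identification of $V_i$ with an open subset of the ordered model $\Mon_{\lambda_i}\times_{\TT^{\row(\lambda_i)}}\cT_{\lambda_i k}$ attached near $\cI_{\lambda_i}$ by Theorem~\ref{cpct.conj}.

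For condition~(3), the $\TT^{\row(\lambda_i)}$-action on $\cI_{\lambda_i}$ furnished by Theorem~\ref{cpct.conj}(3) needs to be extended to the collar $V_i$. I would do this using the compatible boundary product structure of Definition~\ref{dfn:bps}: the flow of $v_{\lambda_i}$ identifies a neighbourhood of $N_{\lambda_i}$ with a product $[0,\delta)\times(N_{\lambda_i}\cap V_i)$, and one transports the boundary torus action across this product by declaring it trivial in the normal direction. The extended action then restricts on the interior to the lifted $\TT$-action (viewed as the diagonal subgroup $\TT\subset\TT^{\row(\lambda_i)}$). That the orbits are of bounded length and the action is by near-isometries both follow from Theorem~\ref{main.theorem}: the infinitesimal generators are vertical $\Phi$-vector fields tangent to the fibres of $\phi_{\lambda_i}$, hence have uniformly bounded length with respect to any $\Phi$-metric.

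Finally, for condition~(4) I would invoke Proposition~\ref{prop.mrs.bounded.cover} applied to $\cunicov{k}$ with its lifted $\Phi$-metric; this yields a partition of unity $\{\chi_i\}$ subordinate to a cover of precisely the type above with $|\rd\chi_i|_{g_k}$ uniformly bounded, and as noted in its proof the underlying cover can be chosen to coincide with the one from Proposition~\ref{thm:ibf.cover}. The main obstacle in this programme is condition~(3): carefully verifying that the fibrewise torus action extends smoothly to an open neighbourhood of each $N_{\lambda_i}$ in a manner compatible with the ambient $\Phi$-metric up to controlled error ultimately rests on the Gibbons--Manton-type asymptotic model for the metric described in \S\ref{S:model_metrics}, whose full justification is deferred to Part~II; the other three conditions follow essentially formally from the structural results already proved.
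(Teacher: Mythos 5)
Your overall architecture is the same as the paper's: build the boundary part of the cover from an adapted cover in the sense of Proposition~\ref{thm:ibf.cover}, add a relatively compact $V_0$ for the interior, obtain the $\TT^{\row(\lambda_i)}$-action from the identification with ordered cluster data, and appeal to Proposition~\ref{prop.mrs.bounded.cover} for the bounded partition of unity. However, the way you handle condition (2) contains a real gap, and it is the crux of this lemma.

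You write that you would ``shrink each $V_i$ further if necessary so that the local $\Sym_{\lambda_i}$-action is free on it''. This misses the actual difficulty in two ways. First, freeness of the $\Sigma_k$-action near the free boundary is automatic (see \S\ref{sec.sym.mbs}); what matters is whether the resulting $\Sym_\lambda$-\emph{cover} $\pi_\lambda : B_{\lambda k} \to B'_{\lambda k}$ trivializes over $\phi_\lambda(V_i)$, since only then can $V_i$ be lifted to the ordered model $\Mon_\lambda \times_{\TT^{\row(\lambda)}} \cT_{\lambda k}$. Second, and more seriously, Proposition~\ref{thm:ibf.cover} gives you exactly one set $W_\lambda$ per boundary hypersurface, with $\phi_\lambda|_{W_\lambda \cap N_\lambda}$ \emph{surjective} onto $B'_{\lambda k}$. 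Since the cover $B_{\lambda k} \to B'_{\lambda k}$ is nontrivial whenever $\Sym_\lambda \neq \{1\}$, no lift exists over all of $B'_{\lambda k}$. ``Shrinking'' $W_\lambda$ to a set that does admit a lift would destroy the surjectivity, and then $\{V_i\}$ would no longer cover $\partial\cunicov{k}$. The fix, as in the paper's proof, is to \emph{replace} each $W_\lambda$ by several sets: first cover the compact base $B'_{\lambda k}$ by finitely many connected sets $\wt\cO_{j,\lambda}$ over each of which $\pi_\lambda$ trivializes (i.e.\ $\pi_\lambda^{-1}(\wt\cO_{j,\lambda})$ is $|\Sym_\lambda|$ disjoint copies), choose one lift $\cO_{j,\lambda}$ for each $j$, then cover $N_{\lambda k}$ by sets $\cU_{i,\lambda}$ fine enough that each $\phi_\lambda(\cU_{i,\lambda})$ lands inside a single $\pi_\lambda(\cO_{j,\lambda})$, and finally push these into the interior with the retraction $v_\lambda$. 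This produces \emph{several} $V_i$'s over each boundary hypersurface, and that multiplicity is what makes coverage and the lifting compatible.

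Your remarks on conditions (3) and (4) are reasonable, though for (3) the paper does not need to transport the torus action ``trivially in the normal direction'': once $V_i$ is identified with an open set of ordered clusters, the $\TT^{\row(\lambda_i)}$-action is the one coming from rotating each cluster component independently, which is already defined on the cluster region and not merely on the boundary face; the boundedness of the orbits and the near-isometry property then come from the $\Phi$-metric structure as you indicate. Finally, the paper constructs the cover downstairs on $\Mon_k$ and lifts it to $\cunicov{k}$ (refining so that each $\wh\pi^{-1}(\cU_{i,\lambda})$ breaks into $k$ disjoint copies), whereas you work directly on $\cunicov{k}$; either order is acceptable, but the trivialization-of-covers step must appear in some form, and it is absent from your argument.
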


\begin{proof} We will first show the existence of a cover of $\Mon_k$ satisfying (1)--(3), then lift this cover to $\cunicov{k}$ and use Proposition \ref{prop.mrs.bounded.cover} to obtain (4).

By Theorem \ref{cpct.conj}, for each boundary hypersurface $N$ of $\Mon_k$ we have a $\Stab_{\Sigma_k}(D_k)$-orbit (i.e.\ a type) $[\lambda]$ of a partition $\lambda$ of $k$ and a fibration $\phi_\lambda : N = N_{\lambda k} \longrightarrow B^\prime_{\lambda k}$, where $B^\prime_{\lambda k}$ is the ideal or free boundary of the space of unordered configurations of type $[\lambda]$.
$B^\prime_{\lambda k}$ is the quotient $B_{\lambda k}/\Sym_{\lambda k}^0$ of the ideal boundary of the space of ordered configurations associated to $\lambda$, by the action of the symmetry group $\Sym_{\lambda k}^0 = \Sym_\lambda$, cf.\ \eqref{def.sigma.lambda}. Let $\pi_\lambda$ denote the canonic quotient map.
Since $B^\prime_{\lambda k}$ is compact, we can choose an open cover by a finite number of connected open sets, say $\wt\cO_{j,\lambda}$, with the property that $\pi_\lambda^{-1}(\wt\cO_{j,\lambda}) \subset B_{\lambda k}$ is diffeomorphic to the union of $|\Sym_{\lambda}|$ disjoint copies of $\wt\cO_{j,\lambda}$. For each $j$, we pick one of these lifts and denote it by $\cO_{j,\lambda}$.

Furthermore, since $N_{\lambda k}$ is compact, we can pick a finite open cover $\{\cU_{i,\lambda}\}$ of $N_{\lambda k}$ and, as the $\pi_\lambda(\cO_{j,\lambda})$ cover $B^\prime_{\lambda k}$, refine this cover in such a way that each $\phi_\lambda(\cU_{i,\lambda})$ is contained in one $\pi_\lambda(\cO_{j,\lambda})$. Then, the $\cU_{i,\lambda}$ constitute a finite open cover of $N_{\lambda k}$ and each of the elements of this cover can be identified with an open set of ordered ideal configurations associated to the partition $\lambda$.

Now choose a boundary product structure compatible to the IBF of
$\Mon_k$ as in Definition \ref{dfn:bps}. Using the retraction
$v_\lambda$ (cf.\ the paragraphs after Definition \ref{dfn:bps}), we may push
out the sets $\cU_{i,\lambda}$ to obtain a cover of a product neighbourhood
of $N_{\lambda k}$. Doing this for all proper types $[\lambda]$ of $k$, we obtain a cover of a product neighbourhood of $\partial \Mon_k$. Letting $\cU_0$ be an open neighbourhood of
 \[ \Mon_k \setminus \bigcup_{i,\lambda} \overline{\cU_{i,\lambda}} \;, \]
we obtain a cover of $\Mon_k$.

If $\wh \pi : \cunicov{k} \longrightarrow \unicov{k}$ denotes the quotient map, we may refine the cover $\{\cU_{i,\lambda}\}$ of $\Mon_k$ so that for each element of it, ${\wh \pi}^{-1}(\cU_{i,\lambda})$ consists of $k$ disjoint open sets. These sets form a cover $\{\wh \cU_{i,\lambda}\}$ of $\cunicov{k}$. Now let $\{W_i\}$ be a cover for $\cunicov{k}$ obtained by Proposition \ref{prop.mrs.bounded.cover} and take $\{V_i\}$ to be a common refinement of $\{W_i\}$ and $\{\wh \cU_{i,\lambda}\}$.

Conditions (1) and (2) are clearly satisfied by construction, as is condition (3): Each $V_i$ is identifiable with an open set of ordered clusters of monopoles of type $\lambda_i$, in particular $\TT^{\row(\lambda_i)}$ acts freely on $V_i$ extending the action of $\TT$ given on $\cM_k$. Moreover, $\TT^{\row(\lambda_i)}$ acts fibre-wise by isometries and, due to the form of the metric (cf.\ Proposition \ref{prop.mrs.bounded.cover} and Theorem \ref{cpct.conj}), acts by isometries with bounded orbits on $V_i$. Lastly, condition (4) follows since the cover $\{V_i\}$ is a refinement of the cover from Proposition \ref{prop.mrs.bounded.cover}.
\end{proof}

Now consider sets $\wt V_i = \big(V_i \setminus \partial\cunicov{k}\big) \subset \unicov{k}$. If the type corresponding to $V_i$ (downstairs in $\Mon_k$) is $a_i = [\lambda_i] = (k_1, \dotsc, k_n)$, then
 \[ G_{\lambda_i} = \big\{\, (\zeta_1,\dotsc,\zeta_n) \in \TT^n \,\big|\, \prod \zeta_j^{k_j} = 1 \,\big\} \]
acts on $\wt V_i$. The diagonal subgroup of $G_{\lambda_i}$ is isomorphic to $\ZZ_k$ and acts by deck transformations on $\wt V_i$ (cf.\ \cite[p. 779]{SS}).

\begin{lem} If $U$ is a $G_{\lambda_i}$-stable open submanifold of $\wt V_i$, where $i>0$, and $k$ and $\ell$ are coprime, then $H_2^*(U)_{\ell} = 0$.
\end{lem}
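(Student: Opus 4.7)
The plan is to adapt the Cartan homotopy argument of Segal--Selby \cite[\S2]{SS}, exploiting the isometric $G_{\lambda_i}$-action on $U$ with bounded orbits to trivialise, character-by-character, the $\ell$-isotypic piece of the smooth $L^2$ de Rham complex of $U$.

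First, I would decompose forms under $G_{\lambda_i}$. Since $G_{\lambda_i}$ is a compact abelian Lie group acting smoothly on $U$ by isometries, the Peter--Weyl averaging projectors $P_\chi f = \int_{G_{\lambda_i}} \chi(g)^{-1} g^* f \, dg$ have $L^2$-norm $\leq 1$, commute with $\rd$ (pullback does), and preserve smoothness, so they decompose any smooth $L^2$-form $\alpha$ with $\rd\alpha \in L^2$ orthogonally into smooth $L^2$ components $\alpha_\chi$ with $\rd\alpha_\chi \in L^2$, indexed by $\chi \in \widehat{G_{\lambda_i}}$. Via the inclusion $G_{\lambda_i} \hookrightarrow \TT^n$ one identifies $\widehat{G_{\lambda_i}} \cong \ZZ^n/\ZZ\cdot(k_1,\ldots,k_n)$, and the class $\chi = [(m_1,\ldots,m_n)]$ restricts to $\zeta \mapsto \zeta^{\sum m_j}$ on the diagonal $\ZZ_k \subset G_{\lambda_i}$. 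Hence $H_2^*(U)_\ell$ is the direct sum of the contributions of those $\chi$ with $\sum m_j \equiv \ell \pmod k$.

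Next I would use coprimality to rule out the trivial character. A class is trivial in $\ZZ^n/\ZZ\cdot(k_1,\ldots,k_n)$ iff $(m_1,\ldots,m_n) = c(k_1,\ldots,k_n)$ for some $c \in \ZZ$, in which case $\sum m_j = ck \equiv 0 \pmod k$; since $\gcd(k,\ell) = 1$ and $1 \leq \ell \leq k-1$ (the case $k = 1$ being vacuous), the trivial character does not contribute. Because $\lambda_i$ is proper ($n \geq 2$, as $i > 0$), $G_{\lambda_i}$ is a torus of positive dimension $n - 1$, and every nontrivial character $\chi$ has nonzero derivative $\chi' \in \Lie(G_{\lambda_i})^*$. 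Pick $X \in \Lie(G_{\lambda_i})$ with $\chi'(X) \neq 0$ and let $\hat X$ denote the generating vector field on $U$.

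Finally, Cartan's formula provides the homotopy. By condition (3) of Lemma \ref{sen:cover}, the $\TT^{\row(\lambda_i)}$-action on $V_i$ is isometric with orbits of uniformly bounded length, so $\hat X$ is a Killing field of uniformly bounded pointwise norm on $U$. Therefore both $\iota_{\hat X}$ and $\cL_{\hat X}$ map the smooth $L^2$ complex into itself, and on the $\chi$-isotypic summand the Cartan identity $\cL_{\hat X} = \rd\,\iota_{\hat X} + \iota_{\hat X}\,\rd$ becomes multiplication by the nonzero scalar $i\,\chi'(X)$. For any closed $\alpha_\chi$ in the smooth $L^2$ complex the form $\beta := (i\chi'(X))^{-1}\iota_{\hat X}\alpha_\chi$ is a smooth $L^2$ primitive with $\rd\beta = \alpha_\chi$, so $[\alpha_\chi] = 0$ in $H_2^*(U)$; summing over the relevant $\chi$ yields $H_2^*(U)_\ell = 0$. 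The main point requiring care is the first step---verifying that the character projections preserve the smooth $L^2$ complex and commute with $\rd$, with $\alpha = \sum_\chi \alpha_\chi$ converging orthogonally in $L^2$ (and likewise for $\rd\alpha$)---which follows from the smooth isometric action of the compact group $G_{\lambda_i}$.
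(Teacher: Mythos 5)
Your Peter--Weyl/Cartan-homotopy strategy is in the same spirit as the paper's adaptation of Segal--Selby, but there is a genuine gap in the step where you claim that ``$G_{\lambda_i}$ is a torus of positive dimension $n-1$, and every nontrivial character $\chi$ has nonzero derivative.'' This is false whenever $d := \gcd(k_1,\ldots,k_n) > 1$: in that case
\[
G_{\lambda_i} \;=\; \ker\bigl(\TT^n \xrightarrow{\ \prod \zeta_j^{k_j}\ } \TT\bigr)\;\cong\; \TT^{n-1}\times \ZZ_d,
\]
so $G_{\lambda_i}$ has $d$ connected components and there are $d-1$ nontrivial \emph{torsion} characters which factor through $\pi_0(G_{\lambda_i})\cong \ZZ_d$ and therefore have identically zero derivative. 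For those characters there is no $X$ with $\chi'(X)\neq 0$, Cartan's identity gives $\cL_{\hat X}=0$ rather than a nonzero multiple of the identity, and your contraction $\beta = (i\chi'(X))^{-1}\iota_{\hat X}\alpha_\chi$ is undefined. (Partitions such as $(2,2)$ of $k=4$ already exhibit $d>1$.) You handled only the trivial character, but not the remaining torsion characters.

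The gap is patchable, and the patch uses the coprimality hypothesis in a stronger way than you did. A torsion character $\chi$ restricts on the diagonal $\ZZ_k \subset G_{\lambda_i}$ to a character of order dividing $\gcd(k,d)=d$, since $d\zeta \cdot (1,\ldots,1)$ lies in the identity component. On the other hand, the requirement that $\chi$ contribute to the $\ell$-isotypic piece means $\chi|_{\ZZ_k}=(\zeta\mapsto \zeta^\ell)$, which has order $k/\gcd(k,\ell)=k$. Since $d \mid k_j$ for each $j$ and $k_j < k$ because the partition is proper, one has $d<k$, a contradiction, so no torsion character contributes. Once this is added your argument goes through. Note that the paper takes a slightly different and shorter route that avoids Peter--Weyl altogether: it passes directly to the $d$-th power $A_\zeta^d$ of the deck transformation, which lies in the identity component of $G_{\lambda_i}$ and is therefore linked to the identity by a bounded homotopy; the eigenvalue relation $(A_\zeta^d)^*\alpha = \zeta^{\ell d}\alpha$ together with $\zeta^{\ell d}\neq 1$ (equivalent, by the coprimality, to $d<k$) then immediately exhibits $\alpha$ as exact.
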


\begin{proof} The argument is an extension of the proof of \cite[Lemma 3.1]{SS}. If $d$ is the greatest common divisor of the numbers $k_1,\ldots, k_n$ in $a_i$, then the vector $(k_1,\ldots,k_n)$ is $d$ times a primitive vector in $\ZZ^n$. Thus we can find $(n-1)$ vectors which, along with $(k_1,\ldots,k_n)$ span a lattice of index $d$ in $\ZZ^n$. Thus $G_{\lambda_i} = \TT^{n-1} \times \ZZ_{d}$.
If the action of $(\zeta,\ldots,\zeta)$ on $\unicov{k}$ is denoted by $A_\zeta$, then it follows that the diffeomorphism $A_\zeta^d$ is in the identity component of $G_{\lambda_i}$ and hence linked to it by a homotopy generated by a bounded vector field.
Thus, if $\alpha$ represents an element of $H_2^*(U)_\ell$, we have $(A_\zeta^d)^*\alpha - \alpha = \rd\beta$ and from $(A_\zeta^d)^*\alpha = \zeta^{\ell d} \alpha$ and $\zeta^{\ell d} \neq 1$, we obtain $\alpha = \rd\big( (\zeta^{\ell d}-1)^{-1} \beta \big)$.
\end{proof}

\begin{lem} If $k$ and $\ell$ are coprime, the map $H_c^*(\unicov{k})_\ell \longrightarrow \cH^*_{k,\ell}$ given by orthogonal projection is onto.
\end{lem}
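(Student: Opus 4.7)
The plan is to combine the vanishing from the preceding lemma with a Mayer-Vietoris argument on the smooth $L^2$-de Rham complex to produce, for every $\alpha \in \cH^i_{k,\ell}$, a closed compactly supported form of charge $\ell$ that is $L^2$-cohomologous to $\alpha$; its orthogonal projection onto $\cH^*$ will then automatically equal $\alpha$.

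First I will establish that $H_2^*(U_+)_\ell = 0$, where $U_+ = \bigcup_{i>0}\wt V_i$ is the (as arranged) $\ZZ_k$-stable neighbourhood of infinity. For this I iterate the preceding lemma through a Mayer-Vietoris spectral sequence for the cover $\{\wt V_i\}_{i>0}$ of $U_+$. Each $\wt V_i$ can be taken to be $G_{\lambda_i}$-stable (enlarging by the group action if necessary), so the preceding lemma gives $H_2^*(\wt V_i)_\ell = 0$. Similarly, each multiple intersection $\wt V_{i_1} \cap \cdots \cap \wt V_{i_r}$ inherits an action of $G_{\lambda}$ for the finest partition $\lambda$ among $\lambda_{i_1},\ldots,\lambda_{i_r}$ (reflecting the IBF cluster structure), so the same vanishing applies. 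The $L^2$-Mayer-Vietoris sequence is exact here because the partition of unity $\{\chi_i\}$ has uniformly bounded differentials by Proposition~\ref{prop.mrs.bounded.cover}, so multiplication by $\chi_i$ preserves the $L^2$-de Rham complex.

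Second, given $\alpha \in \cH^i_{k,\ell}$, the restriction $\alpha|_{U_+}$ is closed and lies in the $L^2$-de Rham complex of $U_+$. By the first step there exists $\beta \in \Omega^{i-1}(U_+)$ with $\beta, d\beta \in L^2(U_+)$ and $d\beta = \alpha|_{U_+}$; averaging over the $\ZZ_k$-action on $U_+$ against the character $\zeta \mapsto \zeta^\ell$ arranges $\beta$ to have charge $\ell$. Choose a smooth cutoff $\psi$ on $\unicov{k}$ with $\psi \equiv 0$ on a neighbourhood of $\unicov{k}\setminus U_+$ and $\psi \equiv 1$ outside a relatively compact subset $K$ containing $\wt V_0$. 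Then $\psi \beta$, extended by zero outside $U_+$, is a globally defined smooth $L^2$-form on $\unicov{k}$, and I set
\[
  \omega \;:=\; \alpha - d(\psi \beta) \;=\; (1 - \psi)\alpha - d\psi \wedge \beta .
\]
Since $(1-\psi)$ and $d\psi$ both have compact support inside $K$, $\omega$ is compactly supported; $\omega$ is closed because $d\alpha = 0$ and $d\beta = \alpha$ on the support of $d\psi$; and $\omega$ has charge $\ell$. Because $\alpha - \omega = d(\psi\beta)$ with $\psi\beta \in L^2$, the difference lies in $\Im d$, which is $L^2$-orthogonal to $\cH^*$; hence the orthogonal projection of $\omega$ onto $\cH^*_{k,\ell}$ equals $\alpha$, giving the desired preimage under $H_c^i(\unicov{k})_\ell \to \cH^i_{k,\ell}$.

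The main technical hurdle is the first step, specifically the bookkeeping needed to ensure that every multiple intersection in the cover is stable under a group for which the preceding lemma applies, and that the resulting Mayer-Vietoris sequence is exact in \emph{unreduced} $L^2$-cohomology rather than merely the reduced version. Once that is in place, the second step is a routine cutoff construction and the final identification of projections follows directly from the $L^2$-Hodge decomposition.
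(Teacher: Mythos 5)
Your proof is correct and takes essentially the same route as the paper: vanish $H_2^*$ of the neighbourhood of infinity $\wt V = \bigcup_{i>0}\wt V_i$ by Mayer--Vietoris (using the preceding lemma and the bounded partition of unity), then subtract $d$ of a globally defined extension of the resulting primitive to obtain a compactly supported representative that projects to $\alpha$. The only difference is cosmetic: the paper writes ``any smooth extension $\tilde\gamma$ of $\gamma$'' where you build the extension explicitly via a cutoff $\psi\beta$ and spell out the averaging to charge $\ell$; both land in the same place.
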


\begin{proof} Let $\{V_i\}$ denote the cover from Lemma \ref{sen:cover} and again write $\wt V_i = V_i \setminus \partial \cunicov{k}$. Then, $\wt V = \bigcup_{i>0} \wt V_i$ is a finite union and by condition (4), there is a smooth partition of unity $\{\chi_i\}$ subordinate to the cover $\{\wt V_i\}$ such that the differentials $\big|\rd \chi_i\big|$ are all bounded.
Hence we may use a Mayer-Vietoris argument to compute $H_2^*(\wt V)_\ell$ from the $H_2^*(\wt V_i)_\ell$: Since any intersection $\wt U$ of sets $\wt V_i$, $i > 0$, is $G_\lambda$-stable for some $\lambda = \lambda_i$, we have $H_2^*(\wt U)_\ell = 0$ and consequently obtain $H_2^*(\wt V)_\ell = 0$ by iteration of the standard Mayer-Vietoris argument.

Now let $0 \neq \alpha \in \cH^*_{k,\ell}$. As $\big(\unicov{k},g\big)$ is complete, $\alpha$ defines a non-zero element in $H_2^*(\unicov{k})_\ell$. But then, $\alpha|_{\wt V} = \rd \gamma$ for some smooth and square-integrable form $\gamma$ on $\wt V$, since $H_2^*(\wt V)_\ell = 0$. Using any smooth extension $\tilde \gamma$ of $\gamma$ to $\unicov{k}$, we see that $\beta = \alpha - \rd \tilde\gamma$ is compactly supported and closed, thus defines an element in $H_c^*(\unicov{k})_\ell$. Its projection onto $\cH^*_{k,\ell}$ is precisely $\alpha$.
\end{proof}
With these preparations at hand, we can address the proof of Theorem \ref{sen.1}.
\begin{proof}[Proof of Thm. \ref{sen.1}]
The obvious map $H_c^*(\unicov{k}) \longrightarrow H^*(\unicov{k})$, induced by inclusion $\Omega_c^* \hookrightarrow \Omega^*$, factors through $\cH^*_k$ as is shown in \cite[1.4]{SS} for instance. Thus we have maps
 \[ H_c^*(\unicov{k})_\ell \xrightarrow{\;\;\sigma\;\;} \cH^*_{k,\ell} \xrightarrow{\;\;\tau\;\;} H^*(\unicov{k})_\ell \;. \]
We have just shown $\sigma$ to be surjective, and $\tau$ is injective since $\big(\unicov{k},g\big)$ is complete and because $H_2^*(\unicov{k})_\ell$ is a subcomplex of $H^*(\unicov{k})_\ell$. This is to say that $\cH^*_k$ is canonically isomorphic to $\Im (\tau \circ \sigma)$, i.e., to \eqref{sen.image}. Using results of \cite{SS}, this implies the coprime case of the Sen Conjecture, (S.1): There, it is shown that \eqref{eq:sen.2} holds for $H^*(\unicov{k})_\ell$ and $H_c^*(\unicov{k})_\ell$ and that Poincar\'e-duality gives $H_c^{2k-2}(\unicov{k})_\ell \cong H^{2k-2}(\unicov{k})_\ell$. But then,
 \[ \cH_{k,\ell}^i \cong \Im \Big( H_c^i(\unicov{k})_\ell \longrightarrow H^i(\unicov{k})_\ell\Big)
     \cong \begin{cases} \CC &\text{if $i = 2k-2$,} \\ 0 &\text{else.} \end{cases} \qedhere \]
\end{proof}

\newpage

\bibliographystyle{acm}
\bibliography{mcasc_fin}

\end{document}